\CompileMatrices\SelectTips{cm}{12}
\theoremstyle{plain}
\newtheorem{Thm}{\sc Theorem}[section]
\newtheorem{Theorem}[Thm]{\sc Theorem}
\newtheorem{Corollary}[Thm]{\sc Corollary}
\newtheorem*{Corollary*}{\sc Corollary}
\newtheorem{Proposition}[Thm]{\sc Proposition}
\newtheorem*{Proposition*}{\sc Proposition}
\newtheorem{Lemma}[Thm]{\sc Lemma}
\newtheorem{Conjecture}[Thm]{\sc Conjecture}
\newtheorem{Question}[Thm]{\sc Question}
\theoremstyle{definition}
\theoremstyle{remark}
\newtheorem{Remark}[Thm]{Remark}
\newtheorem{Remarks}[Thm]{Remarks}
\newtheorem{Example}[Thm]{Example}
\newtheorem*{Example*}{Example}
\newtheorem*{Remark*}{Remark}
\newcommand{\cX}{{\cal X}}
\newcommand{\Tr}{\mathop{\rm Tr}}
\newcommand{\id}{{\mathop{\rm id}}}
\newcommand{\ZZ}{{\mathbb Z}}
\newcommand{\FF}{{\mathbb F}}
\newcommand{\Spec}{\mathop{\rm Spec \, }}
\newcommand{\cO}{{\mathcal O}}
\newcommand{\cA}{{\cal A}}
\newcommand{\cE}{{\cal E}}
\newcommand{\cF}{{\cal F}}
\renewcommand{\AA}{{\mathbb A}}
\newcommand{\HH}{{\mathbb H}}
\newcommand{\NN}{{\mathbb N}}
\newcommand{\PP}{{\mathbb P}}
\newcommand{\QQ}{{\mathbb Q}}
\newcommand{\RR}{{\mathbb R}}
\newcommand{\CC}{{\mathbb C}}
\newcommand{\cL}{{\cal L}}
\newcommand{\ti}{\tilde}
\newcommand{\Hom}{{\mathop{{\rm Hom}}}}
\newcommand{\End}{{\mathop{ End}\,}}
\newcommand{\Ext}{{\mathop{{\rm E}xt}}}
\newcommand{\GL}{\mathop{\rm GL}}
\newcommand{\rk}{\mathop{{rk}}}
\newcommand{\NE}{\mathop{\rm NE}}
\mathchardef\mhyp="2D
\begin{document}

\title{On positivity and semistability of vector bundles in finite and mixed characteristics}
\author{Adrian Langer}
\date{\today}

\maketitle


{{\sc Address:}\\
Institute of Mathematics, Warsaw University, ul.\ Banacha 2,
02-097 Warszawa, Poland\\}

\bigskip
\emph{Dedicated to Professor C. S. Seshadri on his 20th birthday}
\footnote{In this rare case the number of years does not coincide
with the number of birthdays.}

\begin{abstract}
We survey results concerning behavior of positivity of line
bundles and possible vanishing theorems in positive
characteristic. We also try to describe variation of positivity in
mixed characteristic. These problems are very much related to
behavior of strong semistability of vector bundles, which is
another main topic of the paper.
\end{abstract}

\section*{Introduction}

The main aim of this paper is to survey problems concerning
positivity of line bundles and stability of vector bundles on
schemes defined over finite fields or over finitely generated
rings over $\ZZ$. Note that these two topics are very much related
because a degree zero vector bundle $E$ on a curve is strongly
semistable if and only if the line bundle $\cO_{\PP (E)}(1)$ on
the projectivization of $E$ is nef (see, e.g., \cite[Proposition
7.1]{Mo}).

The motivating problems are the following:

\begin{itemize}
\item What can we say about relation between nefness,
semiampleness, effectivity and pseudoeffectivity for line bundles on
varieties defined over finite fields?
\item What vanishing theorems can hold for suitably positive line bundles
in positive characteristic (or over $\bar\FF _p$)?
\item Is there any relation between nefness in characteristic
zero and in positive characteristic?
\item What can we say about variation in families of positivity of line bundles
and semistability of vector bundles?
\end{itemize}

The known results do not answer any of these questions. In this
paper we pose and study some conjectures that try to answer all of
the above questions. Some of these question are very arithmetic in
nature and in fact they imply very strong properties of reductions
of varieties. In some simple cases they can be recovered using
known results or they give another point of view on well known
conjectures from arithmetic algebraic geometry.

\medskip

The paper is divided in several sections describing each of these
problems and surveying known results. First we recall some
notation used throughout the paper. In Section 1 we describe
positivity of line bundles on varieties defined over finite
fields.  In Section 2 we survey known results on Kodaira type
vanishing theorems in positive characteristic. In Section 3 we
study vanishing theorems for general reductions from
characteristic zero. In Section 4 we recall several known
constructions of strictly nef line bundles in characteristic zero.
This is related to Keel's question of existence of such bundles
over finite fields. In Section 5 we study variation of positivity
of line bundles in mixed characteristic. In Section 6 we consider
a related question concerning vector bundles. In both
Sections 6 and 7 we pose several conjectures that should fully
explain behavior of strong semistability in mixed characteristic.

\medskip

\subsection{Notation}

Let $X$ be a complete variety defined over some algebraically
closed field $k$.

Let $N_1(X)$ ($N^1(X)$)  be the group of $1$-cycles (divisors,
respectively) modulo numerical equivalence. By the N\'eron-Severi
theorem $N_1(X)_{\QQ}=N_1(X)\otimes \QQ$ and
$N^1(X)_{\QQ}=N^1(X)\otimes \QQ$ are finite dimensional
$\QQ$-vector spaces, dual to each other by the intersection
pairing.

A $\QQ$-divisor $D$ is called \emph{pseudoeffective} if its
numerical class in $N^1(X)_{\QQ}$ is contained in the closure of
the cone generated by the classes of effective divisors.

A line bundle $L$ on $X$ is called \emph{semiample}, if there
exists a positive integer $n$ such that $L^{\otimes n}$ is
globally generated.

A line bundle $L$ on a variety $X$ is called \emph{strictly nef}
if it has positive degree on every curve in $X$.

A locally free sheaf $E$ on $X$ is \emph{nef} if and only if for
any $k$-morphism $f: C\to X$ from a smooth projective curve $C/k$
each quotient of $f^*E$ has a non-negative degree. We say that $E$
is \emph{numerically flat} if both $E$ and $E^*$ are nef.

Let $X$ be a normal projective $k$-variety and let $H$ be an ample
Cartier divisor on $X$. Let $E$ be a rank $r$ torsion free sheaf
on $X$. Then we define the \emph{slope} $\mu _H(E)$ of $E$ as
quotient of the degree of $\det E=(\bigwedge ^r E)^{**}$ with
respect to $H$ by the rank $r$.

We say that $E$ is \emph{slope $H$-semistable} if for every
subsheaf $E'\subset E$ we have $\mu _H(E')\le \mu _H(E)$.

If $k$ has positive characteristic then we say that $E$ is
\emph{strongly slope $H$-semistable} if all the Frobenius pull
backs $(F_X^n)^*E$ of $E$ for $n\ge 0$ are slope $H$-semistable.

Let $X$ be an algebraic $k$-variety. We say that a \emph{very
general point} of $X$ satisfies some property if there exists a
countable union of proper subvarieties of $X$ such that the
property is satisfied for all points outside of this union.

\section{Nef line bundles over finite fields}

The following fact (see, e.g., \cite[Lemma 2.16]{Ke1}) is standard
and it follows easily from existence of the Picard scheme and the
fact that an abelian variety has only finitely many rational
points over a given finite field.

\begin{Proposition}\label{torsion-lb}
A numerically trivial line bundle on a projective scheme defined
over $\bar \FF _p$ is torsion. In particular, a nef line bundle on
a projective curve over $\bar \FF _p$ is semiample.
\end{Proposition}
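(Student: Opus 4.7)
The plan is to invoke Grothendieck's Picard scheme and combine its finite-type property with the fact that a scheme of finite type over a finite field has only finitely many rational points over that field.

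First I would reduce to a model over a finite field. Since $X$ is projective and of finite type over $\bar\FF_p$, it descends to some $X_0$ projective over a finite subfield $\FF_q\subset \bar\FF_p$. Grothendieck's theorem then provides the Picard scheme $\Pic_{X_0/\FF_q}$, locally of finite type over $\FF_q$, together with the open and closed subgroup scheme $\Pic^0_{X_0/\FF_q}$ of finite type. The Néron–Severi theorem tells us that $\Pic^\tau_{X_0/\FF_q}/\Pic^0_{X_0/\FF_q}$ is finite, so $\Pic^\tau_{X_0/\FF_q}$ is itself a group scheme of finite type over $\FF_q$.

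Next I would translate numerical triviality of $L$ into a point condition. The class $[L]$ defines an $\bar\FF_p$-point of $\Pic^\tau_{X_0/\FF_q}\otimes_{\FF_q}\bar\FF_p$, hence a $\bar\FF_p$-point of the $\FF_q$-scheme $\Pic^\tau_{X_0/\FF_q}$. Because $\bar\FF_p=\bigcup_n\FF_{q^n}$ and $\Pic^\tau_{X_0/\FF_q}$ is of finite type over $\FF_q$, this $\bar\FF_p$-point is already defined over some $\FF_{q^n}$. The key observation is then that any scheme of finite type over a finite field has only finitely many rational points (cover by finitely many affines of finite type; each has finitely many rational points), so the group $\Pic^\tau_{X_0/\FF_q}(\FF_{q^n})$ is finite. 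Thus $[L]$ has finite order, and some positive tensor power of $L$ is trivial.

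For the ``in particular'' clause I would work componentwise on the curve $C$. On an irreducible projective curve $C/\bar\FF_p$, a nef line bundle $L$ has $\deg L\ge 0$; if $\deg L>0$ then $L$ is already ample and some power is globally generated, while if $\deg L=0$ then $L$ is numerically trivial, so the first part gives $L^{\otimes N}\cong\cO_C$, which is globally generated, whence $L$ is semiample.

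The main subtlety I anticipate is not in the counting argument but in invoking the Picard scheme for a possibly non-smooth, non-reduced projective scheme: $\Pic^0_{X_0/\FF_q}$ need not be an abelian variety and can carry a unipotent or toric factor. What is crucial here is only its finite-type property, which Grothendieck's construction guarantees in full generality; no properness of $\Pic^0$ is needed, so the obstruction that would appear in characteristic zero arguments does not impede the proof.
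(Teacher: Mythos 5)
Your proof of the main statement is correct and follows essentially the route the paper intends: the paper offers no written proof, only the remark that the result ``follows easily from existence of the Picard scheme and the fact that an abelian variety has only finitely many rational points over a given finite field,'' and your argument is a careful implementation of exactly that idea. Your one genuine improvement over the paper's phrasing is the observation that for an arbitrary projective scheme one should not count points of an abelian variety at all, but rather use that $\Pic^{\tau}_{X_0/\FF_q}$ is a group scheme of finite type (open and closed in the Picard scheme, with finite N\'eron--Severi quotient of $\Pic^0$), since $\Pic^0$ may fail to be proper; the counting then rests only on the elementary fact that a finite-type $\FF_{q^n}$-scheme has finitely many $\FF_{q^n}$-points. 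One small point you leave implicit: to pass from ``$[L]$ is torsion in $\Pic^{\tau}_{X_0/\FF_q}(\bar\FF_p)$'' back to ``$L^{\otimes N}\cong\cO_X$'' you need the comparison map $\Pic(X)\to\Pic_{X_0/\FF_q}(\bar\FF_p)$ to be injective; this holds because the base field is algebraically closed (or because Brauer groups of finite fields vanish), and is worth a sentence.

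The one real gap is in the ``in particular'' clause. Working ``componentwise'' does not suffice when $C$ is reducible: semiampleness is not a component-by-component condition, because a section of $L^{\otimes m}$ on one irreducible component need not extend to $C$. The correct argument is to let $C'$ be the union of the components on which $L$ has degree $0$; by the first part you may replace $L$ by a power trivial on $C'$, and $L$ restricted to the closure $C''$ of $C\setminus C'$ is ample (positive degree on every component of a curve implies ample). Then for $m\gg 0$ divisible by that power, the exact sequence $0\to I_{C'}\otimes L^{\otimes m}\to L^{\otimes m}\to L^{\otimes m}|_{C'}\to 0$ together with Serre vanishing for the ample sheaf $L|_{C''}$ (which kills $H^1$ of $I_{C'}\otimes L^{\otimes m}$, a sheaf supported on $C''$) lets you lift the nowhere-vanishing section of $\cO_{C'}$ and check global generation at points of $C''$. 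This is exactly the gluing step your sketch omits; it is standard but not automatic.
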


In the surface case Artin \cite[2-2.11]{Ar} proved the following
result:

\begin{Theorem} \label{Artin}
A nef and big line bundle on a smooth projective surface defined
over $\bar\FF _p$ is semiample.
\end{Theorem}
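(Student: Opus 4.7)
The plan is to contract the curves on which $L$ is numerically trivial, descend a suitable power of $L$ to an ample line bundle on the resulting normal surface, and conclude via the Nakai--Moishezon criterion.

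First, let $C_1,\ldots,C_r$ be the irreducible curves with $L\cdot C_i=0$. Since $L^2>0$, the Hodge index theorem makes the intersection form negative definite on the orthogonal complement of $L$ in $N^1(X)_\RR$. The classes $[C_i]$ lie in this orthogonal complement, and any two distinct irreducible curves satisfy $C_i\cdot C_j\ge 0$; a standard argument (separating positive and negative coefficients in a putative linear relation and using that effective divisors with disjoint support have non-negative intersection) shows the $[C_i]$ are linearly independent, which forces $r<\infty$ and makes $(C_i\cdot C_j)$ negative definite. Artin's contractibility criterion for surfaces (valid in any characteristic) then produces a proper birational morphism $\pi\colon X\to Y$ to a normal projective surface $Y$ that contracts precisely $E:=C_1\cup\cdots\cup C_r$ and is an isomorphism elsewhere.

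The key step is to show that some power $L^N$ descends, i.e.\ is of the form $\pi^*M$ for some $M\in\Pic(Y)$. Since each $L|_{C_i}$ has degree zero, the restriction $L|_E$ is numerically trivial, and Proposition~\ref{torsion-lb} applied to the projective scheme $E$ over $\bar\FF_p$ produces $N_0$ with $L^{N_0}|_E\cong\cO_E$. The hypothesis $k=\bar\FF_p$ enters crucially a second time when lifting this trivialization from $E$ to the formal completion $\widehat X_{/E}$: the successive obstructions to extending a trivialization across infinitesimal neighborhoods live in groups built from $H^1(E,I_E^n/I_E^{n+1})$, and over $\bar\FF_p$ these are controlled torsion phenomena, so after replacing $N_0$ by a sufficiently divisible multiple $N$ one obtains $L^N|_{\widehat X_{/E}}\cong\cO$. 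Combined with $\pi_*\cO_X=\cO_Y$, Grothendieck's formal existence theorem then yields $L^N=\pi^*M$ for a unique $M\in\Pic(Y)$. This descent step is the main obstacle and is where the finiteness of the base field is indispensable, since in general $\Pic$ of a formal neighborhood of an exceptional configuration is far from torsion.

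Finally one verifies that $M$ is ample on $Y$ via the Nakai--Moishezon criterion for normal projective surfaces. We have $M^2=N^2L^2>0$, so $M$ is big; and for any irreducible curve $D\subset Y$ with strict transform $\widetilde D\subset X$, the projection formula gives $M\cdot D=L^N\cdot\pi^*D=L^N\cdot\widetilde D>0$, since the remaining components of $\pi^*D$ are supported on $E$ and $\widetilde D$ is not one of the $C_i$. Hence $M$ is ample, so semiample, and therefore so are $L^N=\pi^*M$ and $L$ itself.
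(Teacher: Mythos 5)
Your strategy is Artin's original one (contract the $L$-trivial curves, descend a power of $L$, conclude by Nakai--Moishezon), which is genuinely different from the route the paper takes: there, Theorem \ref{Artin} is deduced from Keel's criterion (Theorem \ref{Keel}), since bigness forces $L^{\perp}$ to be at most one-dimensional, so $L|_{L^{\perp}}$ is numerically trivial, hence torsion by Proposition \ref{torsion-lb}, hence semiample. Your approach can be made to work, but the descent step --- which you yourself flag as the crux --- is not correct as written.

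The problem is this: each obstruction group $H^1(E, I_E^n/I_E^{n+1})$ is a finite-dimensional $\bar\FF_p$-vector space, hence killed by $p$; but there are infinitely many infinitesimal neighborhoods, so no single ``sufficiently divisible multiple'' $N$ can absorb them all --- you would need $p^n\mid N$ for every $n$. What actually rescues the argument is that these obstructions \emph{vanish} for $n\gg 0$: negative definiteness of $(C_i\cdot C_j)$ yields an effective cycle $Z$ supported on $E$ with $Z\cdot C_i<0$ for every $i$, and the graded pieces $\cO(-nZ)|_Z$ of the $Z$-adic filtration then have large enough degree on every component that their $H^1$ vanishes for $n$ large. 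That vanishing has nothing to do with the base field; it reduces the whole descent to triviality of a power of $L$ on one \emph{fixed} infinitesimal neighborhood $nZ$. The field $\bar\FF_p$ enters exactly once, at that point: $nZ$ is a projective scheme over $\bar\FF_p$ (reducedness is not needed in Proposition \ref{torsion-lb}) on which $L$ is numerically trivial, so $L|_{nZ}$ is torsion. So you have located the role of the finite field in the wrong place and left the key vanishing unproved. A secondary gap: you assert that Artin's contraction produces a \emph{projective} normal surface $Y$. Over a general field the contraction of a negative definite configuration need not be projective --- its projectivity over $\bar\FF_p$ is essentially the content of the theorem --- so you must either carry out the argument with $Y$ only a complete normal surface (or proper algebraic space) and deduce projectivity at the end from Nakai--Moishezon, or follow Artin and construct $Y$ directly as $\Proj$ of a ring of sections.
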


In \cite[Theorem 0.2]{Ke1} Sean Keel gave the following criterion
for semiampleness:

\begin{Theorem}\label{Keel}
Let $L$ be a nef line bundle on a projective scheme $X$ defined
over a field of positive characteristic. Let $L^{\perp}$ be the
closure of the union of all subvarieties $Y\subset X$ such that
$L^{\dim Y}\cdot Y=0$, taken with the reduced scheme structure.
Then $L$ is semiample if and only if its restriction to
$L^{\perp}$ is semiample.
\end{Theorem}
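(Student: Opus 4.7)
The ``only if'' direction is immediate, since semiampleness of a line bundle is preserved under restriction to any closed subscheme.

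For the ``if'' direction, which is the substance of the theorem, my plan is to extend the semiample structure from $L^\perp$ to all of $X$ by a Frobenius-based cohomological argument that is available only in positive characteristic. After replacing $L$ by a sufficiently divisible power, the hypothesis gives a morphism $h \colon L^\perp \to Z$ together with an ample line bundle $A$ on $Z$ such that $L|_{L^\perp} \cong h^*A$; equivalently, for $n$ large the sections of $L^n|_{L^\perp}$ generate a base-point-free linear system on $L^\perp$ whose fibres are exactly the fibres of $h$. The problem is then to lift these sections to global sections of $L^n$ on $X$, which one studies through the short exact sequence
\[
0 \to I_{L^\perp}\cdot L^n \to L^n \to L^n|_{L^\perp} \to 0
\]
and the associated coboundary $\delta \colon H^0(L^\perp, L^n|_{L^\perp}) \to H^1(X, I_{L^\perp}\cdot L^n)$, whose image is the obstruction to lifting.

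The essential characteristic-$p$ ingredient is that the absolute Frobenius $F_X$ acts compatibly on these cohomology groups, sending $H^i(X, L^n)$ into $H^i(X, L^{pn})$ in a way which is compatible with $\delta$. One exploits the defining relation $L^{\dim Y}\cdot Y = 0$ for the distinguished subvarieties $Y \subset L^\perp$, together with the philosophy of Proposition~\ref{torsion-lb} (numerically trivial classes over $\bar\FF_p$ are killed after sufficient Frobenius iteration), to show that on replacing $n$ by $p^r n$ for $r \gg 0$ the obstruction classes are annihilated. Once the sections lift, they generate $L^{p^rn}$ along $L^\perp$ by construction; on $X\setminus L^\perp$ every subvariety has strictly positive top $L$-degree, so a Nakai--Moishezon-type argument, applied to the complement of $L^\perp$, shows that the lifted linear system has no base points outside $L^\perp$ either. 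Combining these two facts yields the global generation of a power of $L$, hence semiampleness.

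The main obstacle I anticipate is precisely the Frobenius-nilpotency step for the $H^1$ obstructions. Whereas Proposition~\ref{torsion-lb} concerns honestly numerically trivial line bundles, here one is dealing with cohomology classes attached to the infinitesimal ideal $I_{L^\perp}$, and controlling how Frobenius interacts with the thickenings of $L^\perp$ is delicate; it requires either a Frobenius-splitting-type argument, a careful filtration of $I_{L^\perp}$ whose graded pieces are coherent sheaves supported on $L^\perp$ where numerical triviality of the relevant bundles can be invoked, or a direct analysis of the structure of the exceptional locus. This is the real work of the theorem and is what confines the conclusion to positive characteristic, with no analogous statement available over $\CC$.
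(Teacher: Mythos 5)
There is a genuine gap here -- in fact the proposal is a strategy outline rather than a proof, and you say so yourself. First, for context: the paper does not prove this statement; it is quoted verbatim from Keel's paper \cite[Theorem 0.2]{Ke1}, and the survey only remarks that Proposition~\ref{torsion-lb} is a basic tool in its proof. Your outline does capture the genuine shape of Keel's argument (lift sections of $L^n|_{L^\perp}$ through the ideal-sheaf sequence and use Frobenius to annihilate the obstruction classes in $H^1$), but the step you defer -- showing that the coboundary classes die after replacing $n$ by $p^rn$ -- is the entire content of the theorem. Keel's treatment of it requires a careful analysis of the thickenings of $L^\perp$ (the images of $H^0$ of successive infinitesimal neighborhoods stabilize by Noetherianity, and Frobenius moves a section defined on $L^\perp$ to one defined on the $p^r$-th thickening), together with the fact, special to characteristic $p$, that semiampleness descends along finite and even proper surjective morphisms, which is what allows the reduction to normal $X$ and to the case where $L|_{L^\perp}$ is trivial. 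None of this is supplied, so the proposal cannot be accepted as a proof.

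Two further points in the parts you do assert. (1) Appealing to ``the philosophy of Proposition~\ref{torsion-lb}'' is not available in the stated generality: that proposition is specific to $\bar\FF_p$ (it fails already for degree-zero line bundles on curves over $\overline{\FF_p(t)}$), whereas the theorem is over an arbitrary field of positive characteristic; Keel's obstruction-killing argument does not go through numerical triviality of line bundles. (2) The claim that freeness of the lifted system away from $L^\perp$ follows from ``a Nakai--Moishezon-type argument applied to the complement of $L^\perp$'' is not a valid step: Nakai--Moishezon is an ampleness criterion on complete schemes and says nothing about base points of a particular linear system on a non-complete open set. The standard route is to note that the statement is vacuous unless $L$ is big (otherwise $L^{\dim X}\cdot X=0$ forces $L^\perp=X$), write a multiple of $L$ as an ample plus an effective divisor whose support contains $L^\perp$, and generate away from that support -- and even then matching the support of the effective part with $L^\perp$ requires work. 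You should also record the easy reduction to the big case explicitly, since it is what makes the bigness hypothesis available in the rest of the argument.
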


This theorem, combined with earlier ideas of Seshadri, occurred to
be the main new ingredient in Seshadri's new proof of Mumford's
conjecture (see \cite{Se}).

A basic tool used in proofs of Theorems \ref{Artin} and \ref{Keel}
is Proposition \ref{torsion-lb}.

Keel's theorem implies Artin's theorem, because if $X/\bar \FF _p$
is a smooth projective surface and $L$ is a nef and big line
bundle on $X$ then $L^{\perp}$ is at most one-dimensional and
hence $L|_{L^{\perp}}$ is numerically trivial. Thus by Proposition
\ref{torsion-lb} $L|_{L^{\perp}}$ is torsion and Theorem
\ref{Keel} implies that $L$ is semiample.

\medskip

Note that Keel's theorem trivially fails in the characteristic
zero case. As an example one can take, e.g.,  any non-torsion line
bundle of degree zero on a smooth projective curve. It is more
difficult to produce counterexamples to Artin's theorem in the
characteristic zero case but they also exist:

\begin{Theorem} \emph{(see \cite[Theorem 3.0]{Ke1})}\label{Keel-example}
Let $C$ be a smooth projective curve of genus $g\ge 2$ over a
field of characteristic zero. Let $X=C\times C$ and let
$L=p_1^*\omega_C(\Delta)$, where $\Delta$ is the diagonal and
$p_1$ is the projection of $X$ onto the first factor. Then $L$ is
nef and big but it is not semiample.
\end{Theorem}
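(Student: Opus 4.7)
The plan is to verify nefness and bigness by intersection-theoretic computations, and then to rule out semiampleness via an infinitesimal obstruction on the first-order thickening of the diagonal.

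First, using $\Delta\cdot F_i=1$ (where $F_i$ denotes a fiber of $p_i$), the self-intersection $\Delta^2=2-2g$, and $p_i^*\omega_C\equiv(2g-2)F_i$ numerically, one computes $L\cdot\Delta=(2g-2)+(2-2g)=0$ and $L^2=2(2g-2)+(2-2g)=2g-2>0$. For any irreducible curve $D\neq\Delta$, either $D$ is a fiber of $p_1$ (then $L\cdot D=1>0$) or $p_1|_D$ is surjective, so $p_1^*\omega_C\cdot D=(2g-2)\deg(p_1|_D)>0$ while $\Delta\cdot D\ge 0$; hence $L\cdot D>0$, so $L$ is nef, and together with $L^2>0$ this yields bigness.

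For non-semiampleness the starting point is $L|_\Delta\cong\cO_\Delta$, which follows from adjunction: $\cO(\Delta)|_\Delta=N_{\Delta/X}=\omega_C^{-1}$ cancels $p_1^*\omega_C|_\Delta=\omega_C$. The key step is to show that $L|_{2\Delta}$ is nontrivial as a line bundle on the first-order thickening $2\Delta$. Line bundles on $2\Delta$ restricting trivially to $\Delta$ form the kernel of $\Pic(2\Delta)\to\Pic(\Delta)$, isomorphic to $H^1(\Delta,\omega_C)\cong k$ via the nilpotent extension $1\to\omega_C\to\cO_{2\Delta}^\times\to\cO_\Delta^\times\to 1$ (with the additive identification $1+\alpha\leftrightarrow\alpha$ and $\mathcal{I}_\Delta/\mathcal{I}_\Delta^2\cong\omega_C$). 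In local coordinates $(u,v)$, writing $w=u-v$ so that $u\equiv v+w\pmod{w^2}$ on $2\Delta$, a \v{C}ech computation shows that $\cO(\Delta)|_{2\Delta}$ coincides with the trivial first-order extension of $\omega_C^{-1}$ (its transitions involve no $w$-component), whereas the Taylor expansion $\phi_{\alpha\beta}(v+w)=\phi_{\alpha\beta}(v)+w\,\phi'_{\alpha\beta}(v)$ of the transitions of $p_1^*\omega_C$ produces the additive cocycle $d\log h'_{\alpha\beta}\in\omega_C$ that represents the Atiyah class of $\omega_C$. Therefore $[L|_{2\Delta}]$ is exactly the Atiyah class of $\omega_C$ in $H^1(\omega_C)\cong k$, which is nonzero because it represents $c_1(\omega_C)=2g-2\ne 0$.

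To conclude, suppose $L^{\otimes n}$ were globally generated for some $n\ge 1$. The induced morphism $\varphi\colon X\to Y\subset\PP^N$ contracts $\Delta$ to a single point $p\in Y$; since any line bundle on the formal spectrum $\mathrm{Spf}(\widehat{\cO}_{Y,p})$ of a complete local ring is trivial, pulling $\cO_Y(1)$ back along the induced formal morphism $\widehat X_\Delta\to\mathrm{Spf}(\widehat{\cO}_{Y,p})$ forces $L^{\otimes n}|_{\widehat X_\Delta}$, and in particular $L^{\otimes n}|_{2\Delta}$, to be trivial. But in characteristic zero $[L^{\otimes n}|_{2\Delta}]=n\cdot[L|_{2\Delta}]$ remains nonzero for every $n\ge 1$, a contradiction.

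The main obstacle I expect is the Atiyah-class identification on $2\Delta$: one must carefully track the isomorphism $\mathcal{I}_\Delta/\mathcal{I}_\Delta^2\cong\omega_C$ and verify that among the factors $p_1^*\omega_C$ and $\cO(\Delta)$ of $L$ only $p_1^*\omega_C|_{2\Delta}$ contributes a nontrivial first-order class. The characteristic-zero hypothesis enters essentially at the last step: in characteristic $p>0$ the obstruction $n\cdot[L|_{2\Delta}]$ is annihilated by $n=p$, consistently with Artin's theorem (Theorem~\ref{Artin}) asserting that over $\bar\FF_p$ nef and big already forces semiampleness.
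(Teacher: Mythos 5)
Your strategy is the one the paper attributes to Keel: verify nefness and bigness by intersection numbers, and then show that $L|_{2\Delta}$ is non-torsion in $\ker(\Pic (2\Delta)\to \Pic (\Delta))\cong H^1(C,\omega_C)$, which is incompatible with $L^{\otimes n}$ being pulled back from a scheme on which $\Delta$ is contracted to a point. The numerical computations ($L\cdot\Delta=0$, $L^2=2g-2$, nefness) and the final contraction/formal-triviality argument are correct.

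However, the central \v{C}ech computation asserts something false: $\cO(\Delta)|_{2\Delta}$ is \emph{not} the trivial first-order extension of $\omega_C^{-1}$. With $w_\alpha=u_\alpha-v_\alpha$ a local equation of $\Delta$ and $u_\alpha=h_{\alpha\beta}(u_\beta)$, $v_\alpha=h_{\alpha\beta}(v_\beta)$, the transition function of $\cO(-\Delta)$ is
$$\frac{h_{\alpha\beta}(u)-h_{\alpha\beta}(v)}{u-v}\equiv h'_{\alpha\beta}(v)+\tfrac12\, h''_{\alpha\beta}(v)\,w \pmod{w^2},$$
so relative to $p_2^*\omega_C$ it carries the additive cocycle $\tfrac12\, d\log h'_{\alpha\beta}$, and $\cO(\Delta)|_{2\Delta}$ contributes $-\tfrac12 c_1(\omega_C)=-(g-1)$ rather than $0$. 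The correct class is therefore $[L|_{2\Delta}]=(2g-2)-(g-1)=g-1$ (up to sign and the normalization $H^1(C,\omega_C)\cong k$), not the Atiyah class $2g-2$ of $\omega_C$. A cross-check: for a theta characteristic $\theta$ the bundle $p_1^*\theta\otimes p_2^*\theta(\Delta)$ restricted to $2\Delta$ is classically trivial, which forces $[\cO(\Delta)|_{2\Delta}]=-(g-1)\ne 0$; your claim would make it equal to $g-1\ne 0$ instead. Since $g\ge 2$ and the characteristic is zero, $g-1$ is still nonzero, so your conclusion (non-torsion restriction to $2\Delta$, hence non-semiampleness) survives unchanged; but the step as written would fail under scrutiny, and the heuristic that only the $p_1^*\omega_C$ factor contributes a first-order class is exactly where the factor of two is lost.
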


Note that in positive characteristic the bundle $L$ in the above
theorem is semiample. All these results and lack of good
construction methods raised the question whether there exist any
nef line bundles on varieties defined over finite fields which are
not semiample. In \cite[Section 5]{Ke2} Keel gives Koll\'ar's
example of a nef but non-semiample line bundle on a non-normal
surface defined over a finite field. The example is obtained by
glueing two copies of $\PP ^1\times \PP ^1$ but the obtained line
bundle is not strictly nef.

Keel's proof of non-semiampleness in Theorem \ref{Keel-example}
goes via showing that the restriction of $L$ to $2\Delta$ is
non-torsion. Interestingly, Totaro used a similar strategy to show
the following example of a nef but non-semiample line bundle on a
smooth projective surface over $\bar \FF _p$:

\begin{Example}\label{Tot-example}
Let $C$ be a smooth projective curve of genus $2$ defined over
$\bar \FF _p$. Assume that for every line bundle $L$ of order $\le
p$ the map $H^1(C, L)\to H^1(C, F^*L)$, induced by the Frobenius
morphism on C, is injective. In \cite[Lemma 6.4]{To} Totaro showed
that a general curve of genus $2$ satisfies this assumption.

Then one can embedd $C$ into $\PP^1\times \PP^1$ as a curve of
bidegree $(2,3)$. In this case there exists twelve $\bar
\FF_p$-points $p_1,..., p_{12}$ on $C$ such that if $X$ is the
blow up of $\PP^1\times \PP^1$ at these points then the line
bundle $L$, associated to the strict transform $\ti C$ of $C$, has
order $p$ after restricting to $\ti C$ but the restriction of
$L^{\otimes p}$ to $2\ti C$ is non-trivial. In this case Totaro
shows the following theorem (see \cite[proof of Theorem 6.1]{To}):

\begin{Theorem}
The line bundle $L$ is nef but it is not semiample. In fact, we
have $h^0(X,L^n)=1$ for every positive integer $n$.
\end{Theorem}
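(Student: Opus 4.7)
The plan has three parts: nefness of $L$, the computation that $h^0(X,L^n)=1$ for every $n\ge 1$, and the deduction that $L$ is not semiample.

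\textbf{Nefness.} Since $C\subset\PP^1\times\PP^1$ has bidegree $(2,3)$, $C^2=12$; and since each $p_i$ is a smooth point of $C$, the strict transform $\ti C=\pi^*C-\sum_{i=1}^{12}E_i$ satisfies $\ti C^2=12-12=0$, so $L\cdot \ti C=0$. Any other irreducible curve $D$ on $X$ is distinct from $\ti C$, hence $L\cdot D=\ti C\cdot D\ge 0$, so $L$ is nef.

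\textbf{Reduction to a non-lifting statement.} From the restriction sequence
\[
0\to L^{n-1}\to L^n\to L^n|_{\ti C}\to 0,
\]
I induct on $n\ge 1$, starting from $h^0(X,\cO_X)=1$, to prove $h^0(X,L^n)=h^0(X,L^{n-1})$. If $p\nmid n$, then $L^n|_{\ti C}$ is a nontrivial degree-zero line bundle on $\ti C$ (since $L|_{\ti C}$ has exact order $p$), so $H^0(\ti C,L^n|_{\ti C})=0$ and the inductive step is immediate. If $n=pm$, then $L^n|_{\ti C}\cong\cO_{\ti C}$ has a one-dimensional space of sections spanned by the constant $1$, and the task reduces to showing that this constant does \emph{not} lift to a section of $L^n$ on $X$.

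\textbf{Non-lifting via Frobenius.} Any such lift $s\in H^0(X,L^{pm})$ would restrict to a unit on every thickening $k\ti C$, thereby trivializing $L^{pm}|_{k\ti C}$; so it suffices to exhibit some $k$ for which $L^{pm}|_{k\ti C}\ne \cO_{k\ti C}$. For $m=1$ this is the given hypothesis at $k=2$. For larger $m$ I would iterate as follows: the nonzero class $\xi$ of $L^p|_{2\ti C}$ in $\ker(\Pic(2\ti C)\to\Pic(\ti C))\cong H^1(\ti C,L^{-1}|_{\ti C})$ gets sent, on higher thickenings, by the $p$-th power formula $1+u\mapsto 1+u^p$ (valid in characteristic $p$), to the Frobenius pull-back $F^*\xi\in H^1(\ti C, L^{-p}|_{\ti C})$, which represents $L^{p^2}|_{(p+1)\ti C}$ in the relevant graded piece. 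Totaro's injectivity hypothesis, applied to $L^{-1}|_{\ti C}$ (of order $p$), gives $F^*\xi\ne 0$; repeating the procedure (invoking the hypothesis again for the trivial bundle $L^{-p}|_{\ti C}$, of order $1$, and so on) keeps the class nonzero through all Frobenius iterations, establishing non-triviality of $L^{pm}|_{k\ti C}$ for a suitable $k$ and every $m\ge 1$.

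\textbf{Non-semiampleness.} It follows that $h^0(X,L^n)=1$ for every $n\ge 1$; in particular $L^{pm}\ne\cO_X$ for every $m$, so $L$ is not torsion. If $L$ were semiample, some $L^N$ would be globally generated, and since $h^0(X,L^N)=1$ its unique section (up to scalar) would be nowhere vanishing, forcing $L^N\cong\cO_X$ — contradicting non-torsion. Hence $L$ is not semiample. The main obstacle is the Frobenius iteration in the non-lifting step: one must precisely match the $p$-th power in $\Pic$ of appropriate thickenings with the Frobenius pull-back on the relevant cohomology group of $\ti C$, and carefully chain Totaro's injectivity assumption through the iteration so that the obstruction class survives for every multiple of $p$.
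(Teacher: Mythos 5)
The paper does not actually prove this theorem: it quotes it from Totaro and refers to \cite[proof of Theorem 6.1]{To}, describing the strategy only as showing that restrictions of $L$ to infinitesimal thickenings of $\ti C$ are non-torsion. Your overall plan is exactly that strategy, and several pieces of it are complete and correct: the nefness computation ($\ti C^2=12-12=0$ and $L\cdot D=\ti C\cdot D\ge 0$ for $D\ne\ti C$), the induction via $0\to L^{n-1}\to L^n\to L^n|_{\ti C}\to 0$ with the case $p\nmid n$ handled by the exact order $p$ of $L|_{\ti C}$, the observation that a section of $L^{pm}$ restricting to a unit on $\ti C$ trivializes $L^{pm}$ on every $k\ti C$, and the final deduction of non-semiampleness from $h^0(X,L^N)=1$.

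The gap is in the Frobenius iteration, which is precisely the step you yourself flag as the main obstacle. A small issue first: your procedure only treats $n=p^{a+1}$; for general $n=p^{a+1}m'$ with $p\nmid m'$ you need the (easy) extra remark that the relevant class is $m'(F^*)^a\xi$, still nonzero because $H^1$ is a $k$-vector space. The substantive issue: at stage $a$ the cocycle $1+u_{ij}^{p^a}$ for $L^{p^{a+1}}$ produces the class $(F^*)^a\xi\in H^1(\ti C,\mathcal{I}^{p^a}/\mathcal{I}^{p^a+1})$, but the kernel of $\Pic((p^a+1)\ti C)\to\Pic(p^a\ti C)$ is only the quotient of this $H^1$ by the image of the connecting map from $H^0(\cO^{*}_{p^a\ti C})$; equivalently, the obstruction class is defined only up to the ambiguity of re-choosing the trivialization on $p^a\ti C$. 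For $a\le 1$ this ambiguity vanishes, since $H^0(\cO_{k\ti C})=k$ for $k\le p$ (the graded pieces $H^0(\ti C,(L^{-1}|_{\ti C})^{\otimes j})$ vanish for $0<j<p$). But for $a\ge 2$ the scheme $p^a\ti C$ carries non-constant global functions --- already $H^0(\cO_{(p+1)\ti C})\ne k$ because $\mathcal{I}^{p}/\mathcal{I}^{p+1}\cong\cO_{\ti C}$ has a section that lifts --- so $(F^*)^a\xi\ne 0$ in $H^1$ does not by itself show that $L^{p^{a+1}}|_{(p^a+1)\ti C}$ is nontrivial, nor that no section of $L^{p^{a+1}}$ restricts to a unit on $\ti C$. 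One must additionally show that $(F^*)^a\xi$ does not lie in the image of that connecting map; this is the part of Totaro's argument that your sketch does not supply, and without it the claim $h^0(X,L^n)=1$ is only established for $n$ not divisible by $p^3$.
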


\end{Example}

\medskip

Totaro used the above theorem to show the first example of nef and
big line bundle on a smooth projective threefold, which is not
semiample. This shows that Artin's theorem does not generalize to
higher dimensions. These examples do not answer the following
question of Keel {(see \cite[Question 0.9]{Ke2})}, which we
provocatively formulate as a conjecture:

\begin{Conjecture} \label{Keel's-question}
Let $L$ be a strictly nef line bundle on a smooth projective
surface $X$ defined over $\bar\FF _p$. Then $L$ is ample.
\end{Conjecture}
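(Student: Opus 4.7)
The plan is to apply the Nakai-Moishezon criterion: strict nefness of $L$ says $L\cdot C>0$ for every curve $C\subset X$, so $L$ is ample if and only if $L^2>0$. Nefness forces $L^2\ge 0$, hence the entire problem reduces to ruling out $L^2=0$. Two observations already go further: if $L^2=0$ then no power $L^{\otimes n}$ can admit a nonzero global section, for such a section would cut out an effective divisor $D=\sum a_iC_i\in|L^{\otimes n}|$ with
\[
D\cdot D=D\cdot nL=n\sum a_i(L\cdot C_i)>0
\]
by strict nefness, contradicting $D^2=n^2L^2=0$; conversely, if some $L^{\otimes n}$ has a section, the same computation forces $L^2>0$, so that $L$ is big and nef, Theorem \ref{Artin} makes $L$ semiample, and strict nefness upgrades this to ampleness since the associated morphism cannot contract a curve. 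Thus Conjecture \ref{Keel's-question} is equivalent to the assertion that every strictly nef $L$ on a smooth projective surface over $\bar\FF_p$ has a positive power with a nonzero global section.

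Under the contradictory hypothesis $L^2=0$, we additionally have $h^2(X,L^{\otimes n})=h^0(X,K_X-nL)=0$ for $n\gg 0$, since for any ample $H$ the intersection $(K_X-nL)\cdot H$ tends to $-\infty$ and so $K_X-nL$ cannot even be pseudoeffective. Feeding these vanishings into Riemann-Roch, which under $L^2=0$ reads
\[
\chi(X,L^{\otimes n})=\chi(\cO_X)-\tfrac{n}{2}(L\cdot K_X),
\]
and using $\chi(L^{\otimes n})=h^0-h^1+h^2\le 0$ for $n\gg 0$, one rules out $L\cdot K_X<0$ outright; the boundary case $L\cdot K_X=0$ further forces $\chi(\cO_X)\le 0$. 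The problem is thereby reduced to the range $L\cdot K_X\ge 0$.

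In this range, Hodge index gives $K_X^2\le 0$, with equality only when $K_X$ is numerically proportional to $L$, which drastically restricts the minimal model of $X$. For surfaces whose minimal model has Kodaira dimension at most one (ruled, abelian, bielliptic, K3, Enriques, or properly elliptic), the plan is to exploit a naturally occurring fibration: combining relative Picard scheme considerations with Proposition \ref{torsion-lb} applied fiberwise should eventually produce some $L^{\otimes N}$ that becomes effective, contradicting the reduction of the first paragraph.

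The principal obstacle is the sub-case where the minimal model of $X$ is of general type and $L\cdot K_X>0$ strictly. Here none of the available positive-characteristic tools --- Proposition \ref{torsion-lb}, Theorems \ref{Artin} and \ref{Keel}, or Frobenius pullback, which merely multiplies the class of $L$ by $p$ and does not change its numerical type --- is activated by a strictly nef non-big line bundle. Resolving this case seems to require a genuinely new ingredient: for instance, a Kodaira-type vanishing theorem bounding $h^1(X,L^{\otimes n})$ for strictly nef non-big $L$ over $\bar\FF_p$, or a Frobenius-amplification argument yielding sections of $L^{\otimes N}\otimes M^{-1}$ for some ample $M$. It is precisely this sub-case that keeps Conjecture \ref{Keel's-question} open.
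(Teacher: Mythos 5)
The statement you are addressing is posed in the paper as an \emph{open conjecture} (Keel's question); the paper offers no proof of it, only the remark that by the Nakai--Moishezon criterion (or Theorem \ref{Artin}) it is equivalent to the non-existence of a strictly nef $L$ with $L^2=0$. Your first two paragraphs are correct and consistent with this: the reduction to excluding $L^2=0$, the observation that under $L^2=0$ no power of $L$ can have a nonzero section while effectivity of some power would force $L^2>0$ and hence ampleness, and the Riemann--Roch argument eliminating $L\cdot K_X<0$ are all sound; the equivalence you arrive at is exactly the strictly nef case of Conjecture \ref{nef-effective}. Since you end by conceding that the key case is unresolved, you have not proved the statement --- which is the only honest outcome, as the conjecture is open.

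There is, however, a concrete error in your third paragraph: from $L^2=0$, $L\not\equiv 0$ and $L\cdot K_X>0$ the Hodge index theorem does \emph{not} give $K_X^2\le 0$. The intersection form restricted to the plane spanned by $L$ and $K_X$ has determinant $L^2\cdot K_X^2-(L\cdot K_X)^2=-(L\cdot K_X)^2<0$, so this plane has signature $(1,1)$ regardless of the sign of $K_X^2$, and no conflict with the signature $(1,\rho-1)$ of $N^1(X)_{\QQ}$ arises. The paper's own Example \ref{Shimura} realizes exactly the configuration you claim to exclude: on a smooth compact quaternionic Shimura surface one has $\Omega^1_X\simeq L\oplus M$ with $L$ strictly nef, $L^2=0$, and $K_X=L+M$ ample, so $K_X^2=2L\cdot M>0$ and $L\cdot K_X=L\cdot M>0$. (That example is in characteristic zero, but the point is that your implication is false as a statement about the intersection form, hence in any characteristic; and Example \ref{EST-example} shows that the reductions mod $p$ of precisely these bundles can fail to be nef for infinitely many primes, so this is the typical situation, not an exotic corner.) Your proposed dichotomy by Kodaira dimension of the minimal model therefore does not narrow the problem in the way you suggest, and the sub-case you flag as the ``principal obstacle'' is in fact essentially the whole problem.
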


By the Nakai-Moishezon criterion (see \cite[Chapter V, Theorem
1.10]{Ht}), or by Theorem \ref{Artin}, this conjecture is
equivalent to non-existence of strictly nef line bundles $L$ on
$X$ with $L^2=0$. In fact, in view of Totaro's example, one can
pose an even stronger conjecture:

\begin{Conjecture} \label{nef-effective}
Let $L$ be a nef line bundle on a smooth projective surface $X$
over $\bar\FF _p$.  Then the Iitaka dimension $\kappa (L)$ of $L$
is non-negative. Equivalently, we can find some positive integer
$m$ such that $L^{\otimes m}$ has a section.
\end{Conjecture}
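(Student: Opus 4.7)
The plan is a case analysis, handling the easy cases via results already in the paper and Riemann--Roch, then identifying the true obstacle. If $L^2>0$, then $L$ is nef and big, so Theorem \ref{Artin} produces the required section. If $L$ is numerically trivial, Proposition \ref{torsion-lb} gives that $L$ is torsion, again producing a section. So I may assume $L^2=0$ and $L\cdot H>0$ for every ample class $H$.

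Riemann--Roch on the surface gives
\[
\chi(X,L^{\otimes m})=\chi(X,\cO_X)-\tfrac{m}{2}\,L\cdot K_X,
\]
while Serre duality yields $h^2(X,L^{\otimes m})=h^0(X,K_X-mL)$. If $K_X-mL$ is effective, then intersecting with the nef class $L$ gives $L\cdot K_X\ge mL^2=0$. Hence if $L\cdot K_X<0$ I conclude $h^2(X,L^{\otimes m})=0$ for all $m\ge 1$, and Riemann--Roch forces $h^0(L^{\otimes m})\ge\chi(L^{\otimes m})\to+\infty$. If $L\cdot K_X=0$ and $\chi(\cO_X)>0$, then $h^0(L^{\otimes m})+h^0(K_X-mL)\ge\chi(\cO_X)>0$, and the second summand must vanish for $m$ large: effectiveness of $K_X-mL$ combined with intersection against an ample $H$ would give $H\cdot K_X\ge m(H\cdot L)\to+\infty$, contradicting $H\cdot L>0$.

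The main obstacle is the remaining situation $L\cdot K_X>0$ (or $L\cdot K_X=0$ with $\chi(\cO_X)\le 0$). Here Riemann--Roch predicts $\chi(X,L^{\otimes m})\to-\infty$, so growth of $h^1$ dominates and no elementary argument produces sections. A substantive attack would presumably combine the Enriques--Kodaira classification in positive characteristic, applied to the minimal model of $X$ and the Iitaka fibration of $K_X$ case-by-case, with information extracted from the iterated Frobenius pullbacks $L^{\otimes p^n}$, which are again nef, and from the arithmetic finiteness of $\Pic^0(X)(\bar\FF_p)$ underlying Proposition \ref{torsion-lb}. This last case appears to encode the genuinely arithmetic content of the conjecture and is where I would expect any proof to require fundamentally new input specific to $\bar\FF_p$.
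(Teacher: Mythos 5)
The statement you are asked about is a \emph{conjecture}: the paper offers no proof of it, and your proposal does not supply one either. Your reductions of the easy cases are correct and essentially reproduce what the paper already observes. If $L^2>0$ then $L$ is nef and big and $\kappa(L)=2$ (the paper notes this immediately after the conjecture; Artin's theorem is not even needed), and if $L$ is numerically trivial then Proposition~\ref{torsion-lb} applies. Your Riemann--Roch analysis of the case $L^2=0$, $L\not\equiv 0$ is also sound: when $K_X\cdot L<0$ effectivity of $K_X-mL$ would force $K_X\cdot L\ge mL^2=0$, so $h^2(L^{\otimes m})=0$ and $\chi(L^{\otimes m})\to+\infty$; and when $K_X\cdot L=0$ with $\chi(\cO_X)>0$ the term $h^0(K_X-mL)$ dies for large $m$ because $L\cdot H>0$ for ample $H$ (which holds since $L$ is nef and not numerically trivial). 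But all of this only peels away cases where the conclusion is cheap.

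The genuine gap is that the remaining case --- $L^2=0$, $L\not\equiv 0$, and $K_X\cdot L>0$ (or $K_X\cdot L=0$ with $\chi(\cO_X)\le 0$) --- is not a residual technicality: it is where the entire content of the conjecture lives, and you correctly identify but do not close it. Note that Totaro's Example~\ref{Tot-example} sits exactly there: $L$ is the class of a genus-$2$ curve with $L^2=0$, so by adjunction $K_X\cdot L=2>0$, and indeed $h^0(X,L^{\otimes n})=1$ for all $n$, so sections exist but do not grow; this shows that no Euler-characteristic or asymptotic argument can settle this case, since $h^1$ must absorb the whole of $-\chi$. Your suggested further tools (Enriques--Kodaira classification, Frobenius pullbacks, finiteness of $\Pic^0(X)(\bar\FF_p)$) are plausible directions --- and the paper's Theorem~\ref{surface-vanishing} and Corollary~\ref{surface-reduction-of-strictly-nef} give evidence consistent with the conjecture along such lines --- but as written your proposal proves only what was already known and should not be presented as a proof of the statement.
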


If $L$ is nef and $L^2>0$ then $\kappa (L)=2$, so in the above
conjecture we can assume that $L^2=0$. We can also try to relax
the nefness assumption and pose the following conjecture:

\begin{Conjecture} \label{pseudoeffective-effective}
Let $D$ be a pseudoeffective $\QQ$-divisor on a smooth projective
surface $X$ over $\bar\FF _p$. Then $D$ is $\QQ$-linearly
equivalent to an effective $\QQ$-divisor.
\end{Conjecture}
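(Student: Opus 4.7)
The natural plan is to apply the Zariski decomposition on the smooth projective surface $X$ to reduce the statement to the nef case, and then split the nef case according to the self-intersection. Since $D$ is pseudoeffective, its Zariski decomposition $D = P + N$ expresses $D$ as the sum of a nef $\QQ$-divisor $P$ and an effective $\QQ$-divisor $N$ whose support has negative definite intersection form, with $P \cdot N_i = 0$ for every component $N_i$ of $N$. Because $N$ is already effective, $D$ is $\QQ$-linearly equivalent to an effective $\QQ$-divisor if and only if $P$ is, so the problem reduces to Conjecture \ref{nef-effective} applied to $P$.

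For the nef $\QQ$-divisor $P$ I would then distinguish three subcases. If $P$ is numerically trivial, a sufficiently divisible multiple $mP$ is an integral numerically trivial divisor, hence torsion by Proposition \ref{torsion-lb}, so $P \sim_\QQ 0$ and we are done. If $P^2 > 0$, then $P$ is nef and big and Artin's Theorem \ref{Artin} implies that some multiple of $\cO_X(mP)$ is globally generated, again producing sections. The substantial case is $P \not\equiv 0$ with $P^2 = 0$, where Riemann--Roch gives, for any $m$ with $mP$ integral,
\[
\chi(X, \cO_X(mP)) \;=\; \chi(\cO_X) - \frac{m}{2}\, P \cdot K_X.
\]
If $P \cdot K_X < 0$, this tends to $+\infty$, while $(K_X - mP)\cdot P = P \cdot K_X < 0$ together with the nefness of $P$ forces $K_X - mP$ to fail pseudoeffectivity, whence $h^2(X, \cO_X(mP)) = h^0(X, \cO_X(K_X - mP)) = 0$ by Serre duality. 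Thus $h^0(X, \cO_X(mP)) \geq \chi(mP) \to \infty$ and $P$ is $\QQ$-effective.

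The main obstacle is the residual case $P \not\equiv 0$, $P^2 = 0$, $P \cdot K_X \geq 0$, where Riemann--Roch yields no useful lower bound on $h^0$. That this case is delicate rather than vacuous is shown precisely by Totaro's Example \ref{Tot-example}, in which $h^0(L^n) = 1$ for every $n$, so sections exist but semiampleness fails; in particular, one cannot hope to strengthen the conclusion to semiampleness here. To attack this case I would try to combine Keel's criterion \ref{Keel} applied to the locus $P^\perp$, the torsion property of numerically trivial line bundles on the curves contracted by $P$ (Proposition \ref{torsion-lb}), and iterated Frobenius pull-backs, in the hope of constructing a section on $P^\perp$ first and then lifting it. Any complete proof seems forced to exploit the arithmetic nature of $\bar\FF_p$ in an essential way, since, as the discussion around Theorem \ref{Keel-example} shows, the analogous statement fails badly in characteristic zero.
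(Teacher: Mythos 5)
You should be aware that the statement you were asked to prove is posed in the paper as an open conjecture: the paper offers no proof of it, only Lemma~\ref{nef=eff-iff-pseudo=eff}, which shows it is equivalent to Conjecture~\ref{nef-effective} via exactly the Zariski decomposition $D=P+N$ that opens your argument. So the first step of your proposal coincides with everything the paper actually establishes about this statement. Your further case analysis of the nef part $P$ is correct and consistent with the paper's remarks: the numerically trivial case follows from Proposition~\ref{torsion-lb}, the case $P^2>0$ is the nef and big case (where Riemann--Roch alone already gives $\kappa(P)=2$, without needing Artin's Theorem~\ref{Artin}), and your Riemann--Roch argument in the case $P^2=0$, $P\cdot K_X<0$ is sound, since nefness of $P$ kills $h^0(K_X-mP)$ and hence $h^2(mP)$.

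The genuine gap is the case you yourself flag: $P\not\equiv 0$, $P^2=0$, $P\cdot K_X\ge 0$. This is not a residual technicality but the entire content of the conjecture, and the strategy you sketch for it does not close. Keel's Theorem~\ref{Keel} is a criterion for \emph{semiampleness} of a \emph{nef line bundle}, and it presupposes that you can handle $L|_{L^{\perp}}$; when $P^2=0$ and $P$ is not numerically trivial, $P^{\perp}$ can be all of $X$ (there need not be any curve $C$ with $P\cdot C=0$, e.g.\ when $P$ is strictly nef as in Example~\ref{Mumford}), so the criterion gives nothing, and in any case Totaro's Example~\ref{Tot-example} shows that semiampleness is genuinely false in this range, so no semiampleness criterion can be the right tool. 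The only positive evidence the paper offers in this regime is of a different nature: Theorem~\ref{surface-vanishing} shows that \emph{if} a nef $L$ on a surface over $\bar\FF_p$ had $\kappa(L)=-\infty$, then Frobenius would kill $H^1(X,L^{-1})$, and Corollary~\ref{surface-reduction-of-strictly-nef} turns this into a non-existence statement only for those $L$ that come from characteristic zero with $H^1(X,L^{-1})\ne 0$. No argument currently produces a section of $L^{\otimes m}$ for a general nef $L$ with $L^2=0$ over $\bar\FF_p$, and your proposal does not supply one.
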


Conjecture \ref{pseudoeffective-effective} is equivalent to
non-existence of a nef line bundle $L$ with Iitaka dimension
$\kappa (L)=-\infty$ and the numerical Iitaka dimension $\nu
(X)=1$. Obviously, all of the above conjectures can be also
considered in higher dimensions but similarly to the surface case
no answer seems to be known up to date. In fact, in higher
dimensions Conjecture \ref{pseudoeffective-effective} can be
generalized into two different ways: either as asking wether the
cone of curves $\NE (X)\subset N_1(X)_{\QQ}$ is closed or as
asking wether the cone of effective divisors is closed.

\medskip

The assertion of Conjecture \ref{pseudoeffective-effective} seems
to be much stronger than the one of Conjecture \ref{nef-effective}
but in fact we have the following lemma:

\begin{Lemma} \label{nef=eff-iff-pseudo=eff}
Conjectures \ref{nef-effective} and
\ref{pseudoeffective-effective} are equivalent.
\end{Lemma}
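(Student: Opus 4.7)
The plan is to prove the two implications separately, with one direction being essentially immediate and the other using Zariski decomposition on the surface.

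For the direction Conjecture~\ref{pseudoeffective-effective} $\Rightarrow$ Conjecture~\ref{nef-effective}, I would simply observe that a nef line bundle $L$ on a smooth projective surface is pseudoeffective (the nef cone is contained in the pseudoeffective cone). Thus, assuming Conjecture~\ref{pseudoeffective-effective}, the class of $L$ is $\QQ$-linearly equivalent to an effective $\QQ$-divisor, which immediately gives $\kappa (L)\ge 0$.

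For the converse, let $D$ be a pseudoeffective $\QQ$-divisor on $X$. The key tool is Zariski decomposition (valid for pseudoeffective $\QQ$-divisors on smooth projective surfaces in arbitrary characteristic): one writes
\[
D = P + N,
\]
where $N$ is an effective $\QQ$-divisor whose components have negative definite intersection matrix, $P$ is a nef $\QQ$-divisor, and $P\cdot N_i = 0$ for each irreducible component $N_i$ of $N$. Choose a positive integer $m$ such that $mP$ is an integral Cartier divisor; then $\cO_X(mP)$ is a nef line bundle. By Conjecture~\ref{nef-effective}, there is some $n>0$ with $H^0(X, \cO_X(nmP))\ne 0$, so $nmP$ is linearly equivalent to an effective divisor. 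Since $nmN$ is already effective, $nmD = nmP + nmN$ is linearly equivalent to an effective divisor, and hence $D$ is $\QQ$-linearly equivalent to an effective $\QQ$-divisor.

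The only nontrivial ingredient is the existence of Zariski decomposition for pseudoeffective $\QQ$-divisors on smooth projective surfaces, which is classical and characteristic-independent (Zariski, Fujita). Beyond this, the argument is formal. I do not anticipate a real obstacle, provided one keeps track of denominators when passing between $\QQ$-divisors and line bundles.
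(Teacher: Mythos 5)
Your proposal is correct and follows essentially the same route as the paper: the easy direction uses that nef divisors are pseudoeffective, and the converse applies Zariski decomposition $D=P+N$ and invokes Conjecture~\ref{nef-effective} on the nef part $P$. Your version is slightly more careful about clearing denominators, but the argument is the same.
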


\begin{proof}
We only need to check that Conjecture \ref{nef-effective} implies
Conjecture \ref{pseudoeffective-effective}.

Let $D$ be  a pseudoeffective $\QQ$-divisor. Then there exists a
decomposition (so called \emph{Zariski decomposition}) $D=P+N$,
where $P$ is a nef $\QQ$-divisor and $N$ is a (negative) effective
$\QQ$-divisor $N$ such that $P\cdot N=0$. By our assumption we
know that some positive multiple of $P$, and therefore also of
$D$, has a section.
\end{proof}

\section{Killing cohomology by finite morphisms}

If $\cL$ is an ample line bundle on a smooth variety $X$ defined
over a field of characteristic zero then Kodaira's vanishing
theorem says that $H^i(X, \cL ^{-1})$ vanishes for $i< \dim X$.
Kawamata--Viehweg vanishing theorem says that the same vanishing
holds if $\cL$ is only nef and big. However, Raynaud in \cite{Ra}
constructed an example showing that already Kodaira's vanishing
theorem fails in positive characteristic. In this section we do
not try to recover Kodaira's vanishing theorem adding additional
assumptions on the base variety as was done by Deligne and Illusie
in \cite{DI}. Instead try to kill cohomology on all varieties but
using finite morphisms:

\begin{Theorem}\label{Bhatt-vanishing}
Let $X$ be a proper variety over a field of positive
characteristic and let $\cL$ be a semiample line bundle on $X$.
\begin{enumerate}
\item For any $i>0$ there exists a finite surjective morphism $\pi : Y\to
X$ such that the induced map $H^i(X, \cL) \to H^i (Y, \pi^* \cL)$
is zero.
\item If $\cL$ is big then for any $i<\dim X$ there exists
a finite surjective morphism $\pi : Y\to X$ such that the induced
map $H^i(X, \cL ^{-1}) \to H^i (Y, \pi^* \cL^{-1})$ is zero.
\end{enumerate}
\end{Theorem}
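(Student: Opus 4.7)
The plan is to exploit the absolute Frobenius $F_X \colon X \to X$, which in positive characteristic is a finite surjective morphism with $F_X^*\cL \cong \cL^p$, and to combine iterated Frobenius pullbacks with Serre vanishing.

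For part (1), in the ample case the map
$H^i(X,\cL) \to H^i(X,(F_X^n)^*\cL) = H^i(X, \cL^{p^n})$
has vanishing target for $n$ large by Serre vanishing, so $\pi = F_X^n$ suffices. In the general semiample case I would pick $N$ so that $\cL^N$ is globally generated, and take $g \colon X \to Z$ the Stein factorization of the induced morphism, writing $\cL^N = g^*\mathcal{M}$ for $\mathcal{M}$ ample on $Z$. The Leray spectral sequence
\[
H^p(Z, R^q g_*\cO_X \otimes \mathcal{M}^k) \Rightarrow H^{p+q}(X, \cL^{Nk})
\]
collapses for $k$ large by Serre vanishing on $Z$, reducing $H^i(X, \cL^{Nk})$ to $H^0(Z, R^i g_*\cO_X \otimes \mathcal{M}^k)$. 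It then remains to kill these residual contributions, which are governed by the higher direct images $R^q g_*\cO_X$ supported on proper closed subschemes of $Z$; I would handle this by induction on $\dim X$, applying the theorem to these lower-dimensional pieces and pulling the resulting finite covers of $Z$ back to $X$.

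For part (2), the key observation would be a Serre duality reformulation: for Cohen--Macaulay $X$ of dimension $d$ and $\cA$ ample, $H^i(X, \cA^{-p^n}) \cong H^{d-i}(X, \omega_X \otimes \cA^{p^n})^\vee$, and the latter vanishes for $i < d$ and $n$ large by Serre vanishing applied to $\omega_X \otimes \cA^{p^n}$. Iterated Frobenius thus handles the ample case. For general big $\cL$, write $\cL^m \cong \cA \otimes \cO_X(E)$ with $\cA$ ample and $E$ effective; this yields an exact sequence
\[
0 \to \cL^{-m} \to \cA^{-1} \to \cA^{-1}|_E \to 0
\]
and, after $n$-fold Frobenius pullback, the analogous sequence involving the thickenings $p^nE$. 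Once $H^i(X, \cA^{-p^n})$ vanishes, $H^i(X, \cL^{-mp^n})$ is controlled by cohomology on $p^nE$, and a dimension induction applied to $E$ should yield the further finite covers needed.

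The main obstacle in both parts is precisely this reduction from the ample case to the semiample, respectively big, case: killing the cohomology contributed by the fibers of $g$ or by the divisor $E$ forces a dimension induction, and the non-reduced thickenings $p^nE$ call for a version of Serre duality on (possibly non-reduced, possibly singular) subschemes, handled via the dualizing complex. This inductive step on lower-dimensional, possibly non-reduced pieces is where I expect the proof to become technically delicate.
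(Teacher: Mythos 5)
Your treatment of the case where $\cL$ is a positive (or, for part (2), negative) power of an ample line bundle --- iterate the Frobenius and invoke Serre vanishing, resp.\ Serre duality plus Serre vanishing --- is exactly the reduction the paper itself records; note that the paper does not prove the theorem but cites Hochster--Huneke, K.~Smith and Bhatt, remarking that for tensor powers of an ample bundle ``the only non-trivial case is when $\cL=\cO_X$''. That remark points at precisely where your proposal breaks down. In part (1), after Stein factorization $g\colon X\to Z$ and Leray, the term you must kill is $H^0(Z,R^ig_*\cO_X\otimes\mathcal{M}^k)$, and your claim that the sheaves $R^qg_*\cO_X$ are supported on proper closed subschemes of $Z$ is false: $g$ has connected fibres but is in general a fibration with positive-dimensional fibres everywhere (already for $X=C\times C'\to C'$ with $C$ of positive genus, $R^1g_*\cO_X$ is a line bundle on all of $C'$), and in the extreme case $\cL=\cO_X$ one has $Z=\Spec k$ and the surviving term is $H^i(X,\cO_X)$ itself. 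There is therefore no drop in dimension, the objects to be killed are coherent sheaves on $Z$ rather than line bundles on lower-dimensional varieties, and the inductive hypothesis cannot be invoked. Killing the map $H^i(X,\cO_X)\to H^i(Y,\cO_Y)$ for $i\ge 2$ by a finite cover is the actual content of the theorem; it is elementary only for $i=1$ (Frobenius plus Artin--Schreier theory on $\Pic$), while for $i\ge 2$ it rests on Hochster--Huneke's theorem that the absolute integral closure is a big Cohen--Macaulay algebra in characteristic $p$, equivalently on Bhatt's derived-splinter results. No dimension induction is known to replace this.

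Part (2) has the analogous gaps. The duality step $H^i(X,\cA^{-p^n})\cong H^{d-i}(X,\omega_X\otimes\cA^{p^n})^\vee$ needs $X$ Cohen--Macaulay, which is neither assumed nor achieved by normalization in dimension $\ge 3$; for non-CM $X$ the vanishing of $H^i(X,\cA^{-n})$ for $1<i<\dim X$ and $n\gg 0$ genuinely fails, and restoring it after a finite cover is again the Hochster--Huneke/Bhatt input, not a consequence of the dualizing-complex formalism you invoke. Moreover, in the reduction from big to ample, the contribution of $H^{i-1}$ of the thickenings $p^nE$ is not covered by the statement you are inducting on: the relevant bundle on $E$ need be neither semiample nor big, and $p^nE$ is not a variety. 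So your architecture (Frobenius, Stein factorization, the decomposition of a big class into ample plus effective) correctly handles the easy cases that the paper also dismisses in one line, but the core of the theorem --- the case $\cL=\cO_X$ with $i\ge 2$, and the failure of Cohen--Macaulayness in the dual statement --- is not reached by your induction and requires the cited deep results.
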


This theorem was proven by Hochster and Huneke \cite[Theorem
1.2]{HH} in case $\cL$ is a tensor power of a very ample line
bundle (see also \cite[Theorem 2.1]{Sm} and its errratum for the
case when $\cL$ is a tensor power of an ample line bundle), and by
Bhatt \cite[Propositions 7.2 and 7.3]{Bh} in general. Note that in
case $X$ is Cohen--Macaulay and $\cL$ is a tensor power of an
ample line bundle, then the only non-trivial case is when $\cL=\cO
_X$. In the remaining cases, it is sufficient to use Serre's
vanishing theorem (see \cite[Chapter III, Theorem 5.2]{Ht}) and
Serre's duality (see \cite[Chapter III, Corollary 7.7]{Ht}) in the
dual case.

One can ask wether Theorem \ref{Bhatt-vanishing} works under
weaker assumptions on $\cL$, possibly after restricting the base
field to the algebraic closure of a finite field (this is the most
interesting case, as it is the only case that arises when reducing
from characteristic zero). By Proposition \ref{torsion-lb},
Theorem \ref{Bhatt-vanishing}.1 holds for nef line bundles on
curves over $\bar \FF _p$ but it fails for nef line bundles on
smooth projective surfaces over $\bar \FF _p$. More precisely, one
can prove that in Example \ref{Tot-example} we have the following
non-vanishing theorem (see \cite[Theorem 3.1]{La2}):

\begin{Theorem}\label{Totaro}
Let $M=L^{-p-1}$ or $M=L^{p-1}$. Then for any complete $\bar
\FF_p$-surface $Y$ and any generically finite surjective morphism
$\pi : Y\to X$ the induced map $H^1(X, M)\to H^1(Y, \pi^*M)$ is
non-zero.
\end{Theorem}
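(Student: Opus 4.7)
The plan is to produce a distinguished class $\beta\in H^1(X,M)$ whose restriction to $\tilde C$ is the nonzero class $\gamma\in H^1(\tilde C,L^{-1}|_{\tilde C})$ encoding the nontriviality of $L^p|_{2\tilde C}$, and then to propagate nonvanishing through any generically finite cover $\pi$ by invoking Totaro's Frobenius-injectivity hypothesis on $\tilde C$.

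\medskip

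\emph{The class $\beta$ and its restriction to $\tilde C$.} Tensor the ideal sheaf sequence $0\to \cO_X(-\tilde C)\to \cO_X\to \cO_{\tilde C}\to 0$ with $L^{\pm p}$ and use the isomorphism $L^{\pm p}|_{\tilde C}\cong \cO_{\tilde C}$ (available because $L|_{\tilde C}$ has order $p$) to obtain
\[
0\to L^{p-1}\to L^p\to \cO_{\tilde C}\to 0,\qquad\text{resp.}\qquad 0\to L^{-p-1}\to L^{-p}\to \cO_{\tilde C}\to 0.
\]
Let $\beta:=\delta(1)\in H^1(X,M)$ be the image of $1\in H^0(\tilde C,\cO_{\tilde C})$ under the connecting map. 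In the first case Totaro's theorem gives $h^0(X,L^p)=1$ with unique section $s^p$ (where $s$ cuts out $\tilde C$), which vanishes on $\tilde C$; hence $H^0(X,L^p)\to H^0(\tilde C,\cO_{\tilde C})$ is zero and $\beta\ne 0$. In the second case $H^0(X,\cO_X(-p\tilde C))=0$, so again $\beta\ne 0$. On $2\tilde C$ one has the intrinsic extension
\[
0\to L^{-1}|_{\tilde C}\to L^p|_{2\tilde C}\to \cO_{\tilde C}\to 0,
\]
whose class in $H^1(\tilde C,L^{-1}|_{\tilde C})=\ker(\Pic(2\tilde C)\to \Pic(\tilde C))$ is the nonzero element $\gamma$ encoding the nontriviality of $L^p|_{2\tilde C}$. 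Functoriality of the connecting map for the comparison between the defining sequence on $X$ and its restriction to $2\tilde C$ identifies the image of $\beta$ under $H^1(X,L^{p-1})\to H^1(\tilde C,L^{-1}|_{\tilde C})$ with $\gamma$ up to a unit; the case $M=L^{-p-1}$ is analogous.

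\medskip

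\emph{Propagation through $\pi$.} Given $\pi:Y\to X$ generically finite surjective, replace $Y$ by a resolution of its normalization (further pullback cannot resurrect a class that already dies), so $Y$ is smooth projective. Pick an irreducible component $\tilde C_0$ of $\pi^{-1}(\tilde C)_{\mathrm{red}}$ dominating $\tilde C$, pass to its normalization, and set $\sigma:=\pi|_{\tilde C_0}:\tilde C_0\to \tilde C$. By naturality, showing the restriction of $\pi^*\beta$ to $\tilde C_0$ is nonzero reduces to showing $\sigma^*\gamma\ne 0$. Factor $\sigma=\sigma_s\circ \sigma_i$ as a purely inseparable map $\sigma_i$ of degree $p^r$ (an iterate of relative Frobenius) followed by a separable map $\sigma_s$. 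The Frobenius factor is handled by applying Totaro's hypothesis iteratively: $F^*:H^1(\tilde C,\cL)\to H^1(\tilde C,F^*\cL)$ is injective for every line bundle $\cL$ of order $\le p$, applied first to $L^{-1}|_{\tilde C}$ (of order $p$) and then to $\cO_{\tilde C}$ for the higher iterates. The separable factor $\sigma_s$ is handled by the trace map when $p\nmid \deg\sigma_s$ and, when $p\mid \deg\sigma_s$, by Galois descent to $\ZZ/p$-covers together with the observation that the relevant pulled-back torsion line bundle still has no nonzero global sections.

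\medskip

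\emph{Main obstacle.} The most delicate point is the separable factor $\sigma_s$ with $p\mid \deg\sigma_s$: here the trace splitting is unavailable and one must carefully control the potential kernel of $\sigma_s^*$ arising from $\ZZ/p$-group cohomology on global sections, exploiting that the relevant torsion line bundles on the intermediate cover have no nonzero sections. A secondary technical point is the identification of $\beta|_{\tilde C}$ with $\gamma$, which requires tracking the $\mathrm{Tor}^1$ correction coming from restricting the $X$-sequence to $2\tilde C$.
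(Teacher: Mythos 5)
First, a caveat: this survey does not prove Theorem~\ref{Totaro} at all --- it quotes it from \cite[Theorem 3.1]{La2} --- so there is no in-paper proof to compare against. Your opening steps are correct: the sequences $0\to L^{\pm p-1}\to L^{\pm p}\to\cO_{\ti C}\to 0$, the nonvanishing of $\beta=\delta(1)$ (from $h^0(X,L^p)=1$, resp.\ $h^0(X,L^{-p})=0$), and the identification of $\beta|_{\ti C}$ with the class $\gamma\in\ker(\Pic(2\ti C)\to\Pic(\ti C))=H^1(\ti C,L^{-1}|_{\ti C})$ classifying $L^{\pm p}|_{2\ti C}$ are all fine.

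The propagation step, however, has a fatal gap. You reduce to the claim that $\sigma^*\gamma\ne 0$ for the finite cover $\sigma:\ti C_0\to\ti C$ cut out by $Y$, but you place no restriction on which $\sigma$ can occur, and for general finite covers the claim is false: $L^{-1}|_{\ti C}$ is torsion, hence semiample, so Theorem~\ref{Bhatt-vanishing}.1 already provides a finite surjective cover of $\ti C$ on which \emph{all} of $H^1(\ti C,L^{-1}|_{\ti C})$ dies. Concretely, Totaro's hypothesis applied to $\cL=\cO_{\ti C}$ forces $\ti C$ to be ordinary, so $H^1(\ti C,\cO_{\ti C})$ is spanned over $\bar\FF_p$ by Frobenius-fixed classes, each of which is the Artin--Schreier class of an \'etale $\ZZ/p$-cover and dies on that cover; thus a single finite \'etale cover annihilates $H^1(\ti C,\cO_{\ti C})$, which is exactly where your class sits after one Frobenius pullback. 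This is precisely the ``separable with $p\mid\deg$'' case you defer to ``Galois descent to $\ZZ/p$-covers''; it cannot be closed. (A secondary error: in your factorization $\sigma=\sigma_s\circ\sigma_i$ the inseparable part is the Frobenius of the intermediate curve $\ti C_0^{(p^r)}$, not of $\ti C$, so Totaro's hypothesis does not apply as stated; one must instead use $k(\ti C)^{1/p^r}\subset k(\ti C_0)$ to put the Frobenius at the bottom.) A correct argument must use the surface globally rather than a curve restriction: if $\pi^*\beta=0$ (with $Y$ smooth, which as you note is harmless), then $1$ lifts to $t\in H^0(Y,\pi^*L^{\pm p})$ whose restriction to the divisor $\pi^*\ti C$ is nowhere zero. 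For $M=L^{-p-1}$ this is already absurd, since $\pi^*L^{-p}$ has negative degree against the nef and big class $\pi^*H$. For $M=L^{p-1}$, the divisor of $t$ is an effective member of $|\pi^*L^p|$ disjoint from $\pi^{-1}(\ti C)$; its pushforward is a nonzero effective divisor numerically, hence (as $X$ is a rational surface, so $\Pic=\Num$) linearly equivalent to $(\deg\pi)\,pL$ and disjoint from $\ti C$, contradicting $h^0(X,L^n)=1$, whose unique effective divisor is $n\ti C$. The two-dimensional geometry of $Y$ over $X$ is essential here and cannot be replaced by restriction to a single curve over $\ti C$.
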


Similarly, Theorem \ref{Artin} implies that Theorem
\ref{Bhatt-vanishing}.1 holds for nef and big line bundles on
smooth projective surfaces over $\bar \FF _p$ but one can show
that it fails for nef and big line bundles on smooth projective
threefolds over $\bar \FF _p$ (see \cite[Proposition 4.1]{La2}).

\medskip

In analogy to the Kawamata--Viehweg vanishing theorem, it is more
natural to generalize Theorem \ref{Bhatt-vanishing}.2 to nef and
big line bundles on smooth projective varieties. In fact, in low
dimensions one can show an even stronger theorem:

\begin{Theorem} \label{easy-vanishing}
Let $L$ be a nef and big line bundle on a normal projective
variety over field of positive characteristic. Fix an integer
$0\le i< \min (\dim X, 2)$. Then for sufficiently large $m$ the
map
$$H^i(X, L^{-1})\to H^i(X, L^{-p^m})$$ induced by the $m$-th Frobenius pull back is
zero.
\end{Theorem}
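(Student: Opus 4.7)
The statement has two genuine cases, $i=0$ and $i=1$.

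For $i=0$ the conclusion is immediate from bigness of $L$: a nonzero $s \in H^0(X, L^{-1})$ would yield injections $H^0(X, L^{\otimes k}) \hookrightarrow H^0(X, \cO_X) = k$ by multiplication with $s^k$ (using that $X$ is integral), contradicting the growth $h^0(X, L^{\otimes k}) \sim c \, k^{\dim X}$ of big line bundles. So $H^0(X, L^{-1}) = 0$ and the Frobenius map is trivially zero.

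For $i=1$ the plan is in fact to prove the stronger statement that $H^1(X, L^{-p^m}) = 0$ for all $m \gg 0$, which makes the Frobenius pull-back automatically zero. I argue by induction on $\dim X \ge 2$, reducing through hyperplane sections to the case of a smooth projective curve. Fix a very ample line bundle $A$ on $X$. By Fujita's vanishing theorem applied to $\cF = \omega_X$, there is an integer $k_0$ with
\[
H^j(X, \omega_X \otimes A^{k_0} \otimes N) = 0 \quad \text{for all } j > 0 \text{ and all nef } N.
\]
Applied with the nef line bundle $N = L^{p^m}$ and dualized via Serre duality, this gives $H^1(X, L^{-p^m} \otimes A^{-k_0}) = 0$ for every $m \ge 0$. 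Taking a general $Y \in |k_0 A|$, normal by Bertini, the short exact sequence $0 \to L^{-p^m} \otimes A^{-k_0} \to L^{-p^m} \to L^{-p^m}|_Y \to 0$ and this vanishing show that the restriction $H^1(X, L^{-p^m}) \hookrightarrow H^1(Y, L^{-p^m}|_Y)$ is injective.

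For the inductive step $\dim X \ge 3$, $Y$ is a normal projective variety of dimension $\dim X - 1 \ge 2$ and $L|_Y$ is nef and big (bigness restricts to a general very ample section by Kodaira's lemma), so by the induction hypothesis $H^1(Y, L^{-p^m}|_Y) = 0$ for $m$ large, whence $H^1(X, L^{-p^m}) = 0$. For the base case $\dim X = 2$, $Y$ is a smooth projective curve with $\deg_Y(L|_Y) = k_0 \, L \cdot A > 0$ (for $L$ nef and big and $A$ ample on a surface, Hodge index forces $L \cdot A > 0$), so $L|_Y$ is ample; Serre vanishing on the curve $Y$ yields $H^1(Y, L^{-p^m}|_Y) = 0$ for $m \gg 0$, and the injection then forces $H^1(X, L^{-p^m}) = 0$.

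The main obstacle is the Serre-duality step: extracting $H^1(X, L^{-p^m} \otimes A^{-k_0}) = 0$ from Fujita vanishing is direct when $X$ is Cohen--Macaulay (automatic for normal surfaces, which covers the base case and, with care, Bertini-chosen sections in the inductive step), but for a normal but non-Cohen--Macaulay $X$ of dimension $\ge 3$ one must work with the dualizing complex $\omega_X^{\bullet}$ together with a spectral-sequence argument, or pass to a CM or smooth model, to deduce vanishing from the Fujita input.
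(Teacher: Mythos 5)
Your $i=0$ case is fine. The $i=1$ argument, however, collapses at the base case of the induction, and the failure is structural rather than a fixable slip. You reduce to a smooth projective curve $Y$ with $L|_Y$ ample and assert that ``Serre vanishing on the curve $Y$ yields $H^1(Y, L^{-p^m}|_Y)=0$ for $m\gg 0$.'' Serre vanishing kills higher cohomology of \emph{positive} twists; for negative twists on a curve the opposite happens: by Serre duality $h^1(Y,L^{-k}|_Y)=h^0(Y,\omega_Y\otimes L^{k}|_Y)$, which by Riemann--Roch grows like $k\deg (L|_Y)$. So the group you need to vanish is large, the induction bottoms out at a false statement, and your injection $H^1(X,L^{-p^m})\hookrightarrow H^1(Y,L^{-p^m}|_Y)$ (which is correctly derived on a surface from Fujita vanishing plus Serre duality, since normal surfaces are Cohen--Macaulay) carries no information. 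More importantly, no reduction of ``$H^1(X,L^{-n})=0$ for $n\gg 0$'' to curves can possibly work, because that statement is false in dimension one --- this is exactly why the theorem carries the cutoff $i<\min(\dim X,2)$. A correct argument must use $\dim X\ge 2$ together with normality in an essential way, as in Enriques--Severi--Zariski type vanishing.

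For comparison: the paper does not give a proof at all; it remarks that the conclusion holds ``for trivial reasons'' because under these hypotheses one already has $H^i(X,L^{-n})=0$ for all $n\gg 0$, citing \cite[Theorem 10]{Fu} (with effective versions in \cite[Theorem 2.22 and Corollary 2.27]{La1}). So your overall plan --- prove the stronger vanishing of the target group --- coincides with the paper's remark, but that stronger vanishing is precisely the nontrivial content you would need to supply, and your derivation of it is broken. Your secondary concern about Serre duality on non-Cohen--Macaulay normal varieties in dimension $\ge 3$ is legitimate but fillable (normality gives $S_2$, which forces $h^{-1}(\omega_X^{\bullet})=0$ and kills the offending terms of the duality spectral sequence); it is moot next to the base-case failure.
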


Unfortunately, the vanishing holds for trivial reasons because
under the above assumptions one has $H^i(X, L^{-n})=0$ for $n\gg
0$ (see \cite[Theorem 10]{Fu}; see also \cite[Theorem 2.22 and
Corollary 2.27]{La1} for effective versions of this theorem).

The only known examples of nef and big line bundle $L$ on a smooth
projective variety $X$ of dimension $>2$ such that $H^2(X,
L^{-n})\ne 0$ for all $n\gg 0$ were constructed by Fujita (see
\cite[pp. 526--527]{Fu}). He used Raynaud's counterexample to
Kodaira's vanishing theorem in positive characteristic (see
\cite{Ra}). By construction, in Fujita's example the map induced
by the $m$-th Frobenius pull back on $H^2(X, L^{-1})$ vanishes for
all $m\gg 0$. This leaves open the following question:

\begin{Question} \label{interesting}
Let $L$ be a nef and big line bundle on a smooth projective
variety defined over an algebraically closed field of positive
characteristic. Fix an integer $0\le i< \dim X$. Is the map
$H^i(X, L^{-1})\to H^i(X, L^{-p^m})$ induced by the $m$-th
Frobenius pull back zero for $m\gg 0$?
\end{Question}

Note that \cite[Example 5.4]{La2} shows that the answer to this
question is negative if one allows singular varieties. But for
smooth varieties an answer to the above question is not known even
if $L$ is semiample and big.

\medskip
One can also try to weaken conditions on $L$ in Theorem
\ref{easy-vanishing} still hoping that we can kill cohomology
using the Frobenius morphism. This works in some cases as shown by
the following theorem proven in \cite[Theorem 6.1]{La2}:

\begin{Theorem}\label{surface-vanishing}
Let $X$ be a smooth projective surface defined over an algebraic
closure of some finite field. Let $L$ be a nef line bundle on $X$
such that $\kappa(X, L)=-\infty$ (i.e., no power of $L$ has any
sections). Then for large $n$ the map $H^1(X, L^{-1})\to
H^1(X,(F_X^n)^*L^{-1})$ induced by the $n$-th Frobenius morphism
$F_X^n$ is zero.
\end{Theorem}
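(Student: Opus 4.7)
My plan is to translate the statement into a question about splitting of rank-$2$ extensions, study the behavior of the resulting bundle under Frobenius pullback via its Harder--Narasimhan filtration, and exploit the vanishing of the discriminant together with $\kappa(L)=-\infty$.

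First I would observe that $L^2=0$: otherwise $L$ is nef and big, hence semiample by Theorem~\ref{Artin}, contradicting $\kappa(L)=-\infty$. Moreover, $L$ is not numerically trivial (else it would be torsion by Proposition~\ref{torsion-lb}, again contradicting $\kappa(L)=-\infty$), so $L\cdot H>0$ for any ample $H$ and consequently $H^0(X,L^{-m})=0$ for every $m\ge 1$, since a nonzero effective divisor has positive $H$-degree. For a nonzero class $\xi\in H^1(X,L^{-1})$, form the rank-$2$ extension
\[
0\to L^{-1}\to E\to\cO_X\to 0,
\]
for which $c_1(E)=-L$, $c_2(E)=0$, and hence $\Delta(E):=c_1(E)^2-4c_2(E)=L^2=0$. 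The vanishing of $(F_X^n)^*\xi$ is equivalent to the splitting of the pulled-back extension, equivalently to $\cO_X$ appearing as a subsheaf of $(F_X^n)^*E$, because $H^0(L^{-p^n})=0$ forces any nonzero map $\cO_X\to(F_X^n)^*E$ to split the projection onto $\cO_X$.

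The heart of the proof uses Langer's theorem on Frobenius-stabilization of the HN filtration: for $n$ sufficiently large the HN filtration of $(F_X^n)^*E$ with respect to a fixed ample $H$ has strongly $\mu$-semistable factors and is stable under further Frobenius pullback, while $\Delta((F_X^n)^*E)=p^{2n}\Delta(E)=0$. Suppose first that $(F_X^n)^*E$ is not $H$-semistable and let $M_n\subset(F_X^n)^*E$ be the maximal destabilizing sub-line-bundle. The composition $M_n\to\cO_X$ must be nonzero (otherwise $M_n\subset L^{-p^n}$ has slope at most $-p^n\deg_H L<\mu_H((F_X^n)^*E)$, not destabilizing), so $M_n=\cO_X(-D_n)$ for an effective $D_n$ with $\deg_H D_n<\tfrac{p^n}{2}\deg_H L$. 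Writing the HN quotient as $\cO_X(D_n-p^nL)\otimes I_{Z_n}$ and computing $\Delta=0$ yields $\ell(Z_n)=D_n^2-p^nL\cdot D_n\ge 0$; if this were strict, $N:=p^nL-2D_n$ would satisfy $N^2>0$ and $N\cdot H>0$, hence would be big, so some multiple of $N$ together with $2mD_n$ would give an effective representative of a positive multiple of $L$, contradicting $\kappa(L)=-\infty$. Therefore $N^2=0$, and since $L$ and $N$ both lie in the closed forward positive cone, the reverse Cauchy--Schwarz inequality (Hodge index) forces $L\cdot N\ge 0$, i.e., $L\cdot D_n\le 0$. Combined with the nefness of $L$ and effectivity of $D_n$, this gives $L\cdot D_n=0$ and then $D_n^2=0$; Hodge index on $L^\perp$ (negative semi-definite with kernel $\QQ\cdot L$) then forces $D_n\equiv\alpha L$ for some $\alpha\in\QQ_{\ge 0}$, and Proposition~\ref{torsion-lb} together with the effectivity of $D_n$ and $\kappa(L)=-\infty$ imply $\alpha=0$ and $D_n=0$. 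Hence $M_n=\cO_X$ is a subsheaf of $(F_X^n)^*E$ and the extension splits.

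The main obstacle is the remaining case in which $(F_X^n)^*E$ is itself strongly $H$-semistable. Here the trace-free part of $\End((F_X^n)^*E)$ has vanishing Chern classes and is strongly semistable, hence is numerically flat on $X/\bar\FF_p$. Invoking the Langer/Mehta--Subramanian structure theorem (numerically flat bundles on smooth projective varieties over $\bar\FF_p$ are essentially finite), one obtains a finite surjective morphism $\pi\colon Y\to X$ which projectively trivializes $(F_X^n)^*E$, giving $\pi^*(F_X^n)^*E\cong L_Y\oplus L_Y$ (possibly after absorbing a torsion twist) for some line bundle $L_Y$ on $Y$ with $L_Y^{\otimes 2}\cong\pi^*L^{-p^n}$. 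The pulled-back quotient $\pi^*(F_X^n)^*E\to\cO_Y$ then supplies a nonzero section of $L_Y^{-1}$, so $\pi^*L^{p^n}=L_Y^{\otimes(-2)}$ becomes effective on $Y$, i.e., $H^0(Y,\pi^*L^{p^n})\ne 0$. But $\kappa_Y(\pi^*L)=\kappa_X(L)=-\infty$ by the preservation of Iitaka dimension under finite surjective morphisms, so $H^0(Y,\pi^*L^m)=0$ for every $m>0$, a contradiction. Therefore this case cannot occur, and we always land in the unstable case analyzed above, yielding $(F_X^n)^*\xi=0$ uniformly for $n\ge n_0$.
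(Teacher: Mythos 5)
Your proposal is correct, and it follows what is essentially the paper's own route: the paper does not reprove the theorem here but refers to \cite[Theorem 6.1]{La2} and explicitly identifies the key ingredient as ``the higher rank case of Proposition~\ref{torsion-lb}, which follows from boundedness of the family of semistable vector bundles with trivial Chern classes'' --- and that is precisely the input your strongly semistable case rests on, while your unstable case (Hodge index forcing the destabilizing sub-line-bundle of $(F_X^n)^*E$ to be $\cO_X$, hence splitting the extension) is the complementary elementary half. The one place to tighten is the black box ``numerically flat bundles over $\bar\FF_p$ are essentially finite,'' which you should not cite as a named structure theorem; what you actually need, and what is provable, is that a numerically flat bundle $G$ on $X/\bar\FF_p$ has some Frobenius pull-back that is \'etale trivializable: all the bundles $(F^{nd})^*G$ (with $d$ the degree of a finite field of definition) are numerically flat and defined over that finite field, hence by boundedness fall into finitely many isomorphism classes, so the sequence is eventually Frobenius-periodic and Proposition~\ref{La-St} applies; composing the resulting \'etale cover with the finite morphism $F^{nd}_X$ gives the finite surjective $\pi$ you want, after which $\pi^*(F^N)^*\End E\simeq\cO^4$ forces $\pi^*(F^N)^*E\simeq M^{\oplus 2}$ and your norm-map contradiction with $\kappa(L)=-\infty$ goes through. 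With that substitution (which is exactly the boundedness argument the paper alludes to), the proof is complete; the remaining small points --- that a nonzero map $\cO_X\to(F^n)^*E$ must split the projection because $H^0(L^{-p^n})=0$, and that finite-dimensionality of $H^1(X,L^{-1})$ upgrades ``each class dies eventually'' to a uniform $n_0$ --- are handled correctly.
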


Note that if in Example \ref{Tot-example} we take $M=L^{p+1}$ then
we get a nef line bundle with $M^2=0$ on a smooth projective
surface over $\bar \FF_p$ such that $H^1(X, M^{-1})\to
H^1(X,M^{-p^n})$ induced by the $n$-th Frobenius pull back is
always non-zero (see Theorem \ref{Totaro}).

The above theorem is consistent with Conjecture
\ref{nef-effective} saying that there does not exist a nef line
bundle $L$ on a smooth projective surface defined over $\bar \FF
_p$ such that $\kappa (L)=-\infty$ (cf. Corollary
\ref{surface-reduction-of-strictly-nef}).

An interesting point in proof of Theorem \ref{surface-vanishing}
is that we use the higher rank case of Proposition
\ref{torsion-lb}, which follows from boundedness of the family of
semistable vector bundles with trivial Chern classes.

As a corollary to Theorem \ref{surface-vanishing} we get the
following theorem analogous to Theorem \ref{easy-vanishing}:

\begin{Corollary}\label{higher-dim-vanishing}
Let $X$ be a smooth projective variety of dimension $d\ge 2$
defined over an algebraic closure of some finite field. Let $L$ be
a strictly nef line bundle on $X$. Then for large $n$ the map
$H^1(X, L^{-1})\to H^1(X,(F_X^n)^*L^{-1})$ induced by the $n$-th
Frobenius morphism $F_X^n$ is zero.
\end{Corollary}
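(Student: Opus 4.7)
The plan is to reduce to the surface case (Theorem~\ref{surface-vanishing}) by restricting $L$ to a smooth complete-intersection surface $S\subset X$. Fix a very ample line bundle $A$ on $X$ and, using Fujita's vanishing (\cite[Theorem~10]{Fu}) applied to $\omega_X$, choose $m\gg 0$ such that
$$H^i\bigl(X,\omega_X\otimes A^{N}\otimes M\bigr)=0\quad\text{for every } i>0,\ N\ge m,$$
and every nef line bundle $M$ on $X$. By Bertini, pick general $D_1,\ldots,D_{d-2}\in|mA|$ with $S=D_1\cap\cdots\cap D_{d-2}$ a smooth projective surface. Since every curve in $S$ is also a curve in $X$, the restriction $L|_S$ is strictly nef on $S$.

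Next I would show that the restriction $H^1(X,L^{-p^n})\to H^1(S,(L|_S)^{-p^n})$ is injective for every $n\ge 0$. Tensoring the Koszul resolution of the ideal sheaf $\mathcal{I}_S$ with $L^{-p^n}$ gives a length $(d-2)$ resolution whose $k$-th term is a direct sum of copies of $L^{-p^n}\otimes A^{-km}$; the associated iterated short exact sequences reduce the injectivity to the vanishings $H^k(X,L^{-p^n}\otimes A^{-km})=0$ for $k=1,\ldots,d-2$. By Serre duality these are dual to $H^{d-k}(X,\omega_X\otimes L^{p^n}\otimes A^{km})$ with $d-k\ge 2>0$, and they vanish by the choice of $m$ applied to the nef twist $M=L^{p^n}$.

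Now analyze $L|_S$ on $S$. If $(L|_S)^2>0$, then $L|_S$ is nef and big on the surface $S$, so $H^1(S,(L|_S)^{-p^n})=0$ for $n\gg 0$ by Fujita's vanishing. Otherwise $(L|_S)^2=0$; a non-zero section of some $(L|_S)^{\otimes r}$ would produce a non-zero effective divisor $D$ with $L|_S\cdot D=r(L|_S)^2=0$, contradicting strict nefness of $L|_S$, so $\kappa(S,L|_S)=-\infty$ and Theorem~\ref{surface-vanishing} yields the vanishing of $H^1(S,(L|_S)^{-1})\to H^1(S,(L|_S)^{-p^n})$ for $n\gg 0$. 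In both cases, the bottom row of the functorial square
$$\begin{array}{ccc} H^1(X,L^{-1}) & \longrightarrow & H^1(X,L^{-p^n}) \\ \downarrow & & \downarrow \\ H^1(S,(L|_S)^{-1}) & \longrightarrow & H^1(S,(L|_S)^{-p^n}) \end{array}$$
vanishes for $n\gg 0$, and combined with the injectivity of the right vertical this forces the top row to vanish.

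The main obstacle is securing the restriction injectivity \emph{uniformly in $n$}: ordinary Serre vanishing handles only one nef twist at a time, while we need simultaneous vanishing for every Frobenius pullback $L^{p^n}$. Fujita's strengthening of Serre vanishing — which permits an arbitrary nef twist on top of a fixed ample cushion — is exactly what makes this uniform reduction to the surface case work.
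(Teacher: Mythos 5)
Your argument is correct and follows what is evidently the intended route for this corollary: restrict to a general complete-intersection surface $S$ of large degree, use Fujita's vanishing theorem (crucially with the nef twist $M=L^{p^n}$, which is exactly what makes the injectivity of $H^1(X,L^{-p^n})\to H^1(S,(L|_S)^{-p^n})$ uniform in $n$ for a single choice of $S$), and then invoke Theorem \ref{surface-vanishing} when $(L|_S)^2=0$ and Fujita's theorem again when $(L|_S)^2>0$. The only cosmetic point is that a nonzero section of $(L|_S)^{\otimes r}$ could a priori have zero divisor, but then $L|_S$ would be torsion and hence numerically trivial, again contradicting strict nefness, so the dichotomy stands as you use it.
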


\section{Vanishing theorems in mixed characteristic}

Let $R$ be a domain which contains $\ZZ$ and which, as a ring, is
finitely generated over $\ZZ$.  Let $\cX$ be a projective
$R$-scheme and let $\cL$ be an invertible sheaf of $\cO
_{\cX}$-modules. Let $\cX_s$ denote the fibre over $s\in S$ and
let $\cL_s$ be the restriction (i.e., pull-back) of $\cL$ to
$\cX_s$.

Let $R\subset K$ be an algebraic closure of the field of quotients
of $R$. By assumption $K$ is of characteristic zero, so we can
think of $\cX\to S=\Spec R$ as a  model of the generic geometric
fibre $\cX_K$ with polarization $\cL _K$.

The following theorem (see \cite[3.5]{Sm}), conjectured by Huneke
and K. Smith in \cite[3.9]{HS}, was proven (in more general
setting of rational singularities) by N. Hara in \cite[Theorem
4.7]{Ha} and later by V. Mehta and V. Srinivas in \cite[Theorem
1.1]{MSr}.

\begin{Theorem}
Let us assume that $\cX_K$ is smooth and $\cL _K$ is ample. Then
there exists a non-empty Zariski open subset $U\subset S$ such
that for every closed point $s\in U$ the natural map
$$H^i(\cX_s, \cL _s^{-1})\to H^i(\cX_s, F^*\cL _s^{-1}), $$
induced by the Frobenius morphism on the fiber $\cX_s$, is
injective for all $i\ge 0$.
\end{Theorem}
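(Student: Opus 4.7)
The plan is to follow the strategy of Mehta--Srinivas: use the Deligne--Illusie lifting theorem to produce, for every $s$ in a non-empty open subset of $S$, a Frobenius splitting of $\cX_s$, and then deduce the injectivity on cohomology from F-splitness.

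First, I would spread out the given data. After shrinking $S$, we may assume that $\cX\to S$ is smooth and projective, that $\cL$ is $S$-relatively ample, that $R$ is a regular domain, and that every geometric fiber has dimension $d=\dim \cX_K$. By further removing the closed subsets $V(p)\subset S$ for the finitely many primes $p\le d$, we obtain a non-empty open $U_0\subset S$ on which every closed point has residue characteristic $p>d$. A standard argument using Cohen's structure theorem for the complete regular local ring $\hat R_{m_s}$, which is unramified at $p$ after possibly further shrinking $U_0$, yields a natural embedding $W(k(s))\hookrightarrow \hat R_{m_s}$ and a retraction $\hat R_{m_s}\to W(k(s))$; base-changing $\cX\times_S\Spec \hat R_{m_s}$ along this retraction produces a smooth lift of $\cX_s$ to $W(k(s))$, and in particular to $W_2(k(s))$.

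Next, I invoke Deligne--Illusie: since $\cX_s$ lifts to $W_2(k(s))$ and $p>d$, the Frobenius pushforward $F_*\Omega^{\bullet}_{\cX_s/k(s)}$ splits in the derived category $D(\cX_s)$ as $\bigoplus_{i=0}^d \Omega^i_{\cX_s/k(s)}[-i]$. From this derived decomposition I would extract an abelian-category splitting of the natural Frobenius map $\cO_{\cX_s}\to F_*\cO_{\cX_s}$. Projecting onto the top summand furnishes a Cartier-type $\cO_{\cX_s}$-linear map $F_*\omega_{\cX_s}\to \omega_{\cX_s}$, and Grothendieck duality applied to the finite flat Frobenius (using $F^{!}\cO_{\cX_s}=\omega_{\cX_s}^{\,1-p}[d]$) translates this into the desired splitting of $\cO_{\cX_s}\to F_*\cO_{\cX_s}$.

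With $\cX_s$ Frobenius split, tensoring the splitting by an arbitrary line bundle $M$ and using the projection formula gives a split injection $M\hookrightarrow F_*F^*M$; applying $H^i$ and specializing to $M=\cL_s^{-1}$ yields the injectivity of $H^i(\cX_s,\cL_s^{-1})\to H^i(\cX_s,F^*\cL_s^{-1})$ for every $i\ge 0$, and setting $U=U_0$ (shrunk if necessary to preserve the earlier hypotheses) finishes the proof. The main obstacle is precisely this promotion of the Deligne--Illusie decomposition, which lives only in the derived category, to an honest $\cO_{\cX_s}$-linear splitting of $\cO_{\cX_s}\to F_*\cO_{\cX_s}$; the Cartier/Grothendieck-duality manipulation required is the technical heart of the Mehta--Srinivas argument, and is also the point where Hara's alternative via tight closure and F-rationality encounters an analogous difficulty while covering the broader class of rational singularities.
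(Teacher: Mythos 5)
The paper itself offers no proof of this theorem: it is quoted from Hara \cite{Ha} and Mehta--Srinivas \cite{MSr} (resolving the conjecture of Huneke--Smith), so your proposal must be measured against those arguments. It has a fatal gap at its central step: the Deligne--Illusie decomposition of $F_*\Omega^{\bullet}_{\cX_s/k(s)}$ in the derived category cannot be promoted to a splitting of $\cO_{\cX_s}\to F_*\cO_{\cX_s}$, and in fact no such splitting exists for most of the varieties the theorem covers. By duality for the finite flat Frobenius one has $\cHom(F_*\cO_{\cX_s},\cO_{\cX_s})\simeq F_*(\omega_{\cX_s}^{1-p})$, so a Frobenius splitting forces $H^0(\cX_s,\omega_{\cX_s}^{1-p})\ne 0$; this fails whenever $\omega_{\cX_s}$ is big --- already for a curve of genus $\ge 2$ --- even though such $\cX_s$ lifts to $W_2(k(s))$ and satisfies $p>\dim$. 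Put differently, global $F$-splitting would give injectivity of $H^i(X,M)\to H^i(X,M^{p^n})$ for \emph{every} line bundle $M$, hence $H^i(X,M)=0$ for $M$ ample and $i>0$ by Serre vanishing, which is false (e.g.\ for a degree-one line bundle on a curve of genus $\ge 3$). Your duality step is also misidentified: the Cartier-type map $F_*\omega_{\cX_s}\to \omega_{\cX_s}$ exists for any smooth variety with no lifting hypothesis, and under Grothendieck--Serre duality it corresponds to the canonical map $\cO_{\cX_s}\to F_*\cO_{\cX_s}$ itself, not to a retraction of it.

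The statement actually needed is much weaker than $F$-splitting, and that is where the work lies. For $i<d=\dim \cX_K$ the source $H^i(\cX_s,\cL_s^{-1})$ already vanishes on an open subset of $S$ by Kodaira vanishing in characteristic zero and semicontinuity, so the content is the case $i=d$; by Serre duality this is the surjectivity of the trace map $H^0(\cX_s,F_*\omega_{\cX_s}\otimes\cL_s)\to H^0(\cX_s,\omega_{\cX_s}\otimes\cL_s)$ --- the Cartier operator is surjective as a map of sheaves, but not automatically on global sections. Hara and Mehta--Srinivas prove this by passing to the section ring $R_s=\bigoplus_{n\ge 0}H^0(\cX_s,\cL_s^{n})$, identifying $\bigoplus_{n<0}H^d(\cX_s,\cL_s^{n})$ with a graded piece of the local cohomology $H^{d+1}_{\mathfrak m}(R_s)$, and showing that for almost all $s$ the Frobenius acts injectively there ($F$-injectivity, via $F$-rationality of the cone and tight closure); this is where Deligne--Illusie-type degeneration legitimately enters, as input about reductions of the Hodge filtration rather than as a source of splittings. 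Your spreading-out and $W_2$-lifting paragraph is fine, but the argument built on top of it cannot yield the theorem.
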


Note that for $i< \dim \cX _K$ Kodaira's vanishing theorem says
that $H^i(\cX_K, \cL _K^{-1})=0$ so by semicontinuity of
cohomology (see \cite[III, Theorem 12.8]{Ht}) we have $H^i(\cX_s,
\cL _s^{-1})=0$ for $s$ from some open subset of $S$. So the above
theorem is non-trivial only in case $i=\dim X$. On the other hand,
one can ask if similar theorems hold in other cases when we do not
have vanishing of cohomology at the generic fibre. Here is one
such example in the surface case:

\begin{Proposition} \label{mixed-surface-vanishing}
Let us assume that $\cX_K$ is a smooth surface and  $\cL _K$ is a
line bundle with $\kappa (\cL _K)=-\infty$. Assume also that there
exists an ample line bundle $\cA _K$ on $\cX_K$ such that $c_1\cL
_K \cdot c_1 \cA_K> 0$. Then there exists a non-empty Zariski open
subset $U\subset S$ such that for every closed point $s\in U$ and
every positive integer $n$ the natural map
$$H^1(\cX_s, \cL _s^{-1})\to H^1(\cX_s, (F^n)^*\cL _s^{-1}), $$
induced by composition of $n$ absolute Frobenius morphisms on the
fiber $\cX_s$, is injective.
\end{Proposition}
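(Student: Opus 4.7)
My plan is to prove the proposition by establishing the fibrewise vanishing $H^1(\cX_s, \cL _s^{-1}) = 0$ for $s$ in a suitable non-empty open subset $U \subset S$; once this is known, the injectivity of all Frobenius-iterate maps on $H^1(\cX_s, \cL _s^{-1})$ is trivial. First I would spread out: after replacing $S$ with a non-empty open affine subset, I may assume $\pi \colon \cX \to S$ is smooth projective of relative dimension $2$, both $\cL$ and $\cA$ extend to invertible sheaves on $\cX$ with $\cA$ relatively $\pi$-ample, and the fibrewise intersection numbers $c_1(\cL _s)\cdot c_1(\cA_s)$ and $c_1(\cL _s)^2$ are constant on $S$. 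In particular $c_1(\cL _s)\cdot c_1(\cA_s)>0$ on every fibre.

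Next I would try to verify, after further shrinking to a non-empty open $U \subset S$, fibrewise versions of the hypotheses of Theorem \ref{surface-vanishing}: namely $\cL _s$ nef on $\cX_s$ and $\kappa (\cL _s)=-\infty$. Semicontinuity of $h^0$ combined with $\kappa (\cL _K)=-\infty$ yields, for each fixed $n$, openness of the locus $\{s : h^0(\cX_s, \cL _s^n)=0\}$; upgrading this to a single open set uniformly in $n$ would come from analysing the Zariski decomposition $\cL _K = P + N$ on $\cX_K$ in characteristic zero (in the pseudoeffective case), whose structure is rigid and spreads out across $S$. Fibrewise nefness of $\cL _s$ is more delicate, since nefness is not preserved by specialisation; its failure would exhibit $\cL _s$-negative irreducible curves whose $\cA_s$-degrees are bounded by the fixed number $c_1(\cL _s)\cdot c_1(\cA_s)$, and a relative Hilbert scheme argument would spread such a family of curves to a curve on $\cX_K$ that contradicts the characteristic-zero Zariski-decomposition structure.

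With fibrewise nefness and $\kappa (\cL _s)=-\infty$ in hand, Theorem \ref{surface-vanishing} gives that $H^1(\cX_s, \cL _s^{-1}) \to H^1(\cX_s, (F^m)^*\cL _s^{-1})$ vanishes for $m$ sufficiently large. To upgrade this ``map vanishes for large $m$'' to the full vanishing $H^1(\cX_s, \cL _s^{-1})=0$ needed for injectivity for every $n$, I would adapt the proof of Theorem \ref{surface-vanishing} (which relies on boundedness of semistable sheaves with trivial Chern classes) to the mixed-characteristic setting: any nonzero class $\alpha \in H^1(\cX_s, \cL _s^{-1})$ gives a non-split extension $0\to \cL _s^{-1}\to E\to \cO _{\cX_s}\to 0$, and by constructibility of $R^1\pi _*\cL ^{-1}$ one spreads $E$ to a rank-two bundle $E_K$ on $\cX_K$, which together with the rigidity of the Zariski decomposition of $\cL _K$ contradicts $\kappa (\cL _K)=-\infty$. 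The hard part will be this final extension-spreading argument: one must carefully align Frobenius-based reasoning on each special fibre with the characteristic-zero structure on $\cX_K$, and it is precisely here that the hypothesis $c_1(\cL _K)\cdot c_1(\cA_K)>0$ is essential, as it rigidifies $\cL _K$'s geometry through the Zariski decomposition and rules out parasitic non-vanishing classes.
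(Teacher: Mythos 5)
Your opening move---reducing the proposition to the fibrewise vanishing $H^1(\cX_s,\cL_s^{-1})=0$ on a non-empty open $U\subset S$---is fatal, because that vanishing is false in precisely the situations the proposition is designed for. Nothing in the hypotheses forces $H^1(\cX_K,\cL_K^{-1})=0$: take $\cL_K=L^{\otimes 2}$ in Mumford's Example \ref{Mumford}, where $\kappa(\cL_K)=-\infty$, $c_1\cL_K\cdot c_1\cA_K>0$, and $H^1(X,L^{-2})\neq 0$. By upper semicontinuity of cohomology one then has $h^1(\cX_s,\cL_s^{-1})\ge h^1(\cX_K,\cL_K^{-1})>0$ for \emph{every} closed point $s$, so no open subset of $S$ can deliver your vanishing. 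Indeed the entire point of the proposition, as used in Corollary \ref{surface-reduction-of-strictly-nef}, is to combine the injectivity with the eventual vanishing of Theorem \ref{surface-vanishing} in order to contradict the nonvanishing of $H^1$; were $H^1(\cX_s,\cL_s^{-1})$ zero, the statement would be vacuous. The later steps of your plan inherit the problem: the proposition does not assume $\cL_K$ nef or even pseudoeffective, so there is no Zariski decomposition to spread out; fibrewise nefness of the reductions is not available (Example \ref{EST-example} shows it genuinely fails at infinitely many primes even for strictly nef bundles); and your proposed use of Theorem \ref{surface-vanishing} to deduce $H^1=0$ from ``the Frobenius map is eventually zero'' presupposes exactly the injectivity you are trying to prove.

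The actual argument is more elementary and works fibre by fibre without any positivity of $\cL_s$ itself. From the exact sequence $0\to\cO_{\cX_s}\to F_*\cO_{\cX_s}\to F_*B^1_{\cX_s}\to 0$, where $B^1_{\cX_s}$ is the sheaf of exact $1$-forms, the injectivity of $H^1(\cX_s,(F^{n-1})^*\cL_s^{-1})\to H^1(\cX_s,(F^{n})^*\cL_s^{-1})$ follows once $H^0(\cX_s,\Omega^1_{\cX_s}\otimes (F^{n})^*\cL_s^{-1})=0$, using $F_*B^1_{\cX_s}\subset F_*\Omega^1_{\cX_s}$ and the projection formula. That vanishing is a slope computation: after shrinking $S$, the relative Harder--Narasimhan filtration makes $\mu_{\max,\cA_s}(\Omega^1_{\cX_s})$ equal to its value at the generic fibre, while twisting by $(F^n)^*\cL_s^{-1}$ subtracts $p^n\,(c_1\cL_K\cdot c_1\cA_K)>0$ from the maximal slope; hence for residue characteristic $p$ larger than $\mu_{\max,\cA_K}(\Omega^1_{\cX_K})/(c_1\cL_K\cdot c_1\cA_K)$ the twisted sheaf has negative $\mu_{\max}$ and therefore no sections, for every $n\ge 1$. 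This is where the hypothesis $c_1\cL_K\cdot c_1\cA_K>0$ actually enters---not through any rigidity of a Zariski decomposition---and the hypothesis $\kappa(\cL_K)=-\infty$ plays no role in the proof itself, only in the intended application.
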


\begin{proof}
Let $B^1_{\cX_s}$ be the sheaf of exact $1$-forms. By definition
we have an exact sequence
$$0\to \cO_{\cX_s}\to F_*\cO_{\cX_s}\to F_*B^1_{\cX_s}\to 0.$$
Therefore to check that
$$H^1(\cX_s, \cL _s^{-1})\to H^1(\cX_s, F_*\cO_{\cX_s} \otimes \cL _s^{-1})=H^1(\cX_s, F^*\cL _s^{-1}) $$
is injective, it is sufficient to prove that $H^0(\cX_s,
F_*B^1_{\cX_s}\otimes \cL_s^{-1})=0$. But $F_*B^1_{\cX_s}$ is a
subsheaf of $F_*\Omega_{\cX_s}^1$, so by the projection formula we
have
$$H^0(F_*B^1_{\cX_s}\otimes \cL_s ^{-1})\subset  H^0(F_*\Omega_{\cX_s} \otimes \cL_s ^{-1})=H^0(\Omega_{\cX_s} \otimes
F^*\cL_s ^{-1}).$$ So it is sufficient to show that there exists
an open subset $U\subset S$ such that for every closed point $s\in
U$ the sheaf $\Omega_{\cX_s} \otimes F^*\cL_s ^{-1}$ has no
sections. Similarly, to check that
$$H^1(\cX_s, (F^{n-1})^*\cL _s^{-1})\to H^1(\cX_s, (F^n)^*\cL _s^{-1})$$
is injective it is sufficient to prove that $\Omega_{\cX_s}
\otimes (F^{n})^*\cL_s ^{-1}$ has no sections.

We can find a Zariski open subset $V\subset S$ and a line bundle
$\cA$ extending $\cA _K$. Since ampleness is an open property,
shrinking $V$ if necessary, we can assume that $\cA$ on $\cX_V\to
V$ is relatively ample. Existence of the relative
Harder-Narasimhan filtration of $\Omega _{\cX_V/V}$ (see
\cite[Theorem 2.3.2]{HL}) implies that further shrinking $V$ we
can assume that for all closed points $s\in V$ we have
$$\mu _{\max ,\cA_s}(\Omega_{\cX_s})=\mu _{\max , H}(\Omega_{\cX_K}).$$
Since $c_1\cL _s\cdot c_1\cA_s =c_1\cL _K \cdot c_1\cA _K > 0$, we
see that if the characteristic $p$ at a closed point $s\in V$ is
larger than $\mu_{\max , \cA}(\Omega_{\cX_k})/(c_1\cL _K \cdot
c_1\cA _K)$, then for every positive integer $n$
$$\mu _{\max ,\cA_s}(\Omega_{\cX_s} \otimes
(F^n)^*\cL_s ^{-1})=\mu _{\max ,\cA_K}(\Omega_{\cX_K})-p^n (c_1\cL
_K \cdot c_1\cA _K)<0.$$ But existence of sections of $\Omega_{\cX
_s} \otimes (F^{n})^*\cL_s ^{-1}$ would contradict this
inequality.
\end{proof}

\medskip

\begin{Lemma} \label{pseudo-criterion}
Let $C$ be a $\QQ$-divisor on a smooth projective surface $X$. If
$C^2\ge 0$ and $CP>0$ for some nef divisor $P$ then $C$ is
pseudoeffective.
\end{Lemma}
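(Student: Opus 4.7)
The plan is to reduce to the case where $P$ is ample and then exhibit $C$ as a limit of effective $\QQ$-divisors, using Riemann--Roch on the surface $X$ together with the closedness of the pseudoeffective cone.

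First, I would fix any ample divisor $H$ on $X$ and set $P_\epsilon := P+\epsilon H$. For every $\epsilon>0$ the divisor $P_\epsilon$ is ample (sum of a nef and an ample), and for $\epsilon$ small enough
\[
C\cdot P_\epsilon \;=\; C\cdot P + \epsilon\, C\cdot H \;>\;0,
\]
since $C\cdot P>0$ by hypothesis. Replacing $P$ by $P_\epsilon$, I may therefore assume that $P$ is ample.

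Now, for each rational $\delta>0$, set $D_\delta := C+\delta P$. A direct expansion gives
\[
D_\delta^{\,2} \;=\; C^2 + 2\delta\,C\cdot P + \delta^2 P^2 \;>\;0
\]
(all three summands are nonnegative and the middle one is strictly positive), and similarly $D_\delta\cdot P>0$. Choose a positive integer $n$, divisible enough to make $nD_\delta$ an integral divisor. By Riemann--Roch on the surface,
\[
\chi(nD_\delta) \;=\; \chi(\cO_X) + \tfrac{1}{2}(nD_\delta)\cdot(nD_\delta - K_X),
\]
whose dominant term is $\tfrac{n^2}{2}D_\delta^{\,2}>0$, so $\chi(nD_\delta)\to +\infty$. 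The key point — which I regard as the main (mild) obstacle — is to kill $h^2$: by Serre duality $h^2(nD_\delta)=h^0(K_X-nD_\delta)$, and since
\[
(K_X-nD_\delta)\cdot P \;=\; K_X\cdot P - n\,(D_\delta\cdot P) \;\longrightarrow\; -\infty,
\]
the class $K_X-nD_\delta$ has negative intersection with the ample divisor $P$ for large $n$, so it cannot be effective. Hence $h^2(nD_\delta)=0$ and $h^0(nD_\delta)\ge \chi(nD_\delta)\to\infty$; in particular $nD_\delta$ is linearly equivalent to an effective divisor for $n\gg 0$, so $D_\delta$ is an effective (indeed big) $\QQ$-divisor, and therefore pseudoeffective.

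Finally, $C = \lim_{\delta\to 0^+} D_\delta$ in $N^1(X)_{\QQ}$, and the pseudoeffective cone is closed by definition, so $C$ is pseudoeffective as claimed.
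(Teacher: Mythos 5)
Your proof is correct, but it takes a genuinely different route from the paper's. The paper argues by contradiction: if $C\cdot A<0$ for some ample $A$, then a suitable positive combination $H=aA+bP$ is ample with $C\cdot H=0$, and since $C$ is numerically nontrivial the Hodge index theorem forces $C^2<0$; hence $C\cdot A\ge 0$ for every ample $A$, which on a surface means $C$ lies in the dual of the nef cone, i.e.\ in the pseudoeffective cone. Your argument instead perturbs $P$ to an ample class, shows that $D_\delta=C+\delta P$ satisfies $D_\delta^2>0$ and $D_\delta\cdot P>0$, and then runs the standard Riemann--Roch computation (killing $h^2$ by intersecting $K_X-nD_\delta$ with the ample $P$) to conclude that each $D_\delta$ is big, so that $C$ is a limit of big classes and hence pseudoeffective. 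The paper's proof is shorter but leans on two black boxes, the Hodge index theorem and the duality between the nef and pseudoeffective cones on a surface; yours is more self-contained and elementary, produces actual sections, and yields the slightly stronger conclusion that $C$ lies in the closure of the big cone. All the individual steps you give (ampleness of nef plus ample, the sign analysis of $D_\delta^2$, the vanishing of $h^2(nD_\delta)$ for $n\gg 0$, and the closedness of the pseudoeffective cone under the paper's definition) are sound; only cosmetic care is needed to keep $\epsilon$ and $\delta$ rational so that everything stays a $\QQ$-divisor.
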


\begin{proof}
If $CA<0$ for some ample divisor $A$ then taking appropriate
combination $H=aA+bP$ for some $a,b>0$ we have $CH=0$. Since $H$
is ample and $C$ is numerically non-trivial, the Hodge index
theorem (see \cite[Chapter V, Theorem 1.9]{Ht}) gives $C^2<0$.
\end{proof}

\medskip

\begin{Corollary} \label{surface-reduction-of-strictly-nef}
Let $L$ be a pseudoeffective line bundle on a smooth projective
surface defined over a field of characteristic zero. Let us assume
that $L^2\ge 0$ and $H^1(X, L ^{-1})$ is non-zero. Then for almost
all primes $p$ the reduction of $L$ modulo $p$ has a non-negative
Iitaka dimension.
\end{Corollary}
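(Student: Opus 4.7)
The plan is to spread out $X$ and $L$ to a projective $R$-scheme $\cX$ with invertible sheaf $\cL$ for some finitely generated $\ZZ$-subalgebra $R\subset K$, and then split on the value of $\kappa(L)$. If $\kappa(L)\ge 0$, a section of some $L^{m_0}$ spreads out (after shrinking $S=\Spec R$) to a section of $\cL^{m_0}$ that restricts non-trivially to the fibres over a dense open, giving $\kappa(\cL_s)\ge 0$ immediately.

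So assume $\kappa(L)=-\infty$. If $L$ is numerically trivial, each $\cL_s$ is numerically trivial on the $\bar\FF_p$-surface $\cX_s\otimes\bar\FF_p$, hence torsion by Proposition \ref{torsion-lb}, and $\kappa(\cL_s)\ge 0$ is automatic. Otherwise $L\not\equiv 0$, and some ample $A$ satisfies $L\cdot A>0$ (ample divisors span $N^1(X)_{\QQ}$). I would then show that $L$ must be nef with $L^2=0$: the Zariski decomposition $L=P+N$ in characteristic zero, together with $L^2\ge 0$, $P\cdot N=0$ and $N^2\le 0$ (strictly negative if $N\neq 0$), implies that $L^2>0$ or $N\neq 0$ would force $P^2>0$, making $P$ nef and big---hence $L$ big and $\kappa(L)=2$, contradicting the case assumption. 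So $L^2=0$ and $L=P$ is nef.

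Spread $A$ to a relatively ample $\cA$ on some $\cX_V$. Proposition \ref{mixed-surface-vanishing} then applies and yields an open $U\subset V$ such that
$$H^1(X_s,L_s^{-1})\to H^1(X_s,(F^n)^*L_s^{-1})$$
is injective for every closed $s\in U$ and every $n\ge 1$; upper semicontinuity of $h^1$ combined with $H^1(X,L^{-1})\neq 0$ makes the source non-zero. Fix such $s$ with residue characteristic $p$ and base-change to $\bar\FF_p$. Since $L_s^2=0$ and $L_s\cdot A_s>0$, Lemma \ref{pseudo-criterion} shows $\bar L_s:=L_s\otimes\bar\FF_p$ is pseudoeffective on $\bar X_s:=X_s\otimes\bar\FF_p$; apply Zariski decomposition $\bar L_s=P'+N'$ there. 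If $N'\neq 0$, then $P'$ is nef and big, so $\bar L_s$ is big and $\kappa(L_s)\ge 0$ by flat base change. If $N'=0$, then $\bar L_s$ is nef with $\bar L_s^2=0$, and if we had $\kappa(\bar L_s)=-\infty$, Theorem \ref{surface-vanishing} would force the Frobenius map on $H^1(\bar X_s,\bar L_s^{-1})$ to be eventually zero---contradicting the injectivity above, which survives flat base change. Either way $\kappa(L_s)\ge 0$, and since $U$ meets the fibre of $S\to\Spec\ZZ$ over all but finitely many primes, the conclusion follows.

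The main obstacle, I expect, is the final dichotomy on the reduction: $\bar L_s$ need not remain nef after reduction, because the Néron--Severi group can jump and a class on the boundary of the nef cone in characteristic zero may pick up curves with negative intersection in characteristic $p$. The argument must therefore pass through the Zariski decomposition on $\bar X_s$ itself, handling the case with non-trivial negative part by bigness and the case with trivial negative part by playing Proposition \ref{mixed-surface-vanishing} against Theorem \ref{surface-vanishing}.
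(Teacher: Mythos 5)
Your argument is correct and follows essentially the same route as the paper's proof: Lemma \ref{pseudo-criterion} gives pseudoeffectivity of almost all reductions, the Zariski decomposition (applied in characteristic zero and again on each reduction) disposes of the big cases, and the remaining nef case is settled by playing Proposition \ref{mixed-surface-vanishing} against Theorem \ref{surface-vanishing}. The only difference is that you spell out explicitly the edge cases ($\kappa(L)\ge 0$ and $L$ numerically trivial) that the paper leaves implicit.
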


\begin{proof}
If $L$ is pseudoeffective then and $L^2\ge 0$ then by Lemma
\ref{pseudo-criterion} almost all reductions of $L$ are
pseudoeffective. Let $L=P+N$ be the Zariski decomposition (see
proof of Lemma \ref{nef=eff-iff-pseudo=eff}). If $L$ is not nef
then $P^2=L^2-N^2>0$ (since $N$ is non-zero we have $N^2<0$ as
follows from $PN=0$ by the Hodge index theorem). Hence $P$ is big,
which implies that $L$ is also big. The same argument shows that
if we take a reduction of $L$ which is pseudoeffective but not nef
then it is big. So we can assume that a reduction of $L$ is nef.
In this case the assertion follows from Proposition
\ref{mixed-surface-vanishing} and Theorem \ref{surface-vanishing}.
\end{proof}

\begin{Remarks}

\begin{enumerate}
\item In the above corollary, instead of assuming that $H^1(X, L
^{-1})$ is non-zero it is sufficient to assume that there exist a
smooth projective surface $Y$ and a generically finite morphism
$\pi:Y\to X$ such that $H^1(Y, \pi^*L ^{-1})$ is non-zero.
\item Corollary \ref{surface-reduction-of-strictly-nef} implies that if
a line bundle $L$ is strictly nef with non-vanishing $H^1(X,
L^{-m})$ for some positive integer $m$, then its reduction to
positive characteristic is almost never strictly nef. This
happens, e.g., in Mumford's example (see Example \ref{Mumford}).
 In fact, in
this case Biswas and Subramanian (see \cite[Theorem 1.1]{BiS})
proved that strictly nef line bundles on ruled surfaces over
$\bar\FF _p$ are always ample.
\end{enumerate}
\end{Remarks}

\section{Examples of strictly nef line bundles}

Note that if $L$ is a strictly nef line bundle on a proper variety
$X$ and $f: Y\to X$ is a finite morphism then $f^*L$ is also
strictly nef. This gives a lot of examples of strictly nef line
bundles once we have constructed some such bundles. In this
section we review known constructions of strictly nef line bundles
on smooth projective surfaces that do not come from this
construction.

\begin{Example} \label{Mumford}
The most famous example of a strictly nef line bundle is due to
Mumford (see \cite[I, Example 10.6]{Hart}). Namely, let $C$ be a
smooth complex projective curve of genus $\ge 2$. Then on $C$
there exists a rank $2$ stable vector bundle $E$ with  trivial
determinant and such that all symmetric powers $S^nE$ are also
stable. Let $\pi: X=\PP (E)\to C$ be the projectivization of $E$
and let $L=\cO _{\PP (E)}(1)$. Then $L$ is a strictly nef line
bundle on $X$ with $L^2=0$. Note that in this example
$H^1(X,L^{-2})$ is non-zero. More precisely, let us not that the
relative Euler exact sequence
$$0\to \Omega _{X/C}\to \pi ^*E\otimes L^{-1} \to \cO _X\to 0$$
is non split, as it is non-split after restricting to the fibers
of $\pi$. After tensoring this sequence by $L$ and using $\det
E\otimes \cO _C$ we get the sequence
$$0\to L^{-1}\to \pi ^*E \to L\to 0,$$
which gives a non-zero element in $\Ext ^1(L, L^{-1})=H^1(X,
L^{-2})$.

For generalization of Mumford's example to higher dimensions see
S. Subramanian's paper \cite{Su}. For uncountable fields of
positive characteristic a similar example was considered  by V.
Mehta and S. Subramanian  \cite{MS}. The next example shows
existence of strictly nef line bundles even over countable fields
of positive characteristic, provided they have sufficiently large
transcendental degree over its prime field.
\end{Example}

\medskip

\begin{Example} \label{Nagata}
Consider the projective plane $\PP^2$ over some field $k$ and let
us take $r=s^2$, where $s>3$, $k$-rational points $p_1,..., p_r\in
\PP^2(k)$. Let $p: X\to \PP^2$ be the blow up at these points and
let us take $L=p^*\cO_{\PP^2} (s)\otimes \cO(-E)$, where $E$ is
the exceptional divisor of $p$. Clearly, we have $L^2=0$. If all
the chosen points lie on a geometrically irreducible degree $s$
curve $C\subset \PP^2$ defined over $k$ then $L$ is nef. This
follows from the fact that the strict transform $\ti C$ gives an
element of the linear system $|L|$ and hence for every irreducible
curve $D\subset Y$ we have $D\cdot L=D\cdot \ti C \ge 0$ with
equality if and only if $D=\ti C$. This is also the main idea
behind Totaro's construction of a nef non-semiample line bundle,
except that to obtain an example where $C$ has genus $2$ he blow
ups $\PP ^1\times \PP^1$ instead of $\PP ^2$. Obviously, the
bundle $L$ obtained in this way is not strictly nef as $L\cdot \ti
C=0$. However, Nagata proved the following theorem:

\begin{Theorem}
Assume that the points $p_1,..., p_r$ are very general. Then $L$
is strictly nef.
\end{Theorem}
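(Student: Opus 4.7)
The plan is to reduce the strict nefness of $L$ to a numerical inequality on plane curves and then appeal to Nagata's inequality for very general points.

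First I would classify the irreducible curves $D \subset X$. Either $D = E_i$ is one of the exceptional divisors of $p$, or $D = \ti C$ is the strict transform of an irreducible curve $C \subset \PP^2$ of some degree $d \ge 1$ with multiplicity $m_i \ge 0$ at $p_i$. The class of $\ti C$ in $\Pic X$ is $p^*\cO_{\PP^2}(d) - \sum_i m_i E_i$, and a direct intersection computation gives $L\cdot E_i = 1$ and
\[
L\cdot \ti C \;=\; sd - \sum_{i=1}^r m_i.
\]
Hence $L$ is strictly nef on $X$ if and only if $sd > \sum_{i=1}^r m_i$ for every irreducible plane curve $C$ of degree $d$ with multiplicities $m_i$ at $p_1,\dots,p_r$. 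Since $r = s^2$, this is exactly the Nagata inequality $\sqrt{r}\,d > \sum m_i$.

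The second step is to invoke Nagata's inequality. Nagata's argument runs by contradiction: if the inequality fails for some numerical type $(d; m_1,\dots,m_r)$ at a very general $r$-tuple, then the failure persists on a Zariski-open subset of the parameter space of $r$-tuples of points in $\PP^2$. On that open set one can apply a quadratic Cremona transformation centered at three of the points $p_i,p_j,p_k$ whose multiplicities satisfy $m_i+m_j+m_k > d$; this replaces the numerical type by one of strictly smaller degree $d' = 2d - m_i - m_j - m_k$. Iterating this Cremona descent and bookkeeping the resulting multiplicities, one eventually reaches a contradiction. The hypothesis $s > 3$, i.e.\ $r = s^2 \ge 16$, is precisely what guarantees that three such indices exist at every stage and that the descent can be started.

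The main obstacle is of course Nagata's inequality itself: the reduction in the first paragraph is routine intersection theory on a blowup, but the strict inequality $sd > \sum m_i$ for very general points is the whole content of Nagata's work on Hilbert's fourteenth problem and admits no short argument. In writing up the proof I would carry out the first reduction in detail and then cite Nagata's paper for the inequality rather than attempt to reprove it.
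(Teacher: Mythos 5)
Your reduction and final write-up plan coincide exactly with the paper's proof, which computes $LD=sd-\sum_{i=1}^r m_i$ for the strict transform of a degree-$d$ curve and then cites Nagata (Chapter 3, Proposition 1 of \cite{Na}) for the strict inequality $sd-\sum m_i>0$ at very general points. One caveat: your parenthetical sketch of Nagata's argument as a quadratic Cremona descent is not how his proof for $r=s^2\ge 16$ actually runs --- the Cremona descent only handles small numbers of points (essentially $r\le 9$), and for larger square $r$ Nagata argues by specializing groups of the points onto auxiliary curves; since you explicitly defer to the citation rather than reproving the inequality, this mis-description does not affect the validity of your proof.
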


\begin{proof}
Let $D$ be any reduced curve on the blow up $X$ and let $C\in |\cO
_{\PP ^2}(d)|$ be its image. Let $m_1,..., m_r$ be the
multiplicities of $C$ at the points $P_1,..., P_r$, respectively.
Then $LD=sd-\sum _{i=1}^rm_i$. But by \cite[Chapter 3, Proposition
1]{Na} we have $sd-\sum _{i=1}^rm_i>0$.
\end{proof}

\medskip

Unfortunately, this theorem does not say anything for varieties
defined over $\bar \FF _p $.

Note that a similar construction can be used also in different
cases: we can blow up some points  $p_1,..., p_r$ (where $r$ can
be arbitrary) on a smooth projective surface $X$ and take the pull
back of an ample line bundle on $X$ twisted by a suitable negative
combination of exceptional divisors, arranging this so that the
obtained line bundle has self intersection $0$. If the number $r$
of points is sufficiently large and the points are in a very
general position then the obtained line bundle should be strictly
nef. This type of construction was used, e.g., in \cite[Example 3.3]{LR}
but it seems that the proof of strict nefness of the
obtained divisor is incorrect.
\end{Example}

\medskip

\begin{Example} \label{Shimura}
Let $F$ be a real quadratic field and let $D$ be a totally
indefinite quaternion $F$-algebra. Let us recall that a quaternion
algebra over $F$ is an $F$-algebra $D=F+Fi+Fj+Fij$ given by
$i^2=a$, $j^2=b$ and $ij=-ji$, where $a,b\in F$ are some non-zero
elements. $D$ is totally indefinite, if for both embeddings
$F\hookrightarrow \RR$ we have $\RR \otimes _FD\simeq M_2(\RR)$.
In this case we get two inequivalent real representations $\rho_i:
D\to M_2(\RR)$, $i=1,2$. On the algebra $D$ we can introduce a
\emph{norm} $N: D\to F$ by
$$N(x_0+x_1i+x_1j+x_2ij)=x_0^2-ax_1^2-bx_2^2+abx_3^2$$ for $x_i\in
F$. Let $\ti G$ be the group of elements of norm $1$ in a fixed
maximal order $R$ in $D$ and let $G=\ti G/\langle \pm 1\rangle$.
Let $\HH$ be the complex upper half plane. The group $G$  acts on
the product $\HH \times \HH$ by
$$\lambda (z_1,z_2)=(\rho_1(\lambda) z_1,\rho_2(\lambda) z_2).$$
In case $D$ is a division algebra, the quotient surface $X=\HH
\times \HH/G$ is compact. Let us also assume that $X$ is smooth
(all these assumptions are satisfied in some cases). Let $p_1,p_2:
\ti X=\HH \times \HH\to \HH$ be the two projections. Then
$\Omega^1_{\ti X}\simeq p_1^*\Omega^1_{\HH} \oplus
p_2^*\Omega^1_{\HH}$ as $G$-linearized bundles. So by descent we
have $\Omega_X^1\simeq L\oplus M$ for some line bundles $L$ and
$M$.   Then we have the following lemma:

\begin{Lemma} \emph{(\cite[Lemma 3]{SB1})}
The line bundles $L$ and $M$ are strictly nef with $L^2=M^2=0$.
\end{Lemma}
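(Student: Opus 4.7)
The plan is to equip $L$ with a canonical semipositive Hermitian metric coming from the hyperbolic metric on $\HH$, and to read off $L^2=0$, nefness, and strict nefness from the resulting Chern curvature form together with the foliation it determines; the analogous argument works for $M$ by swapping the roles of $p_1$ and $p_2$.

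Let $\omega_{\HH}$ be the K\"ahler form of the hyperbolic metric on $\HH$. Its pullback $p_1^*\omega_{\HH}$ on $\ti X=\HH\times\HH$ is a smooth, semipositive, $G$-invariant $(1,1)$-form (it is the Chern form of the natural metric on $p_1^*\Omega^1_{\HH}$), so it descends to a form $\omega_L\ge 0$ on $X$ whose pullback to $\ti X$ is $p_1^*\omega_{\HH}$. Since $\omega_{\HH}^2=0$ on the complex curve $\HH$, the form $\omega_L^2$ vanishes identically on $X$, whence $L^2=\int_X\omega_L^2=0$; in particular $L\cdot C=\int_C\omega_L\ge 0$ for every irreducible curve $C\subset X$, so $L$ is nef.

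For strict nefness I argue by contradiction: suppose $L\cdot C=0$ for some irreducible curve $C\subset X$. Then $\omega_L|_C\equiv 0$, so at each smooth point of $C$ the tangent line lies in the kernel of $\omega_L$. Pulled back to $\ti X$, this kernel is the tangent distribution to the foliation $\cF$ whose leaves are the fibres $\{z_1\}\times\HH$ of $p_1$. Consequently each irreducible component of the preimage of $C$ in $\ti X$ is a closed analytic subvariety of a single leaf, and being one-dimensional in a one-dimensional ambient manifold it must equal the whole leaf $\{z_1\}\times\HH$ for some $z_1\in\HH$.

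It remains to show that no such leaf descends to a compact curve on $X$, and this is the main obstacle. If it did, the stabilizer $\Gamma_{z_1}:=\{g\in G:\rho_1(g)z_1=z_1\}$ would act cocompactly on the leaf via $\rho_2$, so $\rho_2(\Gamma_{z_1})$ would be a cocompact lattice in $\mathrm{PSL}_2(\RR)$, while $\rho_1(\Gamma_{z_1})$ would be contained in the compact stabilizer $K\subset\mathrm{PSL}_2(\RR)$ of $z_1$. The assumption that $D$ is a quaternion division algebra over the real quadratic field $F$ realizes $G$ as an \emph{irreducible} cocompact lattice in $\mathrm{PSL}_2(\RR)\times\mathrm{PSL}_2(\RR)$, and no infinite subgroup of such a lattice can simultaneously project to a relatively compact set in one factor and to a lattice in the other: by Borel density the Zariski closure of such a subgroup would have to be all of $\mathrm{PSL}_2\times\mathrm{PSL}_2$, contradicting its containment in the proper algebraic subgroup $K\times\mathrm{PSL}_2$. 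This contradiction yields strict nefness of $L$, and the argument for $M$ is symmetric; everything before the arithmetic step is formal, while the exclusion of compact leaves genuinely uses that $D$ is a division algebra.
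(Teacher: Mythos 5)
Your proposal is correct in outline and genuinely different from the paper's argument at both key points. For $L^2=0$ the paper argues indirectly: strict nefness gives $L^2\ge 0$, and $L^2>0$ is excluded because Nakai--Moishezon would make $L$ ample while Bogomolov's vanishing theorem forbids ample line subbundles of $\Omega^1_X$; you instead get $L^2=\int_X\omega_L^2=0$ directly from $(p_1^*\omega_{\HH})^2=p_1^*(\omega_{\HH}^2)=0$, which is more elementary and self-contained. For strict nefness the paper simply says that $L|_C$ is represented by a form whose pull-back to $\ti C$ is the pull-back of a positive form from $\HH$ and concludes $\deg L|_C>0$; this silently assumes that $\ti C$ is not contained in a fibre of $p_1$, since on such a fibre the pulled-back form vanishes identically. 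You make exactly this degenerate case explicit --- $L\cdot C=0$ forces each component $\ti C$ of the preimage to be a leaf $\{z_1\}\times\HH$ of the foliation $\ker\omega_L$ --- and then rule out compact leaves by an arithmetic argument. In this respect your proof is more complete than the one in the paper.

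The one step that does not hold up as written is the appeal to Borel density. That theorem applies to lattices, and $\Gamma_{z_1}$ is neither a lattice in $\mathrm{PSL}_2(\RR)\times\mathrm{PSL}_2(\RR)$ nor a normal subgroup of $G$, so nothing forces its Zariski closure in the product to be everything; the sentence as stated is a non sequitur. The correct (and shorter) way to finish is: since $D$ is a simple algebra, $\rho_1$ is injective on $G$, so $\Gamma_{z_1}$ embeds via $\rho_1$ into the stabilizer $K$ of $z_1$, which is conjugate to the circle group $\mathrm{SO}(2)/\{\pm 1\}$ and in particular abelian; hence $\Gamma_{z_1}$, and therefore $\rho_2(\Gamma_{z_1})$, is abelian. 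But $\rho_2(\Gamma_{z_1})$ is supposed to be a cocompact lattice in $\mathrm{PSL}_2(\RR)$, and such a lattice is never abelian (here Borel density applies legitimately: a lattice in $\mathrm{PSL}_2(\RR)$ is Zariski dense, hence non-abelian). Note that injectivity of $\rho_1$, i.e.\ simplicity of $D$, is where the hypothesis on $D$ really enters; without it $\Gamma_{z_1}$ could a priori contain a large subgroup invisible to $\rho_1$. With this repair your argument is complete.
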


\begin{proof}
Let $C$ be a reduced and irreducible curve in $X$ and let $\ti C$
be an irreducible component of its pre-image in $\ti X$. The line
bundle $L|_C$ is represented by a form whose pull-back to $\ti C$
is the pull-back of a positive form from $\HH$. Therefore $CL=\deg
L|_C>0$. This shows that $L$ is strictly nef and in particular
$L^2\ge 0$. If $L^2>0$ then $L$ is ample by the Nakai--Moishezon
criterion (see \cite[V, Theorem 1.10]{Ha}). But by Bogomolov's
vanishing theorem $\Omega^1_X$ does not contain any ample
subbundles. Therefore $L^2=0$. The same proof works also for $M$.
\end{proof}

\cite{SB1} contains a more general example of the same type but we
will need this particular case later on (see Example
\ref{EST-example}).
\end{Example}

\section{Variation of positivity of line bundles}

It is known that ampleness is an open condition in families (not
necessarily flat). More precisely, let $S$ be an irreducible
noetherian scheme and let $\pi : {\cX}\to S$ be a proper morphism.
Let $\cL$ be a line bundle on $\cX$.

\begin{Theorem} \label{ampleness-is-open} \emph{(see \cite[III, Theorem 4.7.1]{Gr})}
If $\cL_{s_0}$ is ample on $\cX_{s_0}$ for some point $s_0\in S$
then $\cL _s$ is ample for a general point of  $S$, i.e., there
exists an open neighborhood $U\subset S$ of $s_0$ such that $\cL
_s$ is ample on $\cX_s$ for all $s\in U$.
\end{Theorem}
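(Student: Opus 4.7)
The plan is to reduce openness of ampleness to openness of ``being a closed immersion'' for proper morphisms, by spreading out the very ample embedding of the special fibre to a neighborhood of $s_0$. The overall strategy: after replacing $\cL$ by a sufficiently high tensor power, the very ample embedding $\cX_{s_0}\hookrightarrow \PP^N_{k(s_0)}$ produced by ampleness should extend to a family of closed embeddings over a Zariski neighborhood of $s_0$.

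First I would replace $S$ by an affine noetherian neighborhood of $s_0$, which is harmless. By Serre's theorem on ample line bundles, I can pick $n>0$ with $\cL_{s_0}^n$ very ample on $\cX_{s_0}$ and $H^i(\cX_{s_0},\cL_{s_0}^n)=0$ for $i>0$; replacing $\cL$ by $\cL^n$ I may assume $\cL_{s_0}$ itself is very ample with vanishing higher cohomology. Next I would apply the semicontinuity and cohomology-and-base-change theorems (see \cite[III, Theorem 12.11]{Ht}) to find an open neighborhood $U$ of $s_0$ on which $\pi_*\cL$ is locally free, its formation commutes with arbitrary base change, and $R^i\pi_*\cL=0$ for $i>0$.

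Choosing sections $t_0,\dots,t_N$ of $\pi_*\cL$ near $s_0$ that specialize to a basis of $H^0(\cX_{s_0},\cL_{s_0})$, the base locus $Z\subset \cX_U$ of $t_0,\dots,t_N$ is closed and misses $\cX_{s_0}$ (since $\cL_{s_0}$ is globally generated); by properness of $\pi$ the image $\pi(Z)$ is closed in $U$ and does not contain $s_0$. On the open neighborhood $V=U\setminus \pi(Z)$ of $s_0$ the sections therefore define a morphism
\[
\phi: \cX_V \longrightarrow \PP^N_V
\]
over $V$ whose restriction to the fibre at $s_0$ is the given very ample closed embedding.

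The final and most delicate step --- and the main obstacle --- is to show that $\phi$ is itself a closed immersion over some smaller open $V'\ni s_0$. This is the standard fact that, for a proper morphism, being a closed immersion is fibrewise open: $\phi$ is automatically proper (since $\cX\to S$ is proper and $\PP^N_V\to V$ is separated), so it remains to show that $\phi$ is universally injective and unramified in a Zariski neighborhood of the fibre at $s_0$. Both conditions hold on that fibre by very ampleness, and they spread out to a neighborhood via standard openness results for quasi-finite and unramified morphisms combined with Zariski's main theorem; this spreading-out of fibrewise properties is precisely where properness of $\pi$ is essential and constitutes the technical heart of the argument. Once it is established, $\cL_s$ is very ample on $\cX_s$ for every $s\in V'$, hence ample, finishing the proof.
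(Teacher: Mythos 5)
The paper does not actually prove this statement --- it is quoted from \cite[III, Theorem 4.7.1]{Gr}, whose proof runs through the theorem on formal functions over the completed local ring $\widehat{\cO}_{S,s_0}$. Your spreading-out strategy is the standard alternative and is correct \emph{provided} $\cX$ (equivalently $\cL$) is flat over $S$ near $s_0$; but flatness is not among the hypotheses here: $\pi$ is only assumed proper. This breaks the second step of your argument. Semicontinuity and cohomology-and-base-change in the form you invoke (\cite[III, Theorem 12.11]{Ht}) apply only to sheaves flat over the base; without flatness you have no control over the map $\pi_*\cL\otimes k(s_0)\to H^0(\cX_{s_0},\cL_{s_0})$, hence no way to produce sections over a neighbourhood of $s_0$ that restrict to a very ample linear system on the fibre. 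Producing such sections is the real technical heart of the theorem: one localizes and completes at $s_0$, uses the theorem on formal functions together with a graded-module finiteness argument to show that for $N\gg 0$ sections of $\cL^N_{s_0}$ lift through all infinitesimal neighbourhoods of the fibre and hence to the completion, and then descends (relative) ampleness along the faithfully flat morphism $\Spec\widehat{\cO}_{S,s_0}\to\Spec\cO_{S,s_0}$.

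By contrast, the step you single out as ``the most delicate'' --- that a proper $V$-morphism $\phi:\cX_V\to\PP^N_V$ which is a closed immersion on the fibre over $s_0$ is a closed immersion over a smaller neighbourhood --- is comparatively routine: by upper semicontinuity of fibre dimension and Zariski's main theorem, $\phi$ is finite over an open subset of $\PP^N_V$ containing all of $\PP^N_{s_0}$ (shrink $V$ using properness of $\PP^N_V\to V$), and surjectivity of $\cO\to\phi_*\cO_{\cX}$ then follows from Nakayama applied to the coherent cokernel, since pushforward along a finite morphism commutes with base change. In summary, your outline is a complete and correct proof once one adds the hypothesis that $\pi$ is flat; for the statement as given, the missing ingredient is the formal-functions argument that supplies the liftable sections.
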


\begin{Corollary}
If $\cL_{s_0}$ is nef on $\cX_{{s_0}}$ for some geometric point
$s_0\in S$ then $\cL _{{s}}$ is nef for a very general point of
$S$, i.e., there exist countably many open and dense subsets
$U_m\subset S$ such that $\cL _{{s}}$ is nef for every geometric
point $s\in \bigcap U_m$.
\end{Corollary}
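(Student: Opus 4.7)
The plan is to reduce the nefness claim to Theorem \ref{ampleness-is-open} via the standard characterization that a line bundle $L$ on a projective variety is nef if and only if $L^{\otimes m}\otimes H$ is ample for every integer $m\ge 1$ and some (equivalently, every) fixed ample line bundle $H$.

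After shrinking $S$ to a dense open subset (using openness of the projective locus of a proper morphism over a noetherian base, which is available as soon as one fiber admits an ample line bundle), I may assume $\pi:\cX\to S$ is projective and fix a $\pi$-relatively ample line bundle $\cH$ on $\cX$. For each integer $m\ge 1$, the restriction $(\cL^{\otimes m}\otimes \cH)_{s_0}=\cL_{s_0}^{\otimes m}\otimes \cH_{s_0}$ is ample on $\cX_{s_0}$, since in $N^1(\cX_{s_0})_{\QQ}$ it is the sum of the nef class $m\cdot [\cL_{s_0}]$ and the ample class $[\cH_{s_0}]$. Theorem \ref{ampleness-is-open} then produces an open neighborhood $U_m\subset S$ of $s_0$ on which $\cL^{\otimes m}\otimes \cH$ is fiberwise ample. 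As $S$ is irreducible, each $U_m$ is automatically dense.

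Now let $s\in \bigcap_{m\ge 1} U_m$ be any geometric point; by the definition recalled in the Notation section, this is a very general point of $S$. For any irreducible curve $C\subset \cX_s$, ampleness of $\cL_s^{\otimes m}\otimes \cH_s$ yields
$$m(\cL_s\cdot C)+\cH_s\cdot C>0,$$
so $\cL_s\cdot C>-\frac{1}{m}(\cH_s\cdot C)$. Letting $m\to\infty$ forces $\cL_s\cdot C\ge 0$, and hence $\cL_s$ is nef.

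The only delicate point is the preliminary reduction to a projective $\pi$ carrying a relatively ample line bundle $\cH$; once this is arranged, the rest is formal, and the countable intersection $\bigcap_m U_m$ of dense opens is precisely the shape of locus demanded by the notion of \emph{very general}.
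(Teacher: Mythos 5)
Your core argument is exactly the paper's: twist $\cL^{\otimes m}$ by a relatively ample $\cO_{\cX}(1)$, apply openness of ampleness (Theorem \ref{ampleness-is-open}) to get dense open subsets $U_m\ni s_0$, and note that for $s\in\bigcap U_m$ the inequalities $m(\cL_s\cdot C)+\cH_s\cdot C>0$ for all $m$ force $\cL_s\cdot C\ge 0$. That part is correct and coincides with the paper's proof.

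The problem is the preliminary reduction, which you rightly flag as the delicate point but justify incorrectly. Here $\pi$ is only proper and $\cL_{s_0}$ is only nef, so no fiber is known to carry an ample line bundle, and there is nothing to which ``openness of the projective locus'' can be applied; if every fiber of $\pi$ is proper but non-projective, the dense open subset you want simply does not exist. (Two further issues: EGA III 4.7.1 gives openness of the locus where a \emph{fixed} line bundle is ample, not of the locus where the fiber admits \emph{some} ample line bundle; and even granting a dense open over which $\pi$ is projective, it need not contain $s_0$, which would destroy the hypothesis.) The paper's fix is Chow's lemma: there is a projective surjective $S$-morphism $f:\cX'\to\cX$ with $\cX'\to S$ projective, and since each $f_s:\cX'_s\to\cX_s$ is proper and surjective, $\cL_s$ is nef if and only if $f_s^*\cL_s$ is nef. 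Replacing $(\cX,\cL)$ by $(\cX',f^*\cL)$ requires no shrinking of $S$, and with that substitution the rest of your argument goes through verbatim.
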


\begin{proof}
Using Chow's lemma we can reduce to the case where $\pi$ is
projective. Let $\cO_{\cX} (1)$ be a $\pi$-ample line bundle on
$\cX$. By Theorem \ref{ampleness-is-open} we know that for every
positive integer $m$ the set $U_m$ of points for which
$(\cL^{\otimes m}\otimes \cO_{\cX}(1))_s$ is ample is open and
dense in $S$. It is easy to see that these sets satisfy the
required assertion.
\end{proof}

\medskip

Note that we can assume that the sequence $\{ U_m\} _{m\in \NN}$
is descending, i.e., $U_{m+1}\subset U_m$ for all $m$ and one can
ask if such a sequence must stabilize. In general, this is too
much to hope for but $\bigcap U_m$ contains the generic geometric
point of $S$ so we can ask if it contains any closed points. This
is interesting only if $S$ has only countably many points as only
then the set of closed geometric points $s\in S$ for which $L_s$
is nef can be empty. Indeed, this can really happen as shown by
the following example due to Monsky \cite{Mon1}, Brenner \cite{Br}
and Trivedi \cite{Tr}:

\begin{Example}
Let us start with recalling the following result of Monsky \cite[Theorem]{Mon1}:

\begin{Theorem}
Let $R_t = K_t[x, y, z]/(P_t)$, where  $K_t$ is an algebraic
closure of ${\FF}_2(t)$ and set
$$P_t=z^4+xyz^2+ x^3z+y^3z+t x^2y^2.$$
Then the Hilbert-Kunz multiplicity of $R_t$ is equal to
$3 + 4^{-m(t)}$, where
$$m(t) =\left\{\begin{array}{l}
\hbox{degree of $\lambda$ over $\FF_2$, if $t=\lambda^2+\lambda$ is algebraic over }\FF_2,\\
\infty , \hbox{ if $t$ is transcendental over }\FF_2.\\
\end{array}\right.$$
\end{Theorem}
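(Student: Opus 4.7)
The plan is to translate the Hilbert-Kunz computation for the two-dimensional graded hypersurface $R_t=K_t[x,y,z]/(P_t)$ into a question about Frobenius iterates of the rank-two syzygy bundle $\cS_t=\mathop{\rm Syz}(x,y,z)$ on the projective quartic $C_t=\{P_t=0\}\subset\PP^2_{K_t}$, and then to identify the Artin--Schreier obstruction that governs when this bundle ceases to be strongly semistable. I will follow the Brenner--Trivedi circle of ideas; Monsky's original recursive argument fits the same template but without the geometric language.

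First I would verify that, for every parameter $t$, the curve $C_t$ is smooth (a direct Jacobian computation in characteristic $2$), has genus $3$, and carries the defining short exact sequence
\eq{synseq}{0\to \cS_t\to \cO_{C_t}(-1)^{\oplus 3}\to \cO_{C_t}\to 0,}
so $\cS_t$ has rank $2$ and $\deg \cS_t = -12$. A standard computation (going back to Han--Monsky and recast by Brenner) expresses the Hilbert--Kunz multiplicity of the graded hypersurface as
$$
e_{HK}(R_t)=\frac{3\deg C_t}{4}+\lim_{n\to\infty}\frac{1}{4^{n+1}\deg C_t}\bigl(\mu_{\max}((F^n)^*\cS_t)-\mu_{\min}((F^n)^*\cS_t)\bigr)^2,
$$
so $e_{HK}(R_t)=3$ as long as every Frobenius pullback $(F^n)^*\cS_t$ remains semistable, with a correction of size $4^{-m}$ appearing exactly when $(F^m)^*\cS_t$ is the first pullback that breaks semistability by a line subbundle of maximal-possible destabilizing degree $\tfrac12$.

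Next I would pin down when such a destabilizing line subbundle $\cL\hookrightarrow (F^m)^*\cS_t$ exists. Pulling back \eqref{synseq} by $F^m$ and twisting, a line subbundle of $(F^m)^*\cS_t$ of slope $\mu_{\min}((F^m)^*\cS_t)+\tfrac12$ corresponds, via the global section it produces in $(F^m)^*\cS_t^\vee\otimes\cL^{-1}$, to a triple $(a,b,c)$ of forms satisfying $a x^{2^m}+b y^{2^m}+c z^{2^m}\equiv 0$ modulo $P_t$ of minimal possible degree. A Grobner-style reduction, using only the shape of $P_t=z^4+xyz^2+x^3z+y^3z+tx^2y^2$, converts this congruence into the requirement that a certain explicit element of $K_t$ lie in the image of the $\FF_2$-linear map $u\mapsto u^2+u$ iterated $m$ times. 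This is where the parameter $t=\lambda^2+\lambda$ and the degree of $\lambda$ over $\FF_2$ enters: writing the solubility condition as a tower of Artin--Schreier equations shows the minimal $m$ for which the reduction succeeds is precisely $m(t)$, and no such $m$ exists when $t$ is transcendental.

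Finally, I would combine the two pieces: for algebraic $t$ with $t=\lambda^2+\lambda$, $\deg_{\FF_2}\lambda=m$, the syzygy bundle is strongly semistable up to the $(m-1)$-st Frobenius pullback and destabilizes sharply at the $m$-th, giving a HN-discrepancy of $(\mu_{\max}-\mu_{\min})=1$ on $(F^m)^*\cS_t$ and hence a contribution $4^{-m}$ in the Brenner formula; for transcendental $t$, the bundle is strongly semistable and the correction vanishes (interpreted as $4^{-\infty}=0$). The main obstacle I expect is the explicit book-keeping in the second step: the polynomial $P_t$ is carefully engineered so that each application of $F^*$ produces a new instance of the same type of Artin--Schreier congruence, and matching the recursion precisely with $u\mapsto u^2+u$ (rather than with some disguised variant) is what forces $m(t)$ to equal the $\FF_2$-degree of $\lambda$. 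This recursive identification is the heart of Monsky's computation and is the only non-formal step in the argument.
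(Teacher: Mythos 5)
The paper does not prove this statement at all: it is quoted verbatim from Monsky \cite[Theorem]{Mon1}, whose argument is a direct algebraic computation of the colengths $\ell(R_t/(x^q,y^q,z^q))$ by a Han--Monsky-style recursion, with no curves or bundles in sight. Your route--translating $e_{HK}$ into the strong Harder--Narasimhan data of the syzygy bundle on $C_t$ via Brenner and Trivedi--is exactly the translation the paper performs \emph{after} citing Monsky, in order to extract the semistability consequence; so as a proof strategy it is legitimate and genuinely different from Monsky's, but what you have written is an outline, not a proof. You explicitly defer the entire arithmetic core (the reduction of the destabilization condition to an iterated Artin--Schreier equation and the identification of the minimal level with $\deg_{\FF_2}\lambda$), calling it ``the only non-formal step''; that step is the theorem. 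Nothing in your sketch verifies it.

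Two concrete errors in the parts you do assert. First, $C_t$ is not smooth for every $t$: at $t=0$ one has $P_0=z(z^3+xyz+x^3+y^3)$, a reducible quartic (the paper itself flags ``the singular fiber over $0\in S$''), yet $m(0)=1$ and the formula $e_{HK}(R_0)=3+\tfrac14$ must still be proved; the smooth-curve machinery you invoke does not apply there without a separate argument. Second, your numerology is inconsistent with your own formula: since $\deg((F^m)^*\cS_t)=-12\cdot 2^m$ and a destabilizing line subbundle has integer degree, the discrepancy from the average slope is an integer (so ``slope $\mu_{\min}+\tfrac12$'' is impossible), and plugging a slope gap of $1$ at level $m$ into your displayed limit yields a correction of $4^{-m}/16$, not $4^{-m}$; to recover $3+4^{-m}$ the gap at level $m$ must be $4$, i.e.\ the destabilizing subbundle must exceed the average slope by exactly $2$. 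Getting that precise degree, and showing it first occurs at level $m(t)$, is the book-keeping you have skipped.
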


Now let $k=\overline{\FF}_2$ and let us  set $S=\AA^1_k$ with
coordinate $t$ and $\PP^2_k$ with homogeneous coordinates
$[x:y:z]$. Let $Y\subset \PP^2\times _k S$ be given by
$$z^4+xyz^2+ x^3z+y^3z+t x^2y^2=0$$
and let $\cE=p_1^*\Omega_{\PP^2}$, where $p_1:Y\to \PP^2$ is the
canonical projection. Consider the projection $p_2: Y\to S$. Then
$\cE _s$ is not strongly semistable for every closed point $s\in
S$ (even on the singular fiber over $0\in S$) but $\cE_{\eta}$
is strongly semistable for the generic point $\eta\in S$.
This follows from Monsky's theorem and the computation of the Hilbert-Kunz
multiplicity of $R_t$ in terms of strong Harder--Narasimhan filtration of
bundles $\cE_s$ for $s\in S$ due to Brenner \cite[Theorem 1]{Br} and Trivedi
\cite[Theorem 5.3]{Tr}. This computation implies that $\cE_s$ is strongly semistable
for $s: \Spec K_t\to S$ if and only if the Hilbert-Kunz multiplicity of $R_t$
is equal to $3$.

Let $\cX$ be the projectivization of $\cF= p_1^*(S^2\Omega_{\PP^2}
(1))$ over $Y$. Let $\cL=\cO _{\PP (\cF)}(1)$ and let $\pi: \cX\to
S$ be the composition of the projections $\cX\to Y$ and $p_2:Y\to
S$. Then $\cL_{\overline \eta}$ is nef for a generic geometric
point $\overline\eta \in S $ but $\cL _{{s}}$ is not nef for every
closed geometric point $s\in S$.

One can also show a similar example in equal characteristic $3$
(see \cite{Mon2}).
\end{Example}

\medskip
Note that in the above example $S$ was defined over an algebraic
closure of a finite field. It seems to be unknown if similar
examples can occur for $S$ defined over a countable field of
positive characteristic containing transcendental elements over
its prime field, or even in case $S$ is defined over
$\overline{\QQ}$. One might expect that the strange behavior of
variation of nefness in positive equal characteristic cannot occur
in mixed characteristic:\footnote{Recently the author constructed a counterexample to this conjecture (see \cite{La-Adv}).
But the conjecture can still be true under appropriate assumptions, e.g., if we require that
the rank of the Neron-Severi group stays the same on the fibers of $\pi$.
}

\begin{Conjecture} \label{anty-Totaro}
Let $R$ be a finitely generated integral domain over $\ZZ$,
containing $\ZZ$. Let $\pi : {\cX}\to S=\Spec R$ be a smooth
proper morphism.  Let $\cL$ be an invertible sheaf of $\cO
_{\cX}$-modules and assume that the restriction of $\cL$ to the
generic geometric fibre of $\pi$ is nef. Then the set $T$ of
closed points $s\in S$ such that $\cL _s$ is semiample is dense in
$S$.
\end{Conjecture}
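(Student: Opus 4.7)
The plan is to prove the conjecture by induction on $d=\dim\cX_K$, where $\cX_K$ denotes the generic geometric fibre. The base case $d=1$ already illustrates the mechanism: nefness of $\cL_K$ on the curve $\cX_K$ simply means $\deg\cL_K\ge 0$, and if $\deg\cL_K>0$ then $\cL_K$ is ample and Theorem~\ref{ampleness-is-open} yields a dense open $U\subset S$ on which every $\cL_s$ is ample. If $\deg\cL_K=0$, then $\cL_s$ has degree zero on every closed fibre $\cX_s$ (a curve over a finite field), and after base change to $\bar\FF_p$ Proposition~\ref{torsion-lb} forces $\cL_s$ to be torsion, hence semiample. Either way every closed point of $S$ ends up in $T$.

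For the inductive step, assume the conjecture in all dimensions less than $d$ and shrink $S$ to a dense open so that $\pi$ is flat with geometrically integral fibres. If $\cL_K$ is ample we are done by Theorem~\ref{ampleness-is-open}; otherwise let $Z_K\subset\cX_K$ be the reduced closed subscheme given by the closure of the union of all subvarieties $V\subset\cX_K$ with $(\cL_K)^{\dim V}\cdot V=0$. Spread $Z_K$ out to a closed subscheme $Z\subset\cX$ over a dense open $U\subset S$ and apply the induction hypothesis to the family $Z\to U$ with the restriction of $\cL$: this produces a dense set of closed $s\in U$ at which $\cL_s|_{Z_s}$ is semiample. Since intersection numbers are locally constant in flat families, $Z_s\subseteq(\cL_s)^\perp$ on every such fibre, where $(\cL_s)^\perp$ denotes Keel's perpendicular locus on $\cX_s$. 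If in addition $Z_s=(\cL_s)^\perp$, then Keel's Theorem~\ref{Keel}---which applies because $\cX_s$ has positive characteristic---gives that $\cL_s$ itself is semiample.

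The main obstacle is therefore to exclude, on a Zariski-dense set of closed points $s$, the strict inclusion $(\cL_s)^\perp\supsetneq Z_s$, i.e.\ the appearance on $\cX_s$ of new subvarieties with zero top intersection against $\cL_s$ that are not specializations from the generic fibre. The Monsky--Brenner--Trivedi example shows that such jumps really do occur on individual fibres in equal characteristic, so no purely formal argument can rule them out; the conjecture predicts that mixed characteristic nevertheless forces the jump locus to be suitably thin. A natural strategy is to combine constructibility of the relative Hilbert scheme of subvarieties of $\cX/S$ with positive-characteristic inputs such as the higher-rank strengthening of Proposition~\ref{torsion-lb} for semistable sheaves with trivial Chern classes (the ingredient that underlies Theorem~\ref{surface-vanishing}), in order to show that the jump is absent off a thin set of closed points. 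An additional difficulty is that, by Corollary~\ref{surface-reduction-of-strictly-nef}, already the surface case of the conjecture implies Conjecture~\ref{nef-effective} on the closed fibres over $\bar\FF_p$, so any complete proof must also supply genuinely new input for effectivity of nef line bundles in characteristic~$p$.
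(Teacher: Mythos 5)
The statement you are addressing is Conjecture \ref{anty-Totaro}: the paper offers no proof of it and presents it explicitly as an open problem (it generalizes Miyaoka's problem for projectivized rank $2$ bundles on curves, and Example \ref{EST-example} shows the set $T$ need not even be open in the closed points of $S$). So there is no argument in the paper to compare yours against, and your proposal has to stand on its own. It does not: it is a strategy outline whose essential step is left open, as you yourself acknowledge. The base case $d=1$ is fine (and is exactly the content of Proposition \ref{torsion-lb} together with openness of ampleness), but everything beyond it is incomplete.

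Two concrete gaps. First, the induction is not even formally available: the conjecture as stated assumes $\pi$ smooth and proper, whereas the spread-out exceptional locus $Z\to U$ of $Z_K=(\cL_K)^{\perp}$ will in general be singular, reducible and of varying fibre dimension, so the inductive hypothesis does not apply to the family $Z\to U$; you would need a strictly stronger statement for arbitrary proper families (or a resolution-plus-descent argument for semiampleness) before the induction can run. Second, and decisively, the step you label the ``main obstacle'' --- showing that $(\cL_s)^{\perp}=Z_s$ for a dense set of closed points $s$, i.e.\ that no new $\cL_s$-trivial subvarieties appear on the closed fibres --- is precisely where all of the arithmetic content of the conjecture lives, and nothing in your proposal addresses it. Without that equality Keel's Theorem \ref{Keel} gives nothing, since it requires semiampleness of $\cL_s$ on all of $(\cL_s)^{\perp}$ and not merely on $Z_s$; and as you correctly note, already the surface case forces one to settle effectivity questions for nef line bundles over $\bar\FF_p$ in the spirit of Conjecture \ref{nef-effective}, which is open. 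Your reduction of the problem to controlling the jumping of $L^{\perp}$ in mixed characteristic is a sensible way to organize one's thinking about the conjecture, but it is a reformulation of the difficulty, not a proof.
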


Totaro's Example \ref{Tot-example} comes from characteristic zero
by reduction modulo $p$. The above conjecture suggests that such
examples are rather rare and almost all reductions of a fixed nef
line bundle are semiample.

Conjecture \ref{anty-Totaro} generalizes \cite[Problem 5.4]{Mi}
which considers the same question
in case $\cX$ is a projectivization of a rank $2$ vector bundle
over a curve (in this case if $s\in S$ is a closed point then
nefness of $\cL_s$ implies its semiampleness by the Lange--Stuhler
theorem; see Proposition \ref{La-St}).

Note that one can show examples in which the set $T$ is not open
in the set of closed points of $S$. The first such examples come
from an unpublished work \cite{EST} of Ekedahl, Shepherd--Barron
and Taylor:

\begin{Example} \label{EST-example}
Consider $X$ from Example \ref{Shimura}. The line subbundle
$L^{-1}\subset T_X\simeq L^{-1}\oplus M^{-1}$ defines a foliation.
If we reduce $X$ modulo some prime of characteristic $p$ then the
$p$-curvature map $L_p^{\otimes (-p)}=F^*(L_p^{-1})\to
T_{X_{p}}/L_p^{-1}=M_p^{-1}$, given by taking the $p$-th power of
a derivation, is $\cO_{X_{p}}$-linear. If $p$ is inert in $F$ then
this map is non-zero (see \cite[p.~23]{EST}). In this case we get
a section of $L_p^{\otimes p}\otimes M_p^{-1}$ and, similarly, we
get a section of $M_p^{\otimes p}\otimes L_p^{-1}$. Note that
$L_p$  is not nef (and hence it is not semiample). Otherwise, we
would have $-LM=L_p(pL_p-M_p)\ge 0$, whereas $LM>0$. Since $L_p$
is pseudoeffective and  $L_p^2=0$, existence of the Zariski
decomposition of $L_p$ implies that $L_p$ is big (see proof of
Corollary \ref{surface-reduction-of-strictly-nef}). Let us recall
that by Chebotarev's density theorem the number of rational primes
$p$ which remain inert in $F$ is infinite (of Dirichlet density
$1/2$). So in this case we have a strictly nef line bundle $L$ for
which infinitely many reductions are not semiample.

In fact, it is not clear how to prove that in the remaining cases
the reduction of $L$ is semiample (possibly apart from finitely
many primes).
\end{Example}

\medskip

Other examples of a similar type were obtained by Brenner
\cite{Br1} in case $\cX$ is a projectivization of a rank $2$
vector bundle over a curve (note that these examples did not solve
Miyaoka's problem \cite[Problem 5.4]{Mi}).

\section{Variation of semistability of  vector bundles} \label{Section:variation}

Let $X$ be a smooth complex projective variety and let $\cO _X(1)$
be an ample line bundle on $X$. Let $E$ be a slope semistable
(with respect to $\cO _X(1)$) locally free $\cO_X$-module.

We are interested in behavior of $E$ when taking reduction modulo
$p$. More precisely, all of the above data can be described by a
finite number of equations. Therefore there exist a subring
$R\subset \CC$, finitely generated as an algebra over $\ZZ$, and a
triple $(\cX, \cO_{\cX}(1), \cE)$ consisting of a smooth
projective $R$-scheme $\pi : \cX\to S=\Spec R$, an $R$-ample line
bundle $\cO_{\cX}(1)$ and a family $\cE$ of locally free slope
semistable sheaves on the fibers of $\pi$, such that on the fiber
over the generic geometric point $\Spec \CC\to S$ we recover the
triple $(X,\cO_X(1),E)$. Note that we have implicitly used
openness of slope semistability in flat families of sheaves.

Let us recall that for every maximal ideal $m\subset R$ the
residue field $k=R/m$ is finite of characteristic $p>0$. Now we
would like to relate various properties of $E$ to the behavior of
its reductions modulo $p$. We pose a series of conjectures that
should completely describe the behavior of strong semistability in
mixed characteristic. The first conjecture is motivated by
\cite{SB}, where it was proven in the rank $2$ case:

\begin{Conjecture}\label{Conjecture-SB}
Let $\Sigma^{nss}$ be the set of closed points $s\in S$ such that
$\cE_s$ is not strongly slope semistable. If $\Sigma^{nss}$ is
infinite \footnote{This assumption is tentative and works well only in the number field case. In general,
it should probably be modified so that the set $\Sigma^{nss}$
is dense in S. (Unfortunately, in the published version this footnote was misplaced.)}
then $\End E$ is a numerically flat vector bundle.
Moreover, $\End E$ is not \'etale trivializable.
\end{Conjecture}

\medskip

\begin{Lemma}
If $\Sigma^{nss}$ is infinite then $\End E$ is not \'etale
trivializable. In particular, Conjecture \ref{Conjecture-SB} is
true in the curve case.
\end{Lemma}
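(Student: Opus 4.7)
I would prove the contrapositive: if $\End E$ is étale trivializable, then $\cE_s$ is strongly slope semistable for all but finitely many closed points $s\in S$. The core of the argument is that étale triviality of $\End E$ forces $E$, after pulling back by the étale cover, to be a line-bundle twist of a trivial bundle, after which strong semistability is transparent.

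First I would make a Morita-type observation. Let $f\colon Y\to X$ be a finite étale cover (with $Y$ connected) satisfying $f^{*}\End E \cong \cO_{Y}^{\oplus r^{2}}$, where $r=\rk E$. Since $Y$ is connected and projective, the $\cO_{Y}$-algebra structure on $\End(f^{*}E)=f^{*}\End E$ is specified by global constants, so this sheaf is isomorphic to the constant sheaf of matrix algebras $M_{r}(\CC)\otimes_{\CC}\cO_{Y}$. Morita equivalence then forces $f^{*}E \cong N\otimes \cO_{Y}^{\oplus r}$ for some line bundle $N$ on $Y$.

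Next I would spread this out. After possibly enlarging $R$ and restricting to a dense open $S_{0}\subset S$, we may extend $f$ to a finite étale cover $g\colon \mathcal{Y}\to \cX|_{S_{0}}$ and extend the decomposition to $g^{*}\cE\cong \mathcal{N}\otimes \cO_{\mathcal{Y}}^{\oplus r}$. For each closed point $s\in S_{0}$ of residue characteristic $p$ and each $n\geq 0$, since Frobenius commutes with $g$,
\[
(F_{\mathcal{Y}_{s}}^{n})^{*}g_{s}^{*}\cE_{s}\;\cong\;\mathcal{N}_{s}^{\otimes p^{n}}\otimes \cO_{\mathcal{Y}_{s}}^{\oplus r}
\]
is a direct sum of copies of one line bundle, hence slope semistable with respect to the pulled-back polarization. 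Thus $g_{s}^{*}\cE_{s}$ is strongly slope semistable; since $g_{s}$ is finite surjective, a destabilizing subsheaf of any $(F_{\cX_s}^n)^{*}\cE_s$ would pull back to a destabilizer upstairs, so $\cE_{s}$ is itself strongly slope semistable and $s\notin \Sigma^{nss}$. This shows $\Sigma^{nss}\subset S\setminus S_{0}$.

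For the "in particular" clause, on a curve the slope semistability of $E$ over $\CC$ implies by Ramanan--Ramanathan that $\End E$ is slope semistable of slope zero, and any such bundle on a curve is numerically flat; thus the numerical-flatness half of Conjecture \ref{Conjecture-SB} is automatic for curves, and what remains is exactly what the lemma supplies. The main obstacle I anticipate is the finiteness assertion: the spreading-out argument gives only $\Sigma^{nss}\subset S\setminus S_0$, which is finite in the arithmetically one-dimensional setting but requires additional care when $\dim S>1$; this is the only non-formal input needed beyond the Morita core.
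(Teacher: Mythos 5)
Your proof is correct, but it takes a genuinely different route from the paper's at the key step. The paper never extracts the structure $f^*E\cong N^{\oplus r}$: it spreads out the \'etale trivialization of $\End E$ to an open subset $U\subset S$, observes that \'etale trivializable bundles are strongly semistable (Frobenius pull-backs of such bundles are again \'etale trivializable), and then passes from strong semistability of $\End\cE_s$ to that of $\cE_s$ by the standard slope computation: a destabilizing subsheaf $E'$ of $(F^n)^*\cE_s$ gives $\mu(E'\otimes (F^n)^*\cE_s^*)=\mu(E')+\mu((F^n)^*\cE_s^*)>0$, so $E'\otimes(F^n)^*\cE_s^*$ destabilizes $(F^n)^*(\End\cE_s)$. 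Your Morita step --- rigidifying the algebra structure on $f^*\End E=\End(f^*E)$ via $H^0(Y,\cO_Y)=\CC$, identifying it with the constant matrix algebra, and splitting off a line bundle --- is valid and buys strictly more: it shows $\PP(E)$ is trivialized by a finite \'etale cover, after which strong semistability of every reduction is visible by inspection. The cost is that you must spread out both the cover and the decomposition $g^*\cE\cong\mathcal{N}\otimes\cO_{\mathcal{Y}}^{\oplus r}$, whereas the paper only needs openness of the \'etale trivialization plus a two-line tensor estimate. The caveat you flag at the end is real but applies verbatim to the paper's own proof, which likewise concludes only that $\Sigma^{nss}$ lies in the closed points of $S\setminus U$ and then asserts finiteness; this is automatic only when $\dim S=1$, and otherwise one must either read ``finite'' as ``contained in a proper closed subset'' or replace $R$ by a suitable localization --- so you are not missing anything the paper supplies. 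Your treatment of the curve case (semistability of $\End E$ of degree $0$, hence numerical flatness) coincides with the paper's.
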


\begin{proof}
If $\End E$ is \'etale trivializable then $\End \cE$ is \'etale
trivializable over $\cX _U$ for some open subset $U\subset S$. In
particular, $\End {\cE}_s$, is strongly semistable for $s\in U$.
We claim that $\cE_s$ is also strongly semistable. If $\cE_s$ is
not strongly semistable then there exists some $n$ such that the
$n$th Frobenius pull back of $\cE_s$ is destabilized by some
subsheaf $E'$. But then
$$\mu (E'\otimes (F^n)^* \cE_s^*)= \mu (E')+
\mu ((F^n)^* \cE_s^*)>\mu ((F^n)^* \cE_s)+\mu ((F^n)^*
\cE_s^*)=0$$ and hence $E'\otimes (F^n)^* \cE^*_s$ destabilizes
$(F^n)^*(\End {\cE}_s)$, a contradiction. This implies that
$\Sigma^{nss}$ is contained in the set of closed points of $S-U$,
and therefore $\Sigma^{nss}$ is finite.

If $X$ is a curve then for every semistable $E$ the bundle $\End
E$ is semistable of degree $0$, so it is numerically flat and the
conjecture follows from the first part of the lemma.
\end{proof}

\medskip

This shows that Conjecture \ref{Conjecture-SB} is of interest only
in the surface case and the only non-trivial part of the
conjecture is that $\End E$ is numerically flat. Indeed, the
higher dimensional case can be easily reduced to the surface case
by means of restriction theorems. More precisely, if $X$ has
dimension $d$ greater than $2$ and $E$ is a vector bundle for
which $\Sigma^{nss}$ is infinite then the restriction of $E$ to a
general complete intersection surface $Y\subset X$ is semistable
and it satisfies the assumptions of the conjecture. So if we know
the conjecture for $E|_Y$ then $\End E|_Y$ is a numerically flat
vector bundle. But then  $\End E$ is also numerically flat because
it is semistable with respect to some ample polarization $H$ such
that $c_1(E)H^{d-1}=c_2(E)H^{d-2}=0$ (cf. \cite[Theorem 2]{Si}).

\section{Arithmetic of numerically flat vector bundles} \label{Section:arithmetic-of-flat-bundles}

Conjecture \ref{Conjecture-SB} implies that to study strong
semistability of reductions of a complex vector bundle, it is
sufficient to study reductions of numerically flat vector bundles.
The following subsection recalls a special role of such vector
bundles and their relation to representations of the fundamental
group.

\subsection{Flat bundles} \label{flat-bundles}
Let $X$ be a smooth complex projective variety. Giving a
representation of the topological fundamental group $\pi_1(X,x)$
on a complex vector space $V_x$ is equivalent to giving a complex
local system $V$ (a sheaf of complex vector spaces locally
isomorphic to the constant sheaf $\underline \CC ^n$, $n\in \NN$).
Given a local system we can recover the corresponding
representation as the \emph{monodromy representation}.

Given $V$ we can construct a holomorphic vector bundle
$\cO_X\otimes _{\CC}V$ with (holomorphic) integrable connection
$\nabla$ such that $\nabla (fv)=df\cdot v$, where $f$ is a local
section of $\cO _X$ and $v$ is a local section of $V$. On the
other hand, given a holomorphic vector bundle $\cE$ with
integrable connection $\nabla$ we can recover a local system $V$
as a sheaf of local sections $v$ of $\cE$ for which $\nabla
(v)=0$. This constructions provide functors giving an equivalence
of categories of complex local systems and holomorphic vector
bundles with integrable connection.

In \cite[Corollary 3.10]{Si} Simpson proved that these categories
are equivalent to the category of (Higgs) semistable Higgs bundles
$(E, \theta)$ with vanishing (rational) Chern classes. This
category contains the category of semistable vector bundles with
vanishing Chern classes. If a representation of $\pi_1(X,x)$ is an
extension of unitary representations, then the corresponding Higgs
bundle is an extension of stable vector bundles and the
equivalence preserves the holomorphic structure. In particular,
every semistable vector bundle with vanishing Chern classes has a
holomorphic flat structure which is an extension of unitary flat
bundles. Finally, let us recall that a vector bundle is semistable
with vanishing Chern classes if and only if it is numerically
flat.

\medskip

We also need to recall a few basic results about \'etale
trivializable bundles.

\subsection{\'Etale trivializable bundles}
Let $X$ be a smooth projective variety over an algebraically
closed field $k$. A  rank $r$ locally free sheaf $E$ on $X$ is
called \emph{\'etale trivializable} if there exists a finite
\'etale covering $\pi: Y\to X$ such that $\pi^*E\simeq \cO _Y^r$.
Over finite fields \'etale trivializable bundles are characterized
as Frobenius periodic bundles:

\begin{Proposition} \emph{(see \cite{LS})} \label{La-St}
Assume that $k=\bar \FF _p$ and let $F: X\to X$ be the Frobenius
morphism. A locally free sheaf $E$ is \'etale trivializable if and
only if there exists an isomorphism $(F_X^n)^*E\simeq E$ for some
positive integer $n$.
\end{Proposition}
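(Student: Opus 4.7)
The plan is to treat the two directions by Galois descent and Lang--Steinberg fixed-point theory, respectively.

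For the forward direction, let $\pi: Y \to X$ be a finite \'etale cover with $\pi^* E \simeq \cO_Y^r$; passing to the Galois closure, I may assume $Y$ is connected with Galois group $G = \mathrm{Aut}(Y/X)$. Descent identifies $E$ with a representation $\rho: G \to \GL_r(H^0(Y, \cO_Y)) = \GL_r(\bar\FF_p)$, using that $Y$ is proper and connected over $\bar\FF_p$. Since $G$ is finite, $\rho(G) \subset \GL_r(\FF_{p^n})$ for some $n$. From the commutation $F_X \circ \pi = \pi \circ F_Y$ of absolute Frobenii, pullback by $F_X$ transports under descent to the Frobenius twist $\rho \mapsto \rho^{(p)}$, i.e.\ raising each matrix entry to the $p$-th power. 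Iterating, $(F_X^n)^* E$ corresponds to $\rho^{(p^n)} = \rho$ (since $\rho$ takes $\FF_{p^n}$-values), yielding $(F_X^n)^* E \simeq E$.

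For the reverse direction, given an isomorphism $\sigma: (F_X^n)^* E \xrightarrow{\sim} E$, I form the frame bundle $\mathcal{Y} \to X$ parameterizing $\cO_X$-linear isomorphisms $\cO_X^r \to E$; this is a right $\GL_r$-torsor. Define $T: \mathcal{Y} \to \mathcal{Y}$ by $\phi \mapsto \sigma \circ (F_X^n)^* \phi$, which satisfies the semilinearity $T(\phi \cdot g) = T(\phi) \cdot g^{(p^n)}$ for $g \in \GL_r$. Let $Z \subset \mathcal{Y}$ be the closed subscheme of $T$-fixed points. Working locally on a trivialization of $E$, the defining equation of $Z$ takes the form $g = A(x) \cdot g^{(p^n)}$ for some matrix-valued function $A$; its Jacobian in the $g$ variables is the identity because $d(F^n) = 0$ in characteristic $p$, so $Z \to X$ is \'etale. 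Lang's theorem applied to $\GL_r$ over $\bar\FF_p$ shows that the Lang map $g \mapsto g \cdot (g^{(p^n)})^{-1}$ is surjective with fibres of size $|\GL_r(\FF_{p^n})|$, whence $Z \to X$ is finite and surjective. By construction the pullback of $E$ to $Z$ is tautologically trivialized by the universal frame, so $Z \to X$ is the desired \'etale trivialization.

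The main obstacle is the backward direction: showing that the $T$-fixed locus of the frame bundle really is a finite \'etale cover. \'Etaleness comes cheaply from $dF^n = 0$, but finite surjectivity requires Lang's theorem for $\GL_r$, applied pointwise along the fibres over $X$. The forward direction is then essentially bookkeeping once one observes that the image of any finite subgroup of $\GL_r(\bar\FF_p)$ is automatically contained in $\GL_r(\FF_{p^n})$ for some $n$.
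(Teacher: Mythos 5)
The paper does not prove this statement at all --- it is quoted directly from Lange--Stuhler \cite{LS} --- so there is no in-text argument to compare against. What you have written is, in substance, a correct reconstruction of the original Lange--Stuhler proof. The forward direction is fine: Galois descent along a connected Galois closure identifies $E$ with a representation of the covering group in $\GL _r(H^0(Y,\cO_Y))=\GL _r(\bar\FF_p)$ (using that $Y$ is connected and projective over an algebraically closed field), a finite subgroup of $\GL _r(\bar\FF_p)$ lands in some $\GL _r(\FF_{p^n})$, and Frobenius pullback corresponds to the entrywise $p$-th power twist of the cocycle, so $(F_X^n)^*E\simeq E$. The backward direction via the fixed locus $Z$ of the $\sigma$-twisted Frobenius on the frame bundle is also the right mechanism, and the \'etaleness argument from $dF^n=0$ is correct. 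The one step you should tighten is the jump from ``the Lang map is surjective with fibres of size $|\GL _r(\FF_{p^n})|$'' to ``$Z\to X$ is finite'': surjectivity plus constant geometric fibre cardinality of a separated quasi-finite \'etale map does give finiteness, but that implication deserves a word. The clean way to close it is to observe that the finite constant group $\Gamma=\GL _r(\FF_{p^n})$ acts on $Z$ over $X$ by $\phi\mapsto \phi\circ h$, freely and simply transitively on every geometric fibre (Lang--Steinberg applies to the $p^n$-power endomorphism of $\GL _r$ over \emph{any} algebraically closed field of characteristic $p$, which is needed at non-closed points of $X$ as well), so that $Z\times_X\Gamma\to Z\times_XZ$ is an isomorphism and $Z\to X$ is a $\Gamma$-torsor, hence finite \'etale. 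With that remark added, your argument is complete.
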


It is easy to see that every \'etale trivializable bundle is
numerically flat. So we can try to characterize such bundles for
$k=\CC$ in terms of their monodromy representation. If we have a
representation $\rho: \pi_1(X,x) \to \GL _r(\CC)$ whose image $G$
is a finite group then by Weyl's trick $G$ is a unitary subgroup
of $\GL _r(\CC)$. Since every complex representation of a finite
group is a direct sum of irreducible representations, the
corresponding Higgs bundle $(E, \theta)$ is a direct sum of stable
vector bundles. Passing to the \'etale covering defined by the
quotient $\pi_1(X,x) \to G$ we see that each direct summand is
\'etale trivializable and the Higgs field $\theta=0$.

On the other hand, if a bundle is \'etale trivializable then it is
\'etale trivializable by a finite Galois covering and hence the
corresponding monodromy representation has finite image.

\subsection{\'Etale trivializability of reductions of numerically flat bundles}

We keep the notation from Section \ref{Section:variation} but now
we restrict to the case where $E$ is a numerically flat vector
bundle.

\begin{Conjecture}\label{Conjecture-et}
The set $\Sigma^{et}$ of closed points $s\in S$ such that $\cE_s$
is \'etale trivializable, is infinite.
\end{Conjecture}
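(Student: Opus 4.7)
The plan is to invoke the Simpson correspondence recalled in Section~\ref{flat-bundles}: since $E$ is numerically flat, it corresponds to a representation $\rho : \pi_1^{top}(X,x) \to \GL_r(\CC)$ which is an iterated extension of irreducible representations with image in compact unitary subgroups of $\GL_r(\CC)$. I would first reduce to the case when $E$ is stable and $\rho$ is an irreducible unitary representation, handling the extension steps separately by induction on length, using that the extension classes spread out and reduce as well.

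The second step is arithmetization: assuming that, after enlarging the base ring $R\subset\CC$ if necessary, the representation $\rho$ factors through $\GL_r(R)$, the argument would proceed as follows. For every maximal ideal $\mathfrak{m}\subset R$, i.e.\ for every closed point $s\in S$, the reduction $\bar\rho_s : \pi_1^{top}(X,x)\to \GL_r(R/\mathfrak{m})$ has image in the finite group $\GL_r(R/\mathfrak{m})$. The kernel of $\bar\rho_s$ is a finite-index subgroup of $\pi_1^{top}(X,x)$ and, by Riemann's existence theorem, defines an algebraic finite \'etale cover $Y_s\to X$; after shrinking $S$ this algebraizes and spreads out to a finite \'etale $R$-morphism $\cY\to\cX$. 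For each closed point $s\in S$, the fiber $\cY_s\to\cX_s$ is a finite \'etale cover and should trivialize $\cE_s$, because the pull-back of a finite-image representation by its kernel is trivial. By Proposition~\ref{La-St} this gives $(F_{\cX_s}^n)^*\cE_s\simeq\cE_s$ for some $n$, so $s\in\Sigma^{et}$; in fact $\Sigma^{et}$ would contain all sufficiently general closed points of $S$.

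The hard part will be the arithmetization step: a generic unitary representation of a non-abelian $\pi_1^{top}$ need not take values in $\GL_r(\bar\QQ)$, let alone in $\GL_r(R)$ for a finitely generated $\ZZ$-algebra $R$. Obtaining such a descent is closely related to Simpson's motivicity conjecture and to recent work of Esnault--Groechenig on rigid integral local systems, and is itself deep and unresolved. A secondary difficulty is to ensure that $\cY_s\to\cX_s$ actually trivializes $\cE_s$ and not merely its semisimplification, which requires a compatibility between the Simpson/Narasimhan--Seshadri correspondence in characteristic zero and the Lange--Stuhler correspondence in characteristic $p$, unknown in general. An alternative route would bypass the representation entirely and study Frobenius periodicity of the $S$-section $S\to M$ into the relative moduli space $M$ of semistable sheaves with vanishing Chern classes, showing that it meets the Frobenius-periodic locus of $M_s$ for infinitely many $s$; this reformulation appears equally hard.
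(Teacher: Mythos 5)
There is no proof in the paper to compare your attempt against: the statement is posed as an open conjecture, and the paper itself demonstrates that it is out of reach of any argument of the kind you sketch. Example~\ref{ordinary} shows that already for the rank-$(h^1(\cO_X)+1)$ unipotent extension of $H^1(\cO_X)\otimes\cO_X$ by $\cO_X$, Conjecture~\ref{Conjecture-et} is \emph{equivalent} to the existence of infinitely many closed points $s$ at which Frobenius acts bijectively on $H^1(\cO_{\cX_s})$, i.e.\ to the infinitude of places of ordinary reduction --- known for curves of genus $\le 2$ and open in general. Your reduction ``to the stable case, handling the extension steps by induction on length'' discards exactly this case: the hard content of the conjecture sits in the non-semisimple (unipotent) extensions, where the monodromy is not unitary with finite-image reductions and where étale trivializability of $\cE_s$ is controlled by the Frobenius action on coherent cohomology, not by any reduction of the complex monodromy representation.

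Beyond the arithmetization issue you yourself flag as unresolved, there are two further gaps. First, even granting that $\rho$ lands in $\GL_r(R)$, the cover $\cY_s\to\cX_s$ you construct kills the image of $\bar\rho_s$ in $\GL_r(R/\mathfrak{m})$, which is a mod-$\mathfrak{m}$ shadow of the \emph{complex} local system; there is no known mechanism relating this to the characteristic-$p$ vector bundle $\cE_s$, so the assertion that $\pi^*\cE_s$ is trivial is unsupported --- this is precisely the missing compatibility between the Simpson and Lange--Stuhler correspondences that you mention as a ``secondary difficulty,'' but it is in fact the heart of the matter. Second, your conclusion that $\Sigma^{et}$ contains all sufficiently general closed points proves too much and is false: for the bundle of Example~\ref{ordinary} over an elliptic curve over $\QQ$, the complement of $\Sigma^{et}$ consists of the supersingular primes, which by Elkies' theorem \cite{El} form an infinite set (this is exactly what Conjecture~\ref{Conjecture-net} predicts in general). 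Any viable approach must allow both $\Sigma^{et}$ and its complement to be infinite, which rules out an argument that works uniformly at all closed points of a dense open subset of $S$.
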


The following example shows that this conjecture is interesting
even for very simple semistable vector bundles:

\begin{Example}\label{ordinary}
Let $X$ be a smooth complex projective variety with
$h^1(X,\cO_X)>0$. Let us consider vector bundle $E$ corresponding
to the extension
$$0\to  \cO_X\to E\to H^1(\cO_X)\otimes\cO_X\to 0$$
defined by the identity $\id _{H^1(\cO_X)}\in \End
(H^1(\cO_X))=\Ext ^1(H^1(\cO_X)\otimes\cO_X, \cO_X)$. This is
clearly a numerically flat vector bundle.

For every finite \'etale morphism $\pi: Y\to X$ the map $\pi^*:
H^1(\cO_X)\to H^1(\cO_Y)$ is injective as it can be split by the
trace map. Let $E_Y$ be the extension corresponding to $\id
_{H^1(\cO_Y)}\in \End (H^1(\cO_Y))=\Ext ^1(H^1(\cO_Y)\otimes\cO_Y,
 \cO_Y)$ and consider the commutative diagram
$$
\xymatrix{ 0\ar[r]&\cO_Y \ar[r]\ar@{=}[d] &\pi^* E\ar[r]\ar[d]&
H^1(\cO_X)\otimes \cO_Y\ar[r]\ar[d]^{\pi^*\otimes
\id_{\cO_Y}}&0\\
0\ar[r]&  \cO_Y\ar[r]& E_Y\ar[r]& H^1(\cO_Y)\otimes\cO_Y\ar[r]& 0.\\
}$$ If $\pi^*E$ is trivial then it injects into $E_Y$ and hence
$E_Y$ has at least $\rk E=h^1(\cO_X)+1$ linearly independent
global sections. But by the definition of $E_Y$ the connecting map
$H^0(Y, H^1(\cO_Y)\otimes\cO_Y )\to H^1(Y, \cO _Y)$ is an
isomorphism and hence $h^0(E_Y)=1$. Therefore $E$ is not \'etale
trivializable.

Let $\cX\to S$ be a model of $X$ as in the beginning of Section
\ref{Section:variation}.

\begin{Lemma}
There exists a non-empty open subset $U\subset S$ such that the
reduction $\cE_s$ of $E$ for a closed point $s\in U$ is \'etale
trivializable if and only if the Frobenius morphism $F=F_{\cX_s}$
acts on $H^1(\cO_{\cX_s})$ bijectively.
\end{Lemma}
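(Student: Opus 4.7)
The plan is to translate étale trivializability of $\cE_s$, via Lange--Stuhler (Proposition \ref{La-St}), into a concrete condition on the Frobenius action on $H^1(\cO_{\cX_s})$. I would compute the extension class of $(F^n)^*\cE_s$ and then detect isomorphism with $\cE_s$ using the zeroth cohomology of the middle term.

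First I would shrink $S$ to a non-empty open $U$ on which $\pi$ is smooth, the Hodge number $h^1(\cO_{\cX_s})$ is constant (by semicontinuity and flatness) and equal to $h^1(\cO_X)$, and for every closed $s \in U$ the fiber $\cE_s$ fits into
$$0 \to \cO_{\cX_s} \to \cE_s \to V_s \otimes \cO_{\cX_s} \to 0, \qquad V_s := H^1(\cO_{\cX_s}),$$
with extension class $\id \in \End(V_s) = \Ext^1(V_s\otimes\cO_{\cX_s}, \cO_{\cX_s})$. Over the residue field at such an $s$ (an algebraic closure of a finite field), Proposition \ref{La-St} reduces étale trivializability of $\cE_s$ to the existence of $n \ge 1$ with $(F^n)^*\cE_s \simeq \cE_s$.

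Next I would identify the extension class $\psi_n \in \End(V_s)$ of $(F^n)^*\cE_s$. Applying $(F^n)^*$ to the defining sequence and using $(F^n)^*\cO_{\cX_s} = \cO_{\cX_s}$ yields another extension of $V_s\otimes \cO_{\cX_s}$ by $\cO_{\cX_s}$ whose class is the image of $\id$ under $(F^n)^*$ acting on the cohomological factor of $\Ext^1 = V_s^* \otimes H^1(\cO_{\cX_s})$. A direct computation in a fixed basis of $V_s$ shows that $\psi_n$ is the $k$-linear endomorphism whose matrix coincides with that of the semilinear map $(F^*)^n$ on $H^1(\cO_{\cX_s})$, so $\psi_n$ is invertible if and only if $F^*$ is bijective on $H^1(\cO_{\cX_s})$. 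I would then establish the linear-algebra fact: for $\varphi \in \End(V_s)$, the middle sheaf $\cE(\varphi)$ of the extension with class $\varphi$ is isomorphic to $\cE(\id)$ as an $\cO_{\cX_s}$-module if and only if $\varphi$ is bijective. If $\varphi$ is bijective, the pair $(\id_{\cO_{\cX_s}}, \varphi^{-1})$ on sub and quotient defines a morphism of extensions exhibiting the isomorphism. Conversely, the long exact sequence gives $h^0(\cE(\varphi)) = 1 + \dim \ker \varphi$, so $h^0(\cE(\varphi)) = h^0(\cE(\id)) = 1$ forces $\ker \varphi = 0$, whence $\varphi$ is bijective. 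Combining these steps yields the desired equivalence.

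The main obstacle I anticipate is the semilinear/linear bookkeeping in identifying $\psi_n$: the Frobenius acts on $V_s^* \otimes H^1(\cO_{\cX_s})$ only on the right factor and only semilinearly, so one must verify carefully that the resulting $k$-linear extension class has the same matrix as the semilinear $(F^*)^n$ and hence the same invertibility condition.
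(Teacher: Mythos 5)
Your argument is correct, and the first half coincides with the paper's: both use Lange--Stuhler to reduce \'etale trivializability to $(F^n)^*\cE_s\simeq \cE_s$, and both get the easy implication (Frobenius bijective on $H^1(\cO_{\cX_s})$ implies $F^*\cE_s\simeq\cE_s$) from a morphism of extensions that is the identity on the sub and invertible on the quotient. Where you genuinely diverge is the converse. The paper takes the Fitting decomposition $V=V_s\oplus V_n$ of $V=H^1(\cO_{\cX_s})$ under the semilinear Frobenius, builds the sub-extension $G\subset\cE_s$ with class the inclusion $V_n\hookrightarrow V$, notes that $(F^m)^*G$ is trivial for $m\gg 0$, and concludes $h^0((F^m)^*\cE_s)\ge \dim V_n+1$, contradicting $h^0(\cE_s)=1$ (which it gets by semicontinuity from $h^0(E)=1$). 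You instead compute the extension class $\psi_n$ of $(F^n)^*\cE_s$ directly and use the identity $h^0(\cE(\varphi))=1+\dim\ker\varphi$, coming from the identification of the connecting map $H^0(V\otimes\cO_{\cX_s})\to H^1(\cO_{\cX_s})$ with $\varphi$; this subsumes the paper's auxiliary bundle $G$ (whose only role is to manufacture the $\dim V_n$ extra sections) and yields the sharper statement $h^0((F^n)^*\cE_s)=1+\dim\ker (F^*)^n$ without waiting for $n\gg0$. The price is exactly the point you flag: the paper never linearizes the semilinear operator, while you must verify that the $k$-linear class $\psi_n$ has, in a fixed basis, the matrix of the semilinear $(F^*)^n$, so that invertibility of $\psi_n$ is equivalent to bijectivity of $F^*$ (true over the perfect residue field, since a semilinear map is bijective iff its matrix in one, hence any, basis is invertible). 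Two hypotheses should be stated as part of the choice of $U$: that the fibers are geometrically connected and reduced, so $h^0(\cO_{\cX_s})=1$, and that $R^1\pi_*\cO_{\cX}$ commutes with base change on $U$, so that the fiberwise extension class of $\cE_s$ really is $\id_{H^1(\cO_{\cX_s})}$; both are implicit in the paper as well.
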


\begin{proof}
If $F^*$ acts on $V=H^1(\cO_{\cX_s})$ bijectively then the diagram
$$
\xymatrix{ 0\ar[r]&\cO_{\cX_s} \ar[r]\ar@{=}[d] &F^*
\cE_s\ar[r]\ar[d]& V\otimes \cO_{\cX _s}\ar[r]\ar[d]^{F^*\otimes
\id_{\cO_{\cX _s}}}&0\\
0\ar[r]&  \cO_{\cX _s}\ar[r]& \cE_s\ar[r]& V\otimes\cO_{\cX _s}\ar[r]& 0\\
}$$ shows that $F^*\cE_s\simeq \cE_s$ and hence $\cE_s$ is \'etale
trivializable by the Lange--Stuhler theorem (see Proposition
\ref{La-St}).

Now assume that $\cE_s$ is \'etale trivializable. Let us consider
the unique decomposition $V=V_s\oplus V_n$ such that the Frobenius
morphism $F^*$ acts on $V_s$ as an automorphism and it is
nilpotent on $V_n$. Let $G$ be the bundle obtained as the
extension of  $V_n\otimes \cO_{\cX_s}$ by $\cO_{\cX_s}$ defined by
the canonical inclusion $(V_n\hookrightarrow V)\in \Hom (V_n,
V)=\Ext ^1(V_n\otimes\cO_{\cX_s}, \cO_{\cX_s})$. Then we have the
diagram
$$
\xymatrix{ 0\ar[r]& \cO_{\cX_s} \ar[r]\ar@{=}[d]& G\ar[r]\ar[d]&
V_n\otimes\cO_{\cX _s}\ar[r]\ar@{^{(}->}[d]&0\\
0\ar[r]&  \cO_{\cX _s}\ar[r]& \cE_s\ar[r]& V\otimes \cO_{\cX _s}\ar[r]& 0,\\
}$$ which shows that $G\hookrightarrow \cE_s$. By the definition
of $G$ there exists some $m_0$ such that $(F^{m_0})^*G$ is
trivial. Let $r=\dim V_n$. This shows that for every $m\ge m_0$ we
have
$$h^0((F^m)^*\cE_s)\ge h^0((F^m)^*G)=r+1.$$
By the Lange--Stuhler theorem we know that for some $m\ge m_0$ we
have $(F^m)^*\cE_s\simeq \cE_s$ and hence $h^0(\cE_s)\ge r+1$. By
the definition of  $E$ we know that the connecting map $\delta:
H^0(H^1(\cO_{X})\otimes \cO_{\cX _s})\to H^1(\cO_{X})$ is an
isomorphism and hence $h^0(E)=1$. Using semicontinuity of
cohomology, we see that there exists an open subset $U\subset S$
such that $h^0(\cE _s)=1$ for every $s\in U$. This implies that
for any closed $s\in U$ we have $r=0$ and $V=V_s$.
\end{proof}

Therefore Conjecture \ref{Conjecture-et} for vector bundle $E$ is
equivalent to the assertion that there are infinitely many closed
points $s\in S$ for which the Frobenius acts on $H^1(\cO_{\cX_s})$
bijectively. In the curve case this is equivalent to saying that
there are infinitely many places of ordinary reduction. This is
known in case of genus $g\le 2$ but it is still an open problem in
general.
\end{Example}

\medskip

\begin{Remark}
Note that if the reduction of $\cE _s$  is \'etale trivializable
by $\pi: Y\to \cX_s$ then the degree of $\pi$ is divisible by the
characteristic $p$ of the residue field $k(s)$. Indeed, if the
characteristic $p$ does not divide the degree of $\pi$ then
$\frac{1}{\deg \pi} \Tr _{\cX_s/Y}: \pi_*\cO_Y\to \cO_{\cX _s}$
splits the injection $\cO_{\cX_s}\to \pi_*\cO_Y$. Then the same
argument as in the characteristic zero case gives a contradiction.
\end{Remark}

\subsection{Analogue of the Grothendieck-Katz $p$-curvature conjecture}

In this subsection we try to relate \'etale trivializability of
reductions of a vector bundle to finiteness of the image of its
monodromy representation. Before formulating the corresponding
conjecture we provide its original motivation: the global case of
the Grothendieck--Katz conjecture.

Let $X$ be a smooth variety defined over a field of characteristic
$p>0$ and let $\nabla : E\to  \Omega_X \otimes E$  be an
integrable $k$-connection on a locally free $\cO _X$-module $E$.
In characteristic $p$, the $p$-th power $D^p$ of a derivation $D$
is again a derivation so we can consider $\nabla
(D^p)-\nabla(D)^p$. When this is zero for all local derivations
$D$ then we say that $\nabla$ has zero $p$-curvature. If $F_g:
X\to X^{(1)}$ is the geometric Frobenius morphism then $(E,
\nabla)$ is equivalent to giving a locally free
$\cO_{X^{(1)}}$-module $G$. The sheaf $G$ can be recovered from
$(E, \nabla)$ as a sheaf of local sections $v$ of $E$ for which
$\nabla (v)=0$. On the other hand, giving $G$ we can construct a
canonical connection on $E=F_g^*G$ by differentiating along the
fibers of $F_g$, i.e., we set $\nabla (f\otimes g)=df\otimes g$.

\begin{Conjecture} \emph{(Grothendieck--Katz, see \cite{Ka})}
Let $(E,\nabla)$ be a holomorphic vector bundle with an integrable
connection on a complex manifold $X$. Then $(E,\nabla)$ has a
finite monodromy group if and only if almost all its reductions to
positive characteristic have vanishing $p$-curvature.
\end{Conjecture}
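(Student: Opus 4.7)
The ``only if'' direction is the accessible one: if the monodromy of $(E, \nabla)$ is finite then the corresponding representation factors through a finite Galois quotient $\pi_1(X,x) \twoheadrightarrow G$, and pulling back along the finite \'etale cover $\pi: Y \to X$ determined by $G$ trivializes $(E, \nabla)$. The trivial connection has vanishing $p$-curvature for every prime $p$, and since the $p$-curvature is functorial under \'etale base change, $(E,\nabla)$ itself must have vanishing $p$-curvature on almost every reduction.

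For the converse, my plan is to spread $(X, E, \nabla)$ over a finitely generated $\ZZ$-algebra $R$ as in Section \ref{Section:variation}, obtaining a family of connections $(\cE_s, \nabla_s)$ on the closed fibers. Vanishing of the $p$-curvature at a closed point $s\in S$ is equivalent, by Cartier descent, to an isomorphism $\cE_s \simeq F_{\cX_s}^* \cG_s$ for some locally free $\cG_s$, and because $\cG_s$ inherits a canonical connection one may iterate this descent to produce an $F$-divided sheaf on $\cX_s$. By the work of dos Santos (and the theorem of Esnault--Mehta on Gieseker's conjecture in the projective case), such an object corresponds to a continuous representation of $\pi_1^{\et}(\cX_s)$, so each $\cE_s$ is at least trivializable along a pro-\'etale cover. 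The next step is to show that the cover can be taken finite, placing us in the setting of Conjecture \ref{Conjecture-et}, and then to patch these finite arithmetic representations at almost all primes into a single finite complex representation of $\pi_1(X,x)$.

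The main obstacle is precisely this last patching: a priori there is no uniform bound, as $s$ varies, on the order of the monodromy group of $\cE_s$, and a compatible system of finite groups at almost all primes need not arise from a single finite group in characteristic zero. Known partial results handle the solvable monodromy case (Katz), the case when the differential Galois group is algebraic (Bost's Arakelov-theoretic algebraicity criteria for leaves of foliations), and certain Diophantine situations due to Andr\'e and Chudnovsky, but no unified argument is known. A plausible attack, in the spirit of Bost, would combine the boundedness of the family of numerically flat vector bundles on $X$ (the same input used for Theorem \ref{surface-vanishing}) with transcendence estimates on the formal solutions of $\nabla$ to algebraize the monodromy representation into a subgroup of $\GL_r(\CC)$ whose identity component can then be shown to be trivial. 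I should be frank, however, that the Grothendieck--Katz conjecture is long-standing and open in this generality, so any such plan is aspirational rather than a complete proof.
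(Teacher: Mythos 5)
This statement is the Grothendieck--Katz $p$-curvature conjecture; the paper records it as a \emph{conjecture} precisely because it is a long-standing open problem, and it offers no proof, so there is nothing in the paper to compare your argument against. You are right to flag your plan as aspirational rather than a proof. For the record, your ``only if'' direction is correct and is essentially Katz's original observation in \cite{Ka}: the finite \'etale cover of $X$ trivializing $(E,\nabla)$ spreads out to finite \'etale covers of almost all closed fibers, the trivial connection has zero $p$-curvature, and since the $p$-curvature is compatible with \'etale pullback and the cover is faithfully flat, the $p$-curvature of the reduction of $(E,\nabla)$ itself must vanish.

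In the converse direction your sketch contains one concrete misstep worth isolating, beyond the patching problem you already acknowledge. Vanishing of the $p$-curvature of $\nabla_s$ gives, by Cartier descent, a single Frobenius descent $\cE_s\simeq F^*\cG_s$; but $\cG_s$ does not automatically come equipped with a connection of vanishing $p$-curvature, so you cannot iterate the descent to produce an $F$-divided sheaf from the hypothesis alone. The full $F$-divided structure --- and hence the appeal to Gieseker's conjecture as proved by Esnault and Mehta, or to dos Santos's Tannakian description --- requires vanishing of the $p$-curvature of all the iterated descended connections, which is a strictly stronger input than the hypothesis of the conjecture. Even granting that, the passage from a stratified (pro-\'etale) trivialization to a finite one, and the patching of finite monodromy data over varying $p$ into a single finite complex representation, are exactly where the difficulty lives; the known cases (Katz for Gauss--Manin connections, Andr\'e for solvable differential Galois groups, the Chudnovskys and Bost via transcendence and Arakelov-theoretic methods) circumvent rather than resolve this step. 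Within this paper the relevant surrogate statements on the Higgs/vector-bundle side are Conjectures \ref{Conjecture-et} and \ref{Conjecture-net}, which are likewise left open.
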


Note that if $X$ projective then $(E,\nabla)$ with finite
monodromy group corresponds via Simpson's correspondence described
in Subsection \ref{flat-bundles} to an \'etale trivializable
bundle (with zero Higgs field). So we can try to describe
representations of the fundamental group with finite image on the
Higgs bundle side in the following way:

\begin{Conjecture}\label{Conjecture-net}
In the notation of Section \ref{Section:variation} assume that $E$
is not \'etale trivializable. Then the set $\Sigma^{net}$ of
closed points $s\in S$ such that $\cE_s$ is not \'etale
trivializable, is infinite.
\end{Conjecture}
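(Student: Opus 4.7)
The plan is to work on the Simpson-correspondence side, translating \'etale trivializability into finiteness of monodromy, and then to attempt an arithmetic uniformity argument in the spirit of the Grothendieck--Katz conjecture. First, by Subsection \ref{flat-bundles}, after replacing $E$ by the associated graded of its Jordan--H\"older filtration (which preserves both numerical flatness and the property of being \'etale trivializable, since a common Galois cover can be chosen for all summands), we may assume that $E$ is polystable with vanishing Chern classes, hence corresponds under Simpson's correspondence to a semisimple representation $\rho: \pi_1(X,x) \to \GL_r(\CC)$ whose image lies in a unitary group $U(r)$. The hypothesis that $E$ is not \'etale trivializable then becomes: the image of $\rho$ is infinite.

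Suppose for contradiction that $\Sigma^{net}$ is finite. Then there is a cofinite set of closed points $s \in S$ such that $\cE_s$ is \'etale trivializable, and by Proposition \ref{La-St} for each such $s$ we have $(F_{\cX_s}^{n(s)})^* \cE_s \simeq \cE_s$ for some positive integer $n(s)$. The central step would be to promote this pointwise Frobenius periodicity to a \emph{uniform} bound, namely that $n(s)$ may be chosen from a finite set of integers $\{n_1,\dots,n_k\}$ independent of $s$. The mechanism I have in mind exploits boundedness of the family of slope semistable sheaves on the fibers $\cX_s$ with trivial numerical invariants: inside such a bounded moduli space, the Frobenius orbit of $\cE_s$ is a finite cycle, and one would like to argue, using constructibility of the locus of bundles isomorphic to their $n$-th Frobenius pullback for each fixed $n$, that for $s$ varying in a dense subset of $S$ the length of this cycle cannot grow without bound.

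Once such a uniform $N$ is in hand, every trivializing \'etale cover of $\cE_s$ can be chosen of degree controlled in terms of $r$ and $p^N$, so that the arithmetic representation $\pi_1^{et}(\cX_s) \to \GL_r(\overline{\FF}_{p(s)})$ factors through a finite quotient whose order is uniformly bounded. Using the specialization homomorphism from the profinite completion of $\pi_1(X,x)$ to $\pi_1^{et}(\cX_s)$ (whose image is dense for $s$ in an open subset of $S$), these compatible finite quotients should assemble into a single finite quotient of $\pi_1(X,x)$ through which $\rho$ factors, contradicting the infiniteness of its image.

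The main obstacle is unquestionably the uniform boundedness of the Frobenius period $n(s)$. Extracting a global bound from pointwise periodicity is, in my view, close in difficulty to the Grothendieck--Katz $p$-curvature conjecture itself, and one should not expect it to fall out of standard Hilbert-scheme boundedness alone; it may well require input in the spirit of Andr\'e's $G$-function techniques, or a genuinely new arithmetic rigidity statement for families of Frobenius-periodic bundles. The plan above is therefore only a framework, with the hard work concentrated in this single step, which is precisely the reason the statement is formulated as a conjecture rather than a theorem.
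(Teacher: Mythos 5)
This statement is posed in the paper as a conjecture and no proof of it is given there, so the only question is whether your argument actually closes it; it does not, and beyond the gap you yourself flag there is an earlier step that is genuinely wrong. Your opening reduction --- replacing $E$ by the associated graded of its Jordan--H\"older filtration and then assuming $E$ is polystable with infinite unitary monodromy --- does not preserve the hypothesis. The implication you need is ``$E$ not \'etale trivializable $\Rightarrow$ $\gr E$ not \'etale trivializable,'' and this fails: Example \ref{ordinary} of the paper exhibits a numerically flat bundle $E$ that is an iterated extension of trivial bundles (so $\gr E \simeq \cO_X^{\oplus r}$ is trivially \'etale trivializable) and yet is itself not \'etale trivializable. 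For that $E$ the monodromy is unipotent, not an infinite unitary group, and the conjecture becomes the statement that the Frobenius acts non-bijectively on $H^1(\cO_{\cX_s})$ for infinitely many $s$ --- an Elkies-type supersingularity statement that is invisible after your reduction. So your framework, even if completed, would only address the semisimple part of the monodromy and would say nothing about precisely the class of examples the paper identifies as the heart of the conjecture.

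As for the step you do acknowledge as the obstacle: promoting the pointwise Frobenius periods $n(s)$ from Proposition \ref{La-St} to a uniform bound is not a technical lemma awaiting a constructibility argument --- the paper itself observes (via Pink's theorem in the abelian-variety case) that Conjecture \ref{Conjecture-net} \emph{is equivalent} to the existence of such a uniform bound there. Constructibility of the loci $\{s : (F^n)^*\cE_s \simeq \cE_s\}$ for each fixed $n$ gives you countably many constructible sets covering a cofinite set of closed points, but over a base with countably many closed points this yields no contradiction and no finite subcover; this is exactly the same failure mode as in the Monsky--Brenner--Trivedi example of Section 5, where countably many proper closed loci exhaust all closed points. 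So your proposal is a restatement of the conjecture in a (partially incorrect) different language rather than a proof, which is consistent with the paper leaving it open.
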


In case of bundles described in Example \ref{ordinary}, the
conjecture can be reformulated as saying that for a given smooth
complex projective variety $X$ with $h^1(X,\cO_X)>0$, there are
infinitely many points $s\in S$ for which the nilpotent part of
the Frobenius action on $H^1(\cO_{\cX_s})$ is non-trivial. In
particular, if $X$ is a complex elliptic curve then this is
equivalent to saying that there are infinitely many primes for
which the reduction of $X$ is supersingular. In case of elliptic
curves defined over $\QQ$ (and also in some other cases) this is a
celebrated Elkies' result \cite{El}.

\medskip

\begin{Example}
Let $A$ be an abelian variety over a number field $K$ and let
$\cL$ be a line bundle on some model $\cA\to S=\Spec R$ of $A$ for
a finitely generated subring $R\subset K$. Note that by Theorem
\ref{La-St} a line bundle $L$ on a smooth projective variety over
$\bar \FF _p$ is \'etale trivializable if and only if there exists
some $n\in \NN$ such that $(F^n)^*L\simeq L$. Therefore Conjecture
\ref{Conjecture-net} predicts that in the above case if for almost
all closed points $s\in S$ there exists $n_s\in \NN$ such that
$(F^{n_s})^*\cL_s\simeq \cL_s$ then $\cL _K$ is \'etale
trivializable on $A$.

In this case a slightly weaker result is known. Namely, assume
that there exists some $n\in \NN$ such that for almost all closed
points $s\in S$ we have $(F^n)^*\cL_s\simeq \cL_s$ (so $n_s$ in
the above reformulation is independent of $s$). Then $\cL_K$ is
\'etale trivializable on $A$. This is just a dual version of
\cite[Theorem 5.3]{Pi} and it implies that Conjecture
\ref{Conjecture-net} reduces to existence of a uniform bound on
all $n_s$.

Note that if $\cL_K$ is \'etale trivializable then there exists a
positive integer $m$ such that $\cL_s ^m\simeq \cO _{\cX _s}$ for
all $s$ from some non-empty open subset $U\subset S$. Since for
every (rational) prime $p$ not dividing $m$ the number $p^{m!}-1$
is divisible by $m$ we see that $(F^{m!})^*\cL_s\simeq \cL_s$ for
all closed points $s$ from some smaller non-empty open subset
$V\subset U$. This provides us with the converse to Pink's
theorem.
\end{Example}

\medskip

Using the same methods as in proof of \cite[Th\'eor\`eme
7.2.2]{An} and \cite[Theorem 5.1]{EL} one can show that an
analogue of Conjecture \ref{Conjecture-net} holds in case of equal
characteristic zero:

\begin{Theorem}
Let $f: \cX\to S$ be a smooth projective morphism of varieties
defined over an algebraically closed field $k$ of characteristic
$0$. Let $\bar \eta$ be the generic geometric point of $S$ and let
$\cE$ be a locally free sheaf on $\cX$. Let us assume that there
exists a dense subset $U\subset S(k)$ such that for every $s$ in
$U$ the bundle $\cE_s$ is \'etale trivializable. Then we have the
following:
\begin{itemize}
\item[1)]  There exists a finite Galois \'etale covering $\pi
: Y\to \cX_{\bar \eta}$  such that $\pi ^*\cE_{\bar \eta}$ is a
direct sum of line bundles.
\item[2)] If $U$ is open in $S(k)$ then $\cE_{\bar \eta}$ is \'etale
trivializable.
\end{itemize}
\end{Theorem}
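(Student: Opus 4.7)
The plan rests on the smooth specialization theorem for \'etale fundamental groups: since $f:\cX\to S$ is smooth and proper with $\chr k=0$, the specialization maps $\pi_1^{\et}(\cX_s,\bar x_s)\to \pi_1^{\et}(\cX_{\bar\eta},\bar x)$ are isomorphisms (SGA~1, Exp.~X, Cor.~2.3). I would identify all of these profinite fundamental groups with a single topologically finitely generated group $\Pi$. Under this identification, a rank-$r$ locally free sheaf on any geometric fibre is \'etale trivializable if and only if it corresponds, up to conjugation, to a continuous representation $\Pi\to \GL_r(k)$ with finite image; for $s\in U$ I denote this representation by $\rho_s$, with finite image $G_s\subset \GL_r(k)$.

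For part~2), pick $s_0\in U$ and a finite Galois \'etale cover $\pi_0:Y_0\to \cX_{s_0}$ trivializing $\cE_{s_0}$; the specialization isomorphism makes $\pi_0$ spread out to a finite Galois \'etale cover $\ti\pi:\ti Y\to \cX_V$ over a Zariski open $V\subset S$ containing $s_0$. The direct image $\ti\pi_*\ti\pi^*\cE=\cE\otimes \ti\pi_*\cO_{\ti Y}$ is flat over $S$, so cohomology and base change apply. Upper semicontinuity and the equality $h^0(\ti Y_{s_0},\ti\pi_0^*\cE_{s_0})=r$ give the same value on a constructible neighborhood of $s_0$; openness of $U$ allows us to restrict to a non-empty Zariski open $W\subset V\cap U$ on which the $r$ sections extend globally and restrict isomorphically to fibres. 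They furnish a generating map $\cO_{\ti Y_W}^{\,r}\to \ti\pi^*\cE|_{\ti Y_W}$ which on each fibre is a morphism of numerically flat rank-$r$ bundles inducing an isomorphism on global sections—hence an isomorphism. Restricting to the generic geometric fibre of $W\to S$ yields $\pi^*\cE_{\bar\eta}\cong \cO_{\ti Y_{\bar\eta}}^{\,r}$ for $\pi=\ti\pi_{\bar\eta}$.

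For part~1) the degrees of the trivializing covers are a priori unbounded, and the new ingredient is Jordan's theorem: every finite subgroup of $\GL_r(k)$ in characteristic zero contains a normal abelian subgroup of index at most a universal constant $J(r)$. For each $s\in U$, replacing $Y_s\to \cX_s$ by the intermediate quotient corresponding to an abelian normal subgroup $A_s\subset G_s$ of index $\le J(r)$ produces a Galois \'etale cover $\pi_s':Y_s'\to \cX_s$ of degree $\le J(r)$ with abelian monodromy; consequently $\pi_s'{}^*\cE_s$ decomposes as a direct sum of (torsion) line bundles. Because $\Pi$ is topologically finitely generated, there are only finitely many isomorphism classes of finite Galois \'etale covers of $\cX_{\bar\eta}$ of degree $\le J(r)$, say $Y^{(1)},\dots,Y^{(N)}$; each spreads out to a cover $\ti\pi^{(j)}:\ti Y^{(j)}\to \cX_{V_j}$ over some Zariski open $V_j\subset S$. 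A pigeonhole argument produces an index $j_0$ for which the set of $s\in V_{j_0}\cap U$ with $\pi_s'\cong \ti\pi^{(j_0)}_s$ is Zariski dense in $S$.

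The main technical obstacle is the concluding step: one must show that the set $T\subset V_{j_0}$ of points $s$ for which $(\ti\pi^{(j_0)}_s)^*\cE_s$ splits as a direct sum of line bundles is \emph{constructible}. Granting this, $T$ contains a Zariski-dense subset of $V_{j_0}$, hence a non-empty Zariski open, and in particular the generic geometric fibre, yielding the decomposition $(\ti\pi^{(j_0)}_{\bar\eta})^*\cE_{\bar\eta}\cong \bigoplus_{i=1}^r L_i$. The constructibility can be extracted from an analysis of the relative endomorphism algebra $(\ti\pi^{(j_0)})_* \End \bigl((\ti\pi^{(j_0)})^*\cE\bigr)$ together with the relative Harder--Narasimhan/Jordan--H\"older stratification of the family of numerically flat rank-$r$ bundles, following the methods of \cite[Th\'eor\`eme~7.2.2]{An} and \cite[Theorem~5.1]{EL}; this is where the technical heart of the proof lies.
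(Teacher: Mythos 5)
The paper itself gives no proof of this theorem --- it only states that one can argue ``using the same methods as in'' [An, Th\'eor\`eme 7.2.2] and [EL, Theorem 5.1] --- so your proposal can only be judged against those methods and on internal correctness. For part 1) your skeleton (specialization isomorphism for the \'etale fundamental group, Jordan's theorem, topological finite generation giving only finitely many Galois covers of degree $\le J(r)$, pigeonhole to fix a single cover over a Zariski-dense set of fibres) is indeed the right frame and matches the cited references. But, as you admit, everything then hinges on propagating the splitting into line bundles from a dense set of closed fibres to the generic fibre, and you defer exactly that step to [An] and [EL]. That step \emph{is} the content of the theorem --- it is precisely what fails over $\bar\FF_2$ in Laszlo's example --- so the proposal reduces the statement to itself rather than proving it. (And the constructibility of the locus where a numerically flat bundle splits into line bundles is itself delicate: one has to work with saturated degree-zero line subbundles via a relative Quot scheme, or argue on the monodromy side; naming ``the relative Harder--Narasimhan stratification'' is not yet an argument.)

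Part 2) contains an actual error, not merely a gap. Upper semicontinuity of $s\mapsto h^0(\ti Y_s,\ti\pi_s^*\cE_s)$ gives $h^0\le r$ on a neighbourhood of $s_0$, not $h^0=r$: the value may drop away from $s_0$, so the $r$ trivializing sections at $s_0$ need not extend, and the evaluation map on nearby fibres need not be an isomorphism. Worse, nothing in your part 2) argument is specific to characteristic zero: spreading a single trivializing cover out from one fibre and invoking semicontinuity works verbatim over $\bar\FF_p$, where the conclusion is false by the example of Laszlo quoted in the paper immediately after the theorem (every closed fibre \'etale trivializable, the generic fibre not). The characteristic-zero input must enter here. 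The natural repair is to deduce 2) from 1): once $\pi^*\cE_{\bar\eta}\cong\bigoplus_i L_i$ with each $L_i$ numerically trivial, openness of $U$ forces each summand $\cL_{i,s}$ to be a torsion point of $\Pic^{\tau}(\ti Y_s)$ for all closed $s$ in a non-empty open subset, and one must then show that a section of the relative $\Pic^{\tau}$ (an extension of a finite group scheme by an abelian scheme) which is torsion at every closed point of a non-empty open set is a torsion section. That is again a genuine statement requiring proof --- a Baire-type argument over uncountable $k$, and more arithmetic input over $\overline\QQ$ --- and it is exactly the kind of question about torsion of sections that [An] is designed to handle.
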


Note that, similarly as in other cases, an analogue of this
theorem is false for families defined over an algebraic closure of
a finite filed:

\begin{Example}
In \cite[Corollary 4.3]{EL} the authors used Laszlo's example
\cite[Section 3]{Ls}) to construct a locally free sheaf $\cE$ on
$\cX=X\times _k S\to S$, where $X$ is a smooth projective curve,
$S$ is a smooth curve, both defined over $k=\bar \FF _2$ and such
that for every closed point $s\in S$ the bundle $\cE_s$ is \'etale
trivializable but $\cE_{\bar \eta}$ is not \'etale trivializable
for the generic geometric point ${\bar \eta}$ of $S$.
\end{Example}

\medskip
The above example can occur only because the monodromy groups of
$\cE_s$ have orders divisible by the characteristic of $k(s)$. For
positive results in other cases see \cite[Theorem 5.1]{EL}.

\bigskip

\emph{\large \bf Acknowledgements.}

The author would like to thank D. R\"ossler and H. Esnault for
useful conversations related to Section
\ref{Section:arithmetic-of-flat-bundles}.
Author's work was partially supported by Polish National
Science Centre (NCN) contract number 2012/07/B/ST1/03343.

\end{document}